\newcommand{\C}{{\mathbb{C}}}
\newcommand{\R}{{\mathbb{R}}}
\let\Im=\undefined\DeclareMathOperator*{\Im}{Im}
\newcommand{\rad}{\text{rad}}
\newtheorem{theorem}{Theorem}[section]
\newtheorem{lemma}[theorem]{Lemma}
\newtheorem{corollary}[theorem]{Corollary}
\newtheorem{conjecture}[theorem]{Conjecture}
\newtheorem{proposition}[theorem]{Proposition}
\theoremstyle{definition}
\newtheorem{definition}[theorem]{Definition}
\theoremstyle{remark}
\newtheorem*{remark}{Remark}
\newtheorem*{remarks}{Remarks}
\newcounter{smalllist}
\newenvironment{SL}{\begin{list}{{\rm(\roman{smalllist})\hss}}{%
\setlength{\topsep}{0mm}\setlength{\parsep}{0mm}\setlength{\itemsep}{0mm}%
\setlength{\labelwidth}{2.0em}\setlength{\itemindent}{2.5em}\setlength{\leftmargin}{0em}\usecounter{smalllist}%
}}{\end{list}}
\newenvironment{CI}{\begin{list}{{\ $\bullet$\ }}{%
\setlength{\topsep}{0mm}\setlength{\parsep}{0mm}\setlength{\itemsep}{0mm}%
\setlength{\labelwidth}{0mm}\setlength{\itemindent}{-1.5em}\setlength{\leftmargin}{1.5em}%
\setlength{\labelsep}{0mm} }}{\end{list}}
\newcommand{\eps}{{\varepsilon}}
\begin{document}

\title[The mass-critical NLS with radial data]
{The mass-critical nonlinear Schr\"odinger equation with radial data in dimensions three and higher}
\author{Rowan Killip}
\address{University of California, Los Angeles}
\author{Monica Visan}
\address{Institute for Advanced Study}
\author{Xiaoyi Zhang}
\address{Academy of Mathematics and System Sciences, Chinese Academy of Sciences}
\subjclass[2000]{35Q55}

\begin{abstract}
We establish global well-posedness and scattering for solutions to the mass-critical nonlinear Schr\"odinger equation $iu_t +
\Delta u = \pm |u|^{4/d} u$ for large spherically symmetric $L^2_x(\R^d)$ initial data in dimensions $d\geq 3$.
In the focusing case we require that the mass is strictly less than that of the ground state.
As a consequence, we obtain that in the focusing case, any spherically symmetric blowup solution must concentrate
at least the mass of the ground state at the blowup time.
\end{abstract}

\maketitle


\section{Introduction}


The $d$-dimensional mass-critical nonlinear Schr\"odinger equation is given by
\begin{equation}\label{nls}
i u_t + \Delta u = F(u) \quad \text{with } F(u) := \mu |u|^{\frac4d} u
\end{equation}
where $u$ is a complex-valued function of spacetime $\R\times\R^d$.  Here $\mu=\pm1$, with $\mu=1$ known as the
defocusing equation and $\mu=-1$ as the focusing equation.

The name `mass-critical' refers to the fact that the scaling symmetry
\begin{equation}\label{scaling}
u(t,x) \mapsto  u_\lambda(t,x):= \lambda^{-\frac d2} u( \lambda^{-2} t, \lambda^{-1} x)
\end{equation}
leaves both the equation and the mass invariant.  The mass of a solution is
\begin{equation}\label{massdef}
M(u(t)) := \int_{\R^d} |u(t,x)|^2\,dx
\end{equation}
and is conserved under the flow.

In this paper, we investigate the Cauchy problem for \eqref{nls} for spherically symmetric $L^2_x(\R^d)$ initial data in dimensions
$d\geq 3$ by adapting the recent argument from \cite{KTV}, which treated the case $d=2$.  Before describing our results, we need to
review some background material.  We begin by making the notion of a solution more precise:

\begin{definition}[Solution] A function $u: I \times \R^d \to \C$ on a non-empty time interval $I \subset \R$
is a \emph{solution} (more precisely, a strong $L^2_x(\R^d)$ solution) to \eqref{nls} if
it lies in the class $C^0_t L^2_x(K \times \R^d) \cap L^{2(d+2)/d}_{t,x}(K \times \R^d)$ for all compact $K \subset I$, and
obeys the Duhamel formula
\begin{align}\label{old duhamel}
u(t_1) = e^{i(t_1-t_0)\Delta} u(t_0) - i \int_{t_0}^{t_1} e^{i(t_1-t)\Delta} F(u(t))\ dt
\end{align}
for all $t_0, t_1 \in I$.  Note that by Lemma~\ref{L:strichartz} below, the condition $u\in L^{2(d+2)/d}_{t,x}$ locally in time
guarantees that the integral converges, at least in a weak-$L_x^2$ sense.
\end{definition}

\begin{remark}
The condition that $u$ is in $L^{2(d+2)/d}_{t,x}$ locally in time is natural.  This space appears in the
Strichartz inequality (Lemma~\ref{L:strichartz}); consequently, all solutions to the linear problem lie in this space.  Existence of
solutions to \eqref{nls} in this space is guaranteed by the local theory discussed below; it is also necessary in order to
ensure uniqueness of solutions in this local theory. Solutions to \eqref{nls} in this class have been intensively studied, see
for example \cite{begout, bourg.2d, ck, cwI, caz, keraani, mv, tao-lens, compact, tvz-higher, tsutsumi}.
\end{remark}

Associated to this notion of solution is a corresponding notion of blowup.  As we will see in Theorem~\ref{local} below,
this precisely corresponds to the impossibility of continuing the solution.

\begin{definition}[Blowup]\label{D:blowup}
We say that a solution $u$ to \eqref{nls} \emph{blows up forward in time} if there exists a time $t_0 \in I$ such that
$$ \int_{t_0}^{\sup I} \int_{\R^d} |u(t,x)|^{2(d+2)/d}\, dx \,dt = \infty$$
and that $u$ \emph{blows up backward in time} if there exists a time $t_0 \in I$ such that
$$ \int_{\inf I}^{ t_0} \int_{\R^d} |u(t,x)|^{2(d+2)/d}\, dx\, dt = \infty.$$
\end{definition}

The local theory for \eqref{nls} was worked out by Cazenave and Weissler \cite{cwI}.  They constructed local-in-time solutions
for arbitrary initial data in $L^2_x(\R^d)$; however, due to the critical nature of the equation, the resulting time of
existence depends on the profile of the initial data and not merely on its $L_x^2$-norm. Cazenave and Weissler also constructed
global solutions for small initial data.  We summarize their results in the theorem below.

\begin{theorem}[Local well-posedness, \cite{cwI, caz}]\label{local}
Given $u_0 \in L^2_x(\R^d)$ and $t_0 \in \R$, there exists a unique maximal-lifespan solution $u$ to \eqref{nls} with
$u(t_0)=u_0$. We will write $I$ for the maximal lifespan.  This solution also has the following properties:
\begin{CI}
\item (Local existence) $I$ is an open neighbourhood of $t_0$.
\item (Mass conservation) The solution $u$ obeys mass conservation: $M(u(t))=M(u_0)$ for all $t\in I$.
\item (Blowup criterion) If $\sup(I)$ or $\inf(I)$ is finite, then $u$ blows up in the corresponding time direction.
\item (Continuous dependence) The map that takes initial data to the corresponding strong solution is uniformly continuous
on compact time intervals for bounded sets of initial data.
\item (Scattering) If $\sup(I)=+\infty$ and $u$ does not blow up forward in time, then $u$ scatters forward in time, that is,
there exists a unique $u_+ \in L^2_x(\R^d)$ such that $$ \lim_{t \to +\infty} \| u(t)-e^{it\Delta} u_+ \|_{L^2_x(\R^d)} = 0.$$
Similarly, if $\inf(I)=-\infty$ and $u$ does not blow up backward in time, then $u$ scatters backward in time, that is, there is
a unique $u_- \in L^2_x(\R^d)$ so that
$$ \lim_{t \to -\infty} \| u(t)-e^{it\Delta} u_- \|_{L^2_x(\R^d)} = 0.$$
\item (Small data global existence) If $M(u_0)$ is sufficiently small depending on $d$, then $u$ is a global solution
with finite $L^{2(d+2)/d}_{t,x}$ norm.
\end{CI}
\end{theorem}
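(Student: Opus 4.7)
The plan is a Picard iteration in Strichartz spaces, following the original Cazenave--Weissler approach. For local existence, introduce the Duhamel map
\[
\Phi(u)(t) := e^{i(t-t_0)\Delta} u_0 - i\int_{t_0}^t e^{i(t-s)\Delta} F(u(s))\, ds
\]
on the closed ball $B_{I,\eta} := \{u : \|u\|_{L^{2(d+2)/d}_{t,x}(I \times \R^d)} \le 2\eta\}$ inside $C^0_t L^2_x \cap L^{2(d+2)/d}_{t,x}(I \times \R^d)$. The Strichartz inequality (Lemma~\ref{L:strichartz}), combined with the Hölder identity $\|F(u)\|_{L^{2(d+2)/(d+4)}_{t,x}} = \|u\|_{L^{2(d+2)/d}_{t,x}}^{(d+4)/d}$, yields
\[
\|\Phi(u)\|_{L^{2(d+2)/d}_{t,x}} \le \|e^{i(t-t_0)\Delta} u_0\|_{L^{2(d+2)/d}_{t,x}(I\times\R^d)} + C\eta^{(d+4)/d}
\]
together with a matching Lipschitz estimate. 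Since the linear piece is finite on $\R$, dominated convergence lets one make it $\le \eta$ by shrinking $|I|$ around $t_0$, after which $\Phi$ contracts $B_{I,\eta}$ to a unique fixed point. For small-data global existence one has the global bound $\|e^{i(t-t_0)\Delta} u_0\|_{L^{2(d+2)/d}_{t,x}(\R\times\R^d)} \lesssim \|u_0\|_{L^2_x}$, so the same contraction runs on $I = \R$ once $\|u_0\|_{L^2_x}$ is small.

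Uniqueness on a common interval follows by slicing into subintervals on which the contraction applies, so the maximal lifespan $I$ may be defined as the union of all intervals supporting a solution with datum $u_0$ at $t_0$. The blowup criterion is then proved by contradiction: if $\sup I < \infty$ while $u \in L^{2(d+2)/d}_{t,x}$ up to $\sup I$, then Strichartz forces $u \in C^0_t L^2_x([t_0,\sup I])$, with a limit $u(\sup I^-)$, and the local theory restarted at $\sup I$ extends $u$ past it. Mass conservation is established first for smooth ($H^2_x$) solutions by pairing the equation with $\bar u$ and taking imaginary parts, then transferred to general $L^2_x$ solutions by density together with continuous dependence. Continuous dependence itself is a Strichartz-based stability argument: the fixed-point equations for two solutions with nearby data are compared on intervals small enough that the contraction fits, and the conclusion is propagated across any compact subinterval of $I$ by iterating in small steps.

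Finally, if $\sup I = +\infty$ and $\|u\|_{L^{2(d+2)/d}_{t,x}([t_0,\infty)\times\R^d)} < \infty$, the dual Strichartz estimate shows that $\int_{t_0}^\infty e^{-is\Delta} F(u(s))\, ds$ converges in $L^2_x$, so that
\[
u_+ := e^{-it_0\Delta} u_0 - i \int_{t_0}^\infty e^{-is\Delta} F(u(s))\, ds
\]
satisfies $\|u(t) - e^{it\Delta} u_+\|_{L^2_x} \to 0$ as $t \to +\infty$; the backward scattering statement is symmetric.

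The main technical subtlety is criticality. Because the nonlinearity scales exactly as $L^{2(d+2)/d}_{t,x}$, shrinking the time interval does not a priori produce any smallness in the Duhamel term; the required smallness has to come from the linear evolution via the absolute continuity of $\|e^{i(t-t_0)\Delta} u_0\|_{L^{2(d+2)/d}_{t,x}(I\times\R^d)}$ in $|I|$. This is a non-quantitative, profile-dependent fact, which is precisely why the local time of existence depends on the profile of $u_0$ rather than on $\|u_0\|_{L^2_x}$ alone, and why global existence for general data cannot be deduced from the small-data theory merely by shortening intervals.
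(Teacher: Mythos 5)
Your proposal is correct and follows exactly the standard Cazenave--Weissler contraction-mapping scheme (Strichartz-based Picard iteration, smallness of the linear flow on short intervals via absolute continuity, small-data global theory, extension argument for the blowup criterion, mass conservation by approximation, stability for continuous dependence, and Cauchy-in-$L^2_x$ Duhamel tails for scattering). The paper itself offers no proof of this theorem, citing \cite{cwI, caz} instead, and those references argue precisely as you do, so there is nothing further to reconcile.
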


It is widely believed that in the defocusing case, all $L^2_x$ initial data lead to a global solution with finite $L^{2(d+2)/d}_{t,x}$
spacetime norm (and hence also scattering).

In the focusing case, the general consensus is more subtle.  Let $Q$ denote the \emph{ground state},
that is, the unique positive radial solution to
\begin{equation}\label{ground}
\Delta Q + Q^{1+4/d} = Q.
\end{equation}
(The existence and uniqueness of $Q$ was established in \cite{blions} and \cite{kwong} respectively.)
Then $u(t,x) := e^{it} Q(x)$ is a solution to \eqref{nls}, which is global but blows up both forward and backward in time
(in the sense of Definition~\ref{D:blowup}).  More dramatically, by applying the pseudoconformal transformation to $u$, we obtain a solution
\begin{equation}\label{pc}
v(t,x) := |t|^{-d/2} e^{i\frac{|x|^2-4}{4t}} Q\Bigl( \frac{x}{t} \Bigr)
\end{equation}
with the same mass that blows up in finite time.  It is widely believed that this ground state example is the minimal-mass obstruction
to global well-posedness and scattering in the focusing case.

To summarize, we subscribe to

\begin{conjecture}[Global existence and scattering]\label{conj}
Let $d \geq 1$ and $\mu=\pm 1$.  In the defocusing case $\mu=+1$, all maximal-lifespan solutions to \eqref{nls}
are global and do not blow up either forward or backward in time.  In the focusing case $\mu=-1$, all maximal-lifespan
solutions $u$ to \eqref{nls} with $M(u) < M(Q)$ are global and do not blow up either forward or backward in time.
\end{conjecture}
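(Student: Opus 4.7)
The plan is to proceed by contradiction using the concentration-compactness / minimal-counterexample framework of Kenig and Merle, combined with frequency-localized interaction Morawetz estimates in the spirit of Tao-Visan-Zhang and Dodson. Let $M_*$ denote the supremum of masses $m$ such that every maximal-lifespan solution with $M(u) \le m$ (and $m < M(Q)$ in the focusing case) is global with finite $L^{2(d+2)/d}_{t,x}$ spacetime norm; Theorem~\ref{local} gives $M_*>0$, and we assume for contradiction that $M_*<\infty$ in the defocusing case, or $M_*<M(Q)$ in the focusing case.

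The first major step is to extract a minimal counterexample. Applying a linear profile decomposition for $e^{it\Delta}$ in $L^2_x(\R^d)$ adapted to the full mass-critical symmetry group (translations in space and time, Galilean boosts, and $L^2$-critical scaling), together with the stability theory coming from Strichartz estimates and the local theory, one produces an almost periodic blowup solution $u_\infty$ with $M(u_\infty)=M_*$: there exist functions $N(t)>0$, $x(t)\in\R^d$, $\xi(t)\in\R^d$ such that the orbit
\[
\bigl\{ N(t)^{-d/2} e^{-ix\cdot\xi(t)} u_\infty\bigl(t, N(t)^{-1}x + x(t)\bigr) : t\in I \bigr\}
\]
is precompact in $L^2_x(\R^d)$. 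Following Keraani and Tao-Visan-Zhang, a further argument narrows the possibilities for $u_\infty$ to three enemies distinguished by the behaviour of $N(t)$: a soliton-like solution ($N(t)\equiv 1$), a low-to-high frequency cascade, and a self-similar solution ($N(t)\sim t^{-1/2}$).

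Each enemy must then be excluded. The self-similar and cascade solutions are typically dispatched by bootstrapping almost periodicity together with the Duhamel formula (propagated from $\pm\infty$) to produce $\dot H^s_x$ regularity for some $s>0$, after which a Morawetz-type estimate or the reduced Duhamel identity yields a contradiction. The soliton scenario is the crux: one must rule out a nondispersing lump of mass traveling along the trajectory $(x(t),\xi(t))$. This requires a frequency-localized interaction Morawetz inequality; one first performs a Galilean adjustment to eliminate the bulk momentum, truncates in frequency to harvest the regularity gained above, and then applies the interaction Morawetz identity to the truncated solution, carefully controlling the low-frequency errors generated by the nonlinearity and keeping the compatibility parameters $x(t)$, $\xi(t)$ under modulation control.

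The principal obstacle is the soliton enemy without a radial assumption, where $x(t)$ and $\xi(t)$ are genuine unknowns rather than symmetry artifacts. In the radial case one may enforce $x(t)\equiv 0$ and $\xi(t)\equiv 0$ by symmetry, reducing matters to a classical Lin-Strauss Morawetz or virial argument centred at the origin; in general one must implement the full Dodson-type interaction Morawetz machinery, including refined bounds that prevent $|\xi(t)|$ from growing too rapidly and that rule out the escape of mass to infinite velocity. The focusing regime carries the additional subtlety that the sharp Gagliardo-Nirenberg inequality must be invoked so that the hypothesis $M(u_\infty)<M(Q)$ converts into coercive control of $\|\nabla u_\infty\|_{L^2_x}$ once $H^1_x$ regularity has been established, which is what permits any Morawetz-based exclusion to be closed.
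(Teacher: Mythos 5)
The statement you are asked to prove is labelled a \emph{conjecture} in the paper, and the paper does not prove it: it establishes only the spherically symmetric case in dimensions $d\geq 3$ (Theorem~\ref{main}), precisely because the general case was out of reach. Your proposal correctly names the concentration-compactness framework (Theorems~\ref{main-compact} and~\ref{comp}, the regularity bootstrap via the Duhamel formulae of Lemma~\ref{duhamel L}, and a virial/Morawetz argument for the soliton), and in the radial case your outline matches what the paper actually does. But as a proof of the full conjecture it has a genuine gap at exactly the point you flag as ``the principal obstacle'': the non-radial soliton-like enemy, where $x(t)$ and $\xi(t)$ are nontrivial. Saying that one ``implements the full Dodson-type interaction Morawetz machinery, including refined bounds that prevent $|\xi(t)|$ from growing too rapidly'' is a statement of the problem, not a solution. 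The frequency-localized interaction Morawetz inequality does not close on its own: the low-frequency errors it generates are only controllable via long-time Strichartz estimates for almost periodic solutions (uniform spacetime bounds on $P_{\leq N}u$ over intervals where $\int N(t)^3\,dt \sim N^3$, or the mass-critical analogue), and nothing in your outline supplies these or explains how to prove them. Likewise, the claim that $|\xi(t)|$ can be kept under ``modulation control'' requires a quantitative argument (in the focusing case one must also rule out $\int N(t)^{-1}|\xi'(t)|\,dt$ growing); none is sketched. This is the content of Dodson's later work and cannot be waved through.

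Two smaller points. First, the enemy taxonomy you cite does not match the paper's: Theorem~\ref{comp} isolates a \emph{double high-to-low} frequency cascade ($\liminf_{t\to\pm\infty}N(t)=0$ with $\sup_t N(t)<\infty$), not a low-to-high cascade; the distinction matters because the paper kills this enemy by showing $\|\nabla u(t_n)\|_{L^2}\to 0$ along $N(t_n)\to 0$ and contradicting conservation of the (coercive) energy, an argument specific to $N(t)\to 0$. Second, even granting the reduction to the three enemies, the regularity step (Section~\ref{glob-sob-sec}) in dimensions $d\geq 3$ is itself delicate because the nonlinearity $|u|^{4/d}u$ is only $C^{1+4/d}$; the paper needs the fractional chain rules (Lemmas~\ref{F Lip} and~\ref{fdfp}), the in/out decomposition of Section~\ref{S:in/out}, and the Gronwall-type Lemma~\ref{Gronwall} to push the regularity to $s>1$. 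Your proposal asserts ``$\dot H^s_x$ regularity for some $s>0$,'' which is not enough for the energy and virial arguments; one needs $s>1$ (or at least $s=1$ for the self-similar enemy), and obtaining it is the bulk of the technical work.
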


\begin{remark}
While this conjecture is phrased for $L^2_x(\R^d)$ solutions, it is equivalent to a scattering claim for smooth solutions;
see \cite{begout, carles, keraani, tao-lens}.  In \cite{blue,tao-lens}, it is also shown
that the global existence and the scattering claims are equivalent in the $L^2_x(\R^d)$ category.
\end{remark}

The contribution of this paper toward settling this conjecture is

\begin{theorem}\label{main}
Let $d\geq 3$.  Then Conjecture~\ref{conj} is true in the class of spherically symmetric initial data (for either choice of sign $\mu$).
\end{theorem}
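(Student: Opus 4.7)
The plan is to adapt the concentration-compactness/rigidity strategy used by \cite{KTV} in dimension $d=2$. Arguing by contradiction, suppose the conjecture fails in the radial class and let $M_c$ be the infimum of masses at which a radial maximal-lifespan solution can fail to be global or to have finite $L^{2(d+2)/d}_{t,x}$ norm; by Theorem~\ref{local}, $M_c>0$, and in the focusing case $M_c\le M(Q)$. Combining a Keraani-type linear profile decomposition adapted to radial data with the stability theory for \eqref{nls}, one extracts a critical radial solution $u$ of mass exactly $M_c$ that fails to scatter. Because the only non-compact symmetries preserving radiality (beyond time translation) are the scaling \eqref{scaling}, this extraction yields almost periodicity modulo scaling: there is a function $N:I\to(0,\infty)$ such that
\[
\bigl\{\,N(t)^{-d/2}\,u\bigl(t,N(t)^{-1}x\bigr):t\in I\,\bigr\}
\]
is precompact in $L^2_x(\R^d)$.

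Following the template of \cite{KTV}, I would next reduce to three canonical scenarios according to the behavior of $N$: (i) a self-similar solution with $I=(0,\infty)$ and $N(t)=t^{-1/2}$; (ii) a global frequency cascade with $I=\R$, $N(t)\le 1$, and $\liminf_{|t|\to\infty}N(t)=0$; and (iii) a soliton-like solution with $I=\R$ and $N(t)\equiv 1$. This trichotomy is extracted from the possible behaviors of $N$ by a pigeonhole argument once almost periodicity is in hand.

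The technical heart is then to rule out each of the three scenarios, and the main obstacle is to establish additional regularity for all of them, namely $u\in L^\infty_t H^s_x(\R^d)$ for some $s>0$ and, in scenarios (i)--(ii), also $u\in L^\infty_t \dot H^{-\sigma}_x(\R^d)$ for some $\sigma>0$. For this I would upgrade the frequency-localized in/out decomposition and the radial-improved Strichartz estimates of \cite{KTV} to dimensions $d\ge 3$, and run a long-time Strichartz bootstrap (together with a double-Duhamel argument for the negative regularity) exploiting radial decay away from the origin to compensate for the absence of translational modulation. This is the step where most of the genuinely new work in $d\ge 3$ lies. Granted the additional regularity, each scenario is excluded in the standard way: (i) a self-similar solution lying in $H^1_x$ with finite mass must be identically zero; (ii) the cascade is incompatible with mass conservation combined with negative Sobolev decay of the rescaled orbit; and (iii) the soliton is eliminated by a truncated virial/Morawetz identity for $\int\chi(|x|/R)\,|u(t,x)|^2\,dx$, where in the focusing case the hypothesis $M(u)<M(Q)$ enters through the sharp Gagliardo--Nirenberg inequality to keep the linear kinetic term dominant and force a contradiction.
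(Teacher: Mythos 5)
Your outline follows the paper's architecture: reduce to radial solutions that are almost periodic modulo scaling and to the three scenarios---which the paper does not reprove but imports directly from \cite{compact} (Theorem~\ref{main-compact}) and \cite{KTV} (Theorem~\ref{comp})---and then exclude each scenario after establishing additional regularity via the in/out decomposition, the weighted and Shao radial Strichartz estimates, and a frequency-by-frequency bootstrap closed with the Gronwall-type Lemma~\ref{Gronwall}, with the fractional chain rules (Lemmas~\ref{F Lip} and~\ref{fdfp}) standing in for the Littlewood--Paley arithmetic available for polynomial nonlinearities. Two divergences are worth recording. First, your treatment of the cascade is genuinely different: you propose negative regularity $u\in L^\infty_t\dot H^{-\sigma}_x$ plus mass conservation (which does work once the $\dot H^{-\sigma}_x$ bound is in hand, since frequency localization at scale $N(t)\to0$ then sends the mass to zero), whereas the paper proves no negative regularity anywhere; it instead establishes $u\in L^\infty_t H^s_x$ for all $s<1+\frac4d$ (Theorem~\ref{glob-sob-thm}) and kills the cascade through conservation of energy and the sharp Gagliardo--Nirenberg inequality (Theorem~\ref{sGN}): $\|\nabla u(t)\|_2^2\sim_u E(u)$ is fixed, while $N(t_n)\to0$ forces $\|\nabla u(t_n)\|_2\to0$. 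Your route would require a separate double-Duhamel argument, so the paper's choice is the more economical one given that $s>1$ regularity is needed anyway. Second, your quantification ``$H^s_x$ for some $s>0$'' is too weak as stated: the self-similar exclusion needs $s=1$ (so that the $H^1_x$ global theory---Weinstein's theorem for $M<M(Q)$ in the focusing case---contradicts blowup at $t=0$), and the soliton exclusion needs $s>1$, because compactness holds only at the $L^2$ level, so the external kinetic and potential terms in the truncated virial identity are made small by interpolating the small mass in $\{|x|\gtrsim R\}$ against a uniform $H^s_x$ bound with $s>1$ (see \eqref{M3'} and \eqref{M4'}). Since the regularity machinery in fact delivers every $s<1+\frac4d$, this is an imprecision rather than an obstruction, but the rigidity steps do not close with an arbitrarily small amount of regularity.
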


Conjecture~\ref{conj} has been the focus of much intensive study and several partial results for various choices of $d, \mu$,
and sometimes with the additional assumption of spherical symmetry. The most compelling evidence in favour of this conjecture
stems from results obtained under the assumption that $u_0$ has additional regularity.  For the defocusing equation, it is easy
to prove global well-posedness for initial data in $H^1_x$; this follows from the usual contraction mapping argument combined with the
conservation of mass and energy; see, for example, \cite{caz}. Recall that the energy is given by
\begin{align}\label{energy}
E(u(t)):=\int_{\R^d}\frac 12 |\nabla u(t,x)|^2 +\mu \frac{d}{2(d+2)}|u(t,x)|^{\frac{2(d+2)}d} \, dx.
\end{align}
Note that for general $L_x^2$ initial data, the energy need not be finite.

The focusing equation with data in $H^1_x$ was treated by Weinstein \cite{weinstein}.  A key ingredient was his proof of the
sharp Gagliardo--Nirenberg inequality:

\begin{theorem}[Sharp Gagliardo--Nirenberg, \cite{weinstein}]\label{sGN}
\begin{equation}
\int_{\R^d} \bigl|f(x)\bigr|^{\frac{2(d+2)}{d}} \, dx   \leq
 \frac{d+2}{d} \biggl( \frac{\|f\|_{L^2}^2}{\|Q\|_{L^2}^2} \biggr)^{\frac{2}{d}} \int_{\R^d} \bigl|\nabla f(x)\bigr|^{2} \, dx.
\end{equation}
\end{theorem}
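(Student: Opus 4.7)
The plan is to obtain the inequality as a variational problem: minimize the scale-invariant Weinstein functional
$$W(f) := \frac{\|\nabla f\|_{L^2}^2 \, \|f\|_{L^2}^{4/d}}{\|f\|_{L^{2(d+2)/d}}^{2(d+2)/d}}$$
over $f\in H^1(\R^d)\setminus\{0\}$ and then identify the minimum with $W(Q)=\frac{d}{d+2}\|Q\|_{L^2}^{4/d}$. The claimed inequality is simply a rearrangement of $W(f)\ge W(Q)$. First I would note that $W$ is invariant under $f(x)\mapsto \alpha f(\beta x)$ for all $\alpha,\beta>0$, so a minimizing sequence may be normalized by $\|f_n\|_{L^2}=\|\nabla f_n\|_{L^2}=1$.

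To produce a minimizer I would apply symmetric decreasing rearrangement: by P\'olya--Szeg\H{o}, replacing $f_n$ by $|f_n|^*$ preserves $\|f_n\|_{L^2}$ and $\|f_n\|_{L^{2(d+2)/d}}$ while not increasing $\|\nabla f_n\|_{L^2}$, so we may assume each $f_n$ is nonnegative, radial, and radially decreasing. In this radial class one has the Strauss-type pointwise decay $f_n(r)\lesssim r^{-d/2}$ (with an $H^1$ bound), which rules out escape to infinity, while the $H^1$ bound prevents concentration and high-frequency blow-up. Passing to a subsequence, one then obtains weak convergence in $H^1$ together with strong convergence in $L^{2(d+2)/d}$ to a nonzero limit $g$, which by lower semicontinuity of the $H^1$ norm is a minimizer.

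Next I would derive the Euler--Lagrange equation. Since $g$ minimizes $W$ subject to the two normalization constraints, it satisfies $c_1(-\Delta g)+c_2 g=c_3 g^{1+4/d}$ for positive constants $c_1,c_2,c_3$. Applying the symmetry $g(x)\mapsto \alpha g(\beta x)$ for a suitable choice of $\alpha,\beta$ produces a function $\tilde g$ that solves $-\Delta \tilde g+\tilde g=\tilde g^{1+4/d}$; since $\tilde g$ is positive, radial and in $H^1$, Kwong's uniqueness theorem forces $\tilde g=Q$. Hence every minimizer is $Q$ up to the symmetries of $W$, and in particular $W(g)=W(Q)$.

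Finally, $W(Q)$ is computed purely from the ground state equation. Testing $-\Delta Q+Q=Q^{1+4/d}$ against $Q$ gives $\|\nabla Q\|_{L^2}^2+\|Q\|_{L^2}^2=\|Q\|_{L^{2(d+2)/d}}^{2(d+2)/d}$. The Pohozaev identity obtained by testing against $x\cdot\nabla Q$ and integrating by parts provides a second linear relation among the same three quantities. Solving the resulting $2\times 2$ system yields $\|\nabla Q\|_{L^2}^2=\tfrac d2\|Q\|_{L^2}^2$ and $\|Q\|_{L^{2(d+2)/d}}^{2(d+2)/d}=\tfrac{d+2}{2}\|Q\|_{L^2}^2$, whence $W(Q)=\tfrac{d}{d+2}\|Q\|_{L^2}^{4/d}$, giving precisely the stated constant. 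I expect the main obstacle to be the existence of a minimizer: it is here that one must carefully exploit radial symmetry to recover the compactness missing from the Sobolev embedding $H^1(\R^d)\hookrightarrow L^{2(d+2)/d}(\R^d)$, since the functional is invariant under a noncompact group of dilations that could otherwise cause the minimizing sequence to concentrate or spread.
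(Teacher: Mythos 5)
The paper does not prove this inequality at all; it simply cites Weinstein, and your argument is in essence Weinstein's original variational proof: minimize the scale-invariant quotient over radial decreasing $H^1$ functions (symmetrization plus compactness of the radial embedding), derive the Euler--Lagrange equation, rescale to the ground state equation \eqref{ground}, and evaluate the minimum via the Pohozaev-type identities $\|\nabla Q\|_{L^2}^2=\tfrac d2\|Q\|_{L^2}^2$ and $\|Q\|_{L^{2(d+2)/d}}^{2(d+2)/d}=\tfrac{d+2}{2}\|Q\|_{L^2}^2$, which you compute correctly. The proof is correct, and the only steps left implicit are routine: the nonnegative minimizer is strictly positive (elliptic regularity plus the strong maximum principle), which is what licenses the appeal to Kwong's uniqueness, and the pointwise decay $f(r)\lesssim r^{-d/2}$ for the symmetrized minimizing sequence already follows from monotonicity and the $L^2$ bound rather than from the $H^1$ Strauss lemma.
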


As noticed by Weinstein, this inequality implies that the energy \eqref{energy} is positive once $M(u_0)<M(Q)$; indeed, it gives
an upper bound on the $\dot H_x^1$-norm of the solution at all times of existence. Combining this with a contraction mapping
argument and the conservation of mass and energy, Weinstein proved global well-posedness for the focusing equation with initial
data in $H^1_x$ and mass smaller than that of the ground state.

Note that the iterative procedure used to obtain a global solution both for the defocusing and the focusing equations with
initial data in $H^1_x$ does not yield finite spacetime norms; in particular, scattering does not follow even for more regular
initial data.

In dimensions one and two, there has been much work \cite{bourg.2d, CGTz, CKSTT2, resonant, CRSW, PSST-1D, gf, tzirakis}
devoted to lowering the regularity of the initial data from $H_x^1$ toward $L^2_x(\R^d)$ and thus, toward establishing the conjecture.
For analogous results in higher dimensions, see \cite{PSST-hD, VZ-focusing}.

In the case of spherically symmetric solutions, Conjecture~\ref{conj} was recently settled in the high-dimensional defocusing case
$\mu=+1$, $d \geq 3$ in \cite{tvz-higher}; thus only the $\mu=-1$ case of Theorem~\ref{main} is new.  However, the techniques used
in \cite{tvz-higher} do not seem to be applicable to the focusing problem, primarily because the Morawetz inequality is no longer coercive in that case.
Instead, our argument is based on the recent preprint \cite{KTV}, which resolved the conjecture for $\mu=\pm1$, $d=2$, and spherically
symmetric data.  In turn, \cite{KTV} uses techniques developed to treat the analogous conjecture for the energy-critical problem,
such as \cite{borg:scatter, ckstt:gwp, RV, tao:gwp radial, thesis:art, Monica:thesis} and particularly~\cite{merlekenig}.  We will
give a more thorough discussion of the relation of the current work to these predecessors later, when we outline the argument.

\subsection{Mass concentration in the focusing problem}
Neither Theorem~\ref{main} nor Conjecture~\ref{conj} address the focusing problem for masses greater than or equal to that of
the ground state.  In this case, blowup solutions exist and attention has been focused on describing their properties.
For instance, finite-time blowup solutions with finite energy and mass equal to that of the ground state have been completely characterized by
Merle \cite{merle2}; they are precisely the ground state solution up to symmetries of the equation.

Several works have shown that finite-time blowup solutions must concentrate a positive amount of mass around the blowup time
$T^*$.  For finite energy data, see \cite{mt,nawa,weinstein-2} where it is shown that there exists $x(t)\in \R^d$ so that
$$
\liminf_{t\nearrow T^*}\int_{|x-x(t)|\leq R} \bigl|u(t,x)\bigr|^2 \,dx \geq M(Q)
$$
for any $R>0$.  For merely $L^2_x(\R^2)$ initial data, Bourgain \cite{bourg.2d} proved that some small amount of mass must concentrate
in parabolic windows (at least along a subsequence):
$$
\limsup_{t\nearrow T^*} \sup_{x_0\in\R^2} \int_{|x-x_0|\leq (T^*-t)^{1/2}} \bigl|u(t,x)\bigr|^2 \,dx \geq c,
$$
where $c$ is a small constant depending on the mass of $u$. This result was extended to other dimensions in \cite{begout,keraani}.

Combining Theorem~\ref{main} with the argument in \cite[\S10]{KTV}, one obtains the following concentration result.

\begin{corollary}[Blowup solutions concentrate the mass of the ground state]\label{cor:conc}\leavevmode\\
Let $d\geq 3$ and $\mu=-1$.  Let $u$ be a spherically symmetric solution to \eqref{nls} that blows up at time $0<T^*\leq \infty$.
If $T^*<\infty$, then there exists a sequence $t_n \nearrow T^*$ so that for any sequence $R_n\in (0,\infty)$ obeying $(T^*-t_n)^{-1/2} R_n\to \infty$,
\begin{align}\label{conc finite}
\limsup_{n\to \infty}\int_{|x|\leq R_n}|u(t_n,x)|^2\, dx\geq M(Q).
\end{align}
If $T^*=\infty$, then there exists a sequence $t_n \to\infty$ such that for any sequence $R_n\in (0,\infty)$ with $t_n^{-1/2}
R_n\to \infty$,
\begin{align}\label{conc infinite}
\limsup_{n\to \infty}\int_{|x|\leq R_n}|u(t_n,x)|^2\, dx\geq M(Q).
\end{align}
The analogous statement holds in the negative time direction.
\end{corollary}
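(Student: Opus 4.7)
The plan is to transplant the argument of \cite[Section~10]{KTV}, using Theorem~\ref{main} as the essential new input. I treat the case $T^*<\infty$ in detail; the case $T^*=\infty$ follows the same template with $\lambda_n=t_n^{1/2}$ in place of $(T^*-t_n)^{1/2}$, and the negative-time statement is obtained by applying the forward argument to $\overline{u(-t,x)}$.

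By the blowup criterion in Theorem~\ref{local}, $\|u\|_{L^{2(d+2)/d}_{t,x}([t,T^*)\times\R^d)}=\infty$ for every $t$ in the maximal lifespan. A standard pigeonhole then produces a sequence $t_n\nearrow T^*$ and scales $\lambda_n\sim (T^*-t_n)^{1/2}$ such that the rescaled radial solutions
\[
v_n(s,y) := \lambda_n^{d/2}\,u\bigl(t_n+\lambda_n^2 s,\,\lambda_n y\bigr)
\]
satisfy $\|v_n\|_{L^{2(d+2)/d}_{t,x}([-1,0]\times\R^d)}\geq \eta_0>0$ while still blowing up forward at some rescaled time $s_n^*\leq C$ independent of $n$. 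Since $M(v_n)=M(u)$, the sequence $\{v_n(0)\}$ is bounded in $L^2_x$, and one may apply the radial $L^2$ linear profile decomposition of Keraani (extended to $d\geq 3$ as in \cite{tvz-higher,KTV}): passing to a subsequence,
\[
v_n(0)=\sum_{j=1}^{J} g_n^j\phi^j+w_n^J,
\]
with radial profiles $\phi^j\in L^2_x$, asymptotically orthogonal symmetry parameters $g_n^j$ (scaling and time translation), asymptotic smallness $\|e^{is\Delta}w_n^J\|_{L^{2(d+2)/d}_{t,x}}\to 0$ as $J,n\to\infty$, and mass decoupling $\sum_j M(\phi^j)\leq M(u)$.

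Suppose, for contradiction, that $M(\phi^j)<M(Q)$ for every $j$. Theorem~\ref{main} then provides, for each $j$, a global nonlinear profile $U^j$ with finite $L^{2(d+2)/d}_{t,x}$ norm depending only on $M(\phi^j)$ (with the time-translation limits $t_n^j\to\pm\infty$ handled, as usual, by replacing $U^j$ with the solution scattering to the appropriate free state --- whose existence is guaranteed by the scattering assertion in Theorem~\ref{main}). The standard stability/perturbation theory for mass-critical NLS permits one to concatenate the $U^j$ into an approximate solution for $v_n$ on a window containing $[0,s_n^*]$ with uniformly bounded spacetime norm; the perturbation lemma then upgrades this to a true solution, contradicting blowup at $s_n^*$. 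Hence some profile $\phi^{j_0}$ has $M(\phi^{j_0})\geq M(Q)$. Spherical symmetry of $v_n$ forces its center to the origin (otherwise the radial orbit would furnish asymptotically disjoint copies, each of mass $\geq M(Q)$, violating the mass bound), and the profile's rescaled spatial scale is necessarily bounded, since a spreading profile would evolve essentially linearly on $[0,s_n^*]$ and could not drive blowup. Undoing the rescaling (and absorbing the profile's time shift into $t_n$ at cost $O(\lambda_n^2)$), at least $M(Q)$ units of mass are deposited in a ball of radius $O(\lambda_n)$ about the origin at time $t_n$; hence for any $R_n$ with $R_n/\lambda_n\to\infty$,
\[
\limsup_{n\to\infty}\int_{|x|\leq R_n}|u(t_n,x)|^2\,dx \;\geq\; M(\phi^{j_0})\;\geq\; M(Q).
\]

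The main technical obstacle is the concatenation/stability step: one must verify that nonlinear profiles with asymptotically orthogonal scales and time shifts interact negligibly in $L^{2(d+2)/d}_{t,x}$, and handle the various limiting regimes of the time-translation parameters. This machinery is by now standard and is already deployed in the proof of Theorem~\ref{main}, so it transplants to the present setting essentially verbatim from \cite{KTV}.
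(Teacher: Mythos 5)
Your overall strategy---rescaling about times $t_n$, applying the radial $L^2$ profile decomposition, and using Theorem~\ref{main} together with stability theory to force a profile of mass at least $M(Q)$---is indeed the route the paper intends, since the paper's proof consists of invoking the argument of \cite[\S10]{KTV}. However, as written your sketch has genuine gaps exactly at the points where that argument requires real work. Producing \emph{some} profile $\phi^{j_0}$ with $M(\phi^{j_0})\geq M(Q)$ is not enough: \eqref{conc finite} requires a large-mass profile whose scale is $O\bigl((T^*-t_n)^{1/2}\bigr)$ \emph{and} whose time-translation parameter is bounded in rescaled units, and your treatment of both is heuristic. The claim that a spreading profile ``could not drive blowup'' has to be made rigorous by folding the degenerate cases (scale much larger than $(T^*-t_n)^{1/2}$, or divergent time parameter) into the same concatenation/stability argument, showing such nonlinear profiles contribute only a bounded spacetime norm on the rescaled copy of $[t_n,T^*)$, so that if \emph{every} large-mass profile were degenerate the solution could not blow up at $T^*$. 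More seriously, the time parameter is not addressed at all: a profile of mass $\geq M(Q)$ with own-frame time shift $\tau_n\to-\infty$ (pre-focusing at time $t_n$, refocusing strictly inside $(t_n,T^*)$) can perfectly well be responsible for the blowup, yet at time $t_n$ its mass is spread over a region of size roughly $|\tau_n|\ell_n\gg (T^*-t_n)^{1/2}$, so the conclusion can fail at those times. This is precisely why the corollary only asserts the existence of a sequence $t_n$: one must re-adjust the evaluation times toward the refocusing moment, shifting $t_n$ by $\tau_n\ell_n^2$, which need not be $O(\lambda_n^2)$---your ``absorbing the time shift at cost $O(\lambda_n^2)$'' presupposes exactly what has to be proven---and then verify $\ell_n\lesssim (T^*-\tilde t_n)^{1/2}$ for the adjusted times.

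The case $T^*=\infty$ is also not ``the same template.'' There the rescaled forward lifespan is infinite, so the ``not enough time to blow up'' heuristic gives no control on the scale of the large-mass profile: a profile at physical scale $\gg t_n^{1/2}$ has unlimited forward time and can drive the blowup, and the forward argument yields concentration at a scale unrelated to $t_n^{1/2}$. The argument must instead look backward from $t_n$: on $[t_0,t_n]$, whose rescaled length is $O(1)$ when $\lambda_n=t_n^{1/2}$, the $L^{2(d+2)/d}_{t,x}$ norm tends to infinity with $n$ (though it is finite for each fixed $n$), and the contradiction is the quantitative one ``uniformly bounded versus divergent''; only this pins the concentration scale to $O(t_n^{1/2})$, and the same time-parameter bookkeeping as above reappears there. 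Finally, a minor point: your pigeonhole claim that $\|v_n\|_{L^{2(d+2)/d}_{t,x}([-1,0]\times\R^d)}\geq\eta_0$ is not justifiable in general (the spacetime norm near $T^*$ may be spread so that every self-similar window carries little of it), but since you never use it, this is harmless.
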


\subsection{Outline of the proof}
Beginning with Bourgain's seminal work \cite{borg:scatter} on the energy-critical NLS, it has become apparent that in order to prove
spacetime bounds for general solutions, it is sufficient to treat a special class of solutions, namely, those that
are simultaneously localized in both frequency and space.  For further developments, see \cite{ckstt:gwp, RV, tao:gwp radial, thesis:art, Monica:thesis}.

A new and much more efficient alternative to Bourgain's induction on mass (or energy) method has recently been developed.  It uses
a (concentration) compactness technique to isolate minimal-mass/energy blowup solutions as opposed to the almost-blowup
solutions of the induction method.  Building on earlier developments in \cite{begout, bourg.2d, keraani-2, keraani, mv},
Kenig and Merle \cite{merlekenig} used this method to treat the energy-critical focusing problem with radial data in dimensions
three, four, and five.

To explain what the concentration compactness argument gives in our context, we need to introduce the following important notion:

\begin{definition}[Almost periodicity modulo scaling]\label{apdef}
Given $d \geq 1$ and $\mu = \pm 1$, a solution $u$ with lifespan $I$ is said to be \emph{almost periodic modulo scaling} if there
exists a (possibly discontinuous) function $N: I \to \R^+$ and a function $C: \R^+ \to \R^+$ such that
$$ \int_{|x| \geq C(\eta)/N(t)} |u(t,x)|^2\ dx \leq \eta$$
and
$$ \int_{|\xi| \geq C(\eta) N(t)} |\hat u(t,\xi)|^2\ d\xi \leq \eta$$
for all $t \in I$ and $\eta > 0$.  We refer to the function $N$ as the \emph{frequency scale function}
and to $C$ as the \emph{compactness modulus function}.
\end{definition}

\begin{remarks}
1. The parameter $N(t)$ measures the frequency scale of the solution at time $t$, and $1/N(t)$ measures the spatial scale; see
\cite{compact, tvz-higher} for further discussion.  Note that we have the freedom to modify $N(t)$ by any bounded function of
$t$, provided that we also modify the compactness modulus function $C$ accordingly. In particular, one could restrict $N(t)$ to
be a power of $2$ if one wished, although we will not do so here. Alternatively, the fact that the solution trajectory $t
\mapsto u(t)$ is continuous in $L^2_x(\R^d)$ can be used to show that the function $N$ may be chosen to depend
continuously on $t$.

2. By the Ascoli--Arzela Theorem, a family of functions is precompact in $L_x^2(\R^d)$ if and only if it is norm-bounded and
there exists a compactness modulus function $C$ so that
$$
\int_{|x| \geq C(\eta)} |f(x)|^2\ dx + \int_{|\xi| \geq C(\eta)} |\hat f(\xi)|^2\ d\xi \leq \eta
$$
for all functions $f$ in the family.  Thus, an equivalent formulation of Definition~\ref{apdef} is as follows: $u$ is almost
periodic modulo scaling if and only if
$$
\{ u(t): t \in I \} \subseteq \{ f(x/\lambda) : \lambda\in(0,\infty)\text{ and }f \in K \}.
$$
for some compact subset $K$ of $L^2_x(\R^d)$.
\end{remarks}

In \cite[Theorems~1.13 and~7.2]{compact} the following result was established (see also \cite{begout, keraani}), showing that any failure of
Conjecture~\ref{conj} must be `caused' by a very special type of solution.  For simplicity we state it only in the spherically symmetric case.

\begin{theorem}[Reduction to almost periodic solutions]\label{main-compact}
Fix $\mu$ and $d\geq2$.  Suppose that Conjecture~\ref{conj} fails for spherically symmetric data.  Then, there exists a
spherically symmetric maximal-lifespan solution $u$ which is almost periodic modulo scaling and which blows up both forward and backward in time,
and in the focusing case we also have $M(u) < M(Q)$.
\end{theorem}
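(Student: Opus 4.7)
The plan is to argue by contradiction via a standard minimal-mass extraction procedure. Define the critical mass
\[
m_0 := \inf\bigl\{ M(u) : u \text{ is a spherically symmetric maximal-lifespan solution to \eqref{nls} that blows up in at least one time direction}\bigr\},
\]
where in the focusing case we further restrict to solutions with $M(u) < M(Q)$. By the small-data global existence part of Theorem~\ref{local}, $m_0 > 0$; by the failure of Conjecture~\ref{conj} together with the Gagliardo--Nirenberg inequality (Theorem~\ref{sGN}), $m_0 < \infty$, and $m_0 < M(Q)$ in the focusing case. Select a sequence $u_n:I_n\times\R^d\to\C$ of spherically symmetric solutions with $M(u_n)\to m_0$ that blow up forward in time (the backward case being symmetric), and choose times $t_n\in I_n$ such that each $u_n$ has infinite $L^{2(d+2)/d}_{t,x}$ norm on $[t_n,\sup I_n)$.

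The engine of the proof is a linear profile decomposition for the radial Schr\"odinger evolution in $L^2_x(\R^d)$. Applied to the sequence $u_n(t_n)$, it produces, after passing to a subsequence, radial profiles $\phi^j\in L^2_x(\R^d)$, scales $\lambda_n^j\in(0,\infty)$, and times $s_n^j\in\R$ such that
\[
u_n(t_n) = \sum_{j=1}^{J} (\lambda_n^j)^{-d/2}\bigl[e^{is_n^j\Delta}\phi^j\bigr]\bigl(x/\lambda_n^j\bigr) + w_n^J,
\]
with asymptotic $L^2$-orthogonality of the parameters, Pythagorean decoupling of the mass, and control on the remainder in the sense that $\limsup_n\|e^{it\Delta}w_n^J\|_{L^{2(d+2)/d}_{t,x}}\to 0$ as $J\to\infty$. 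Because radial data carry no Galilean degrees of freedom, no spatial translations or frequency modulations appear, which is the crucial simplification enabling the $L^2$-critical argument in the radial class.

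To each profile $\phi^j$ attach a nonlinear profile $v^j$: the maximal-lifespan solution to \eqref{nls} whose evolution asymptotically matches $e^{it\Delta}\phi^j$ around $s_n^j$. The mass decoupling $\sum_j M(\phi^j) \leq m_0$ forces $M(v^j) \leq m_0$ for each $j$. Suppose two or more profiles are nontrivial. Then each $M(v^j) < m_0$, so by definition of $m_0$ every $v^j$ is global with finite Strichartz norm (and in the focusing case has mass below $M(Q)$, preserving the constraint). Summing the nonlinear profiles and invoking a long-time perturbation/stability lemma for the mass-critical NLS would then yield a uniform $L^{2(d+2)/d}_{t,x}$ bound on $u_n$ beyond $t_n$, contradicting the blowup assumption. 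Therefore exactly one profile is present, $w_n^J\to 0$ in $L^2_x$, and the associated nonlinear profile $u$ is a spherically symmetric solution with mass $m_0$ that blows up forward in time and satisfies $M(u)<M(Q)$ in the focusing case.

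It remains to upgrade this minimal-mass solution to one which is almost periodic modulo scaling and blows up in \emph{both} time directions. For almost periodicity: any sequence of times $\tau_n$ in the lifespan of $u$ yields, by the same profile-decomposition argument applied to $u(\tau_n)$, a single nontrivial profile carrying all of the mass $m_0$; this is precisely the statement that $\{u(\tau)\}$ is precompact in $L^2_x(\R^d)$ modulo the scaling symmetry, which by the Ascoli--Arzela characterization recorded in the remark after Definition~\ref{apdef} is equivalent to the existence of $N(\tau)$ and $C(\eta)$ as required. For two-sided blowup: if $u$ did not blow up backward in time, then by the scattering statement of Theorem~\ref{local} together with the compactness of its orbit, $u$ would in fact be global with finite $L^{2(d+2)/d}_{t,x}$ norm in the backward direction; a time-translation and further extraction would then produce a minimal blowup solution with strictly smaller mass, contradicting the definition of $m_0$. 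The main technical hurdle in this program is establishing the radial $L^2$-profile decomposition and the accompanying mass-critical stability lemma in dimensions $d\geq 3$; once these analytic tools are in place (as in \cite{compact, tvz-higher}), the extraction proceeds mechanically.
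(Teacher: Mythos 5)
Your overall strategy is the right one: the paper does not prove this theorem at all but quotes it from \cite{compact} (Theorems~1.13 and~7.2 there), and the argument in that reference is exactly the minimal-mass extraction via a radial $L^2$ profile decomposition plus a mass-critical stability lemma that you outline. However, your end-game contains a genuine gap. In the standard argument the times $t_n$ are not chosen merely so that $u_n$ blows up forward of $t_n$; they are chosen to \emph{split} the infinite $L^{2(d+2)/d}_{t,x}$ norm, so that the spacetime norm of $u_n$ diverges both on $[t_n,\sup I_n)$ and on $(\inf I_n,t_n]$. This two-sided divergence is not cosmetic: it is what rules out escape of the time parameters $s_n^j\to\pm\infty$ in the profile decomposition, and that exclusion is needed both to conclude that the limiting solution blows up in both directions and, later, to prove almost periodicity. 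With only forward blowup, your compactness step fails: applying the decomposition to $u(\tau_n)$ you can indeed conclude there is a single profile $\phi$ carrying the full mass $m_0$, but the conclusion then reads $u(\tau_n)=g_n\bigl[e^{is_n\Delta}\phi\bigr]+o_{L^2}(1)$ for some scalings $g_n$ and times $s_n$, and if $s_n\to-\infty$ (which forward blowup alone cannot exclude, since the corresponding nonlinear profile may scatter backward while still blowing up forward) the right-hand side is a rescaled free evolution at large negative times, which is \emph{not} precompact modulo scaling. So almost periodicity does not follow as you claim.

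Your proposed a posteriori upgrade to two-sided blowup is also not valid as stated: mass is conserved and already equals $m_0$, so no ``time-translation and further extraction'' can produce a blowup solution of strictly smaller mass. There is a correct argument showing that an almost periodic solution cannot scatter in either direction (backward scattering plus the no-free-wave property of Lemma~\ref{duhamel L} forces $u_-=0$ and hence $u\equiv 0$), but you cannot invoke it here because your derivation of almost periodicity is exactly what is missing in the one-sided setting; the reasoning becomes circular. The fix is the one used in \cite{compact}: first prove a Palais--Smale-type proposition under the hypothesis that the spacetime norms diverge both forward and backward of $t_n$, then choose $t_n$ to bisect the infinite norm of $u_n$, obtaining in one stroke a minimal-mass solution that blows up in both time directions, and only then deduce almost periodicity by applying the same proposition at arbitrary times $\tau_n$, where the two-sided divergence is automatic.
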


In \cite{KTV}, this result was further refined so as to identify three specific enemies.  Once again, we state it only
in the spherically symmetric case.

\begin{theorem}[Three special scenarios for blowup, \cite{KTV}]\label{comp}
Fix $\mu$ and $d\geq 2$ and suppose that Conjecture~\ref{conj} fails for spherically symmetric data. Then there exists a
spherically symmetric maximal-lifespan solution $u$ which is almost periodic modulo scaling, blows up both forward and backward in time,
and in the focusing case also obeys $M(u) < M(Q)$.  Moreover, the solution $u$ may be chosen to match one of the following three scenarios:
\begin{CI}
\item (Soliton-like solution) We have $I = \R$ and
\begin{equation}\label{nsim}
 N(t) = 1
\end{equation}
for all $t \in \R$ (thus the solution stays in a bounded space/frequency range for all time).
\item (Double high-to-low frequency cascade) We have $I = \R$,
\begin{equation}\label{cascade}
\liminf_{t \to -\infty} N(t) = \liminf_{t \to +\infty} N(t) = 0,
\end{equation}
and
\begin{equation}\label{freqbound}
\sup_{t \in \R} N(t) < \infty
\end{equation}
for all $t \in I$.
\item (Self-similar solution) We have $I = (0,+\infty)$ and
\begin{equation}\label{ssb}
 N(t) = t^{-1/2}
\end{equation}
for all $t \in I$.
\end{CI}
\end{theorem}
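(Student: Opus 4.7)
The plan is to start from the maximal-lifespan spherically symmetric almost periodic solution $u$ produced by Theorem~\ref{main-compact}, with frequency scale function $N: I \to \R^+$, and to refine it via a rescaling-and-compactness procedure into a new almost periodic blowup solution whose frequency scale function matches one of the three canonical profiles. The basic tool is that the class of almost periodic modulo scaling blowup solutions (with $M(u)<M(Q)$ in the focusing case) is closed under the scaling symmetry \eqref{scaling}, and that, by the Arzel\`a--Ascoli characterization recorded in the second remark after Definition~\ref{apdef}, any sequence $u_n(t,x):=\lambda_n^{d/2}u(t_n+\lambda_n^2 t,\lambda_n x)$ with bounded $N_n(0)$ admits a subsequence whose initial data converge strongly in $L^2_x(\R^d)$; a standard stability/perturbation theory for the mass-critical NLS then promotes this to convergence of the solutions themselves, producing a limit $u_\infty$ still belonging to the same class.

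The refinement would proceed by a case split on the behaviour of $N$ on $I$, combined with the freedom (noted in the first remark after Definition~\ref{apdef}) to modify $N$ by any bounded factor and to rescale by the symmetry \eqref{scaling}. If $N$ is bounded above and below on $I$, I would replace $N$ by the constant $1$; then $\{u(t):t\in I\}$ is precompact in $L^2_x$, and the blowup criterion of Theorem~\ref{local} together with the assumption that $u$ blows up in both time directions forces $I=\R$, giving the soliton scenario \eqref{nsim}. If instead $\sup_I N=\infty$, I would select $t_n\in I$ with $N(t_n)\to\infty$, rescale so that $N_n(0)=1$, and pass to a limit $u_\infty$; by choosing $t_n$ near the endpoints of $I$ and exploiting that the lifespan of $u_\infty$ inherits the relative position of $t_n$ inside $I$, this limit can be arranged to have $I_\infty=(0,+\infty)$. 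A quantitative control on how fast $N$ can vary in time (a Duhamel/Strichartz "no rapid frequency cascade" estimate, as in \cite{compact,tvz-higher}) then upgrades this to $N_\infty(t)=t^{-1/2}$, yielding the self-similar scenario \eqref{ssb}. Finally, the remaining case $\sup_I N<\infty$ but $\inf_I N=0$ reduces (after rescaling each time by $1/N(t_n)$ for suitable sequences $t_n$ approaching $\pm\infty$) to the double high-to-low frequency cascade scenario \eqref{cascade}--\eqref{freqbound}, since the limiting $N_\infty$ inherits the property that $\liminf N=0$ at both ends while remaining uniformly bounded.

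The main obstacle will be the extraction of the self-similar enemy. Several things have to be verified simultaneously: that the rescaled limit $u_\infty$ is non-trivial (this uses mass conservation together with the uniform compactness of $\{u(t)/\lambda^{d/2}(\cdot/\lambda)\}$ modulo scaling), that $I_\infty$ is exactly a half-line rather than all of $\R$ or a bounded interval, and that $N_\infty$ has the precise law $t^{-1/2}$ rather than some other diverging profile. Each of these depends on a quantitative bound preventing $N(t)$ from oscillating on time scales shorter than $N(t)^{-2}$, which in this $L^2$-critical setting must be proved from the Duhamel formula and Strichartz estimates for almost periodic solutions. The soliton and cascade extractions are comparatively routine once the compactness-under-rescaling machinery is in place, as they require only the bookkeeping of where $N$ takes its extrema together with the blowup criterion from Theorem~\ref{local}.
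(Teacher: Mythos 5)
There is a genuine gap, and it lies exactly where you suspect: the extraction of the three precise profiles. Your case split on the original solution's frequency scale function (N bounded above and below; $\sup_I N=\infty$; $\sup_I N<\infty$ but $\inf_I N=0$) is not the decomposition that makes the argument work, and the selection mechanism that pins the limiting solution to one of the three scenarios is missing. Concretely: if $N(t)$ grows to infinity slowly, say $N(t)\sim\langle t\rangle^{1/2}$ on $I=\R$ (which is perfectly consistent with almost periodicity, since such an $N$ varies by a factor $1+o(1)$ on intervals of length $N(t)^{-2}$), then your case (b) applies, but rescaling around times $t_n$ with $N(t_n)\to\infty$ produces a limit whose frequency scale is constant -- a soliton -- and nothing forces $I_\infty=(0,\infty)$ or $N_\infty(t)=t^{-1/2}$. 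The lifespan and scale function of the limit are governed by the behaviour of $N$ on windows of length comparable to $N(t_n)^{-2}$ around $t_n$ (via the stability theory and the lemma that $N(t)\gtrsim(\sup I-t)^{-1/2}$ at a finite endpoint), not by ``the relative position of $t_n$ inside $I$.'' Similarly, a single-sided cascade ($N\to0$ only as $t\to-\infty$, $N\sim1$ as $t\to+\infty$) falls into your case (c), yet your recipe gives no reason why the rescaled limit should have $\liminf N_\infty=0$ at \emph{both} ends; in general a rescaled limit of an arbitrary almost periodic solution is just another arbitrary almost periodic solution, so without a genuine selection argument no progress is made. (Also, in your case (a), $I=\R$ does not follow from the blowup criterion plus two-sided blowup; it requires the lemma that $N$ must blow up at any finite endpoint of the maximal lifespan.)

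The proof in \cite{KTV}, which this paper invokes, supplies precisely the missing mechanism through two auxiliary quantities rather than through $\sup_I N$ and $\inf_I N$. One sets $\osc(T):=\inf_{t_0\in I}\,\sup\{N(t):|t-t_0|\le TN(t_0)^{-2}\}\,/\,\inf\{N(t):|t-t_0|\le TN(t_0)^{-2}\}$, and also compares $N(t_0)$ with $\sup_{t\le t_0}N(t)$ and $\sup_{t\ge t_0}N(t)$. If $\osc(T)$ stays bounded as $T\to\infty$, rescaling about times realizing the infimum over longer and longer windows yields, after passing to a limit via the compactness-plus-stability machinery you describe, a global solution with $N\equiv1$: the soliton. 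If $\osc(T)\to\infty$ but there are times at which $N(t_0)$ is arbitrarily small compared with both its past and future suprema, one rescales about times that nearly minimize $N$ over expanding windows at whose two ends $N$ is much larger; it is this choice of near-minima that forces the limit to have $N_\infty$ bounded with $\liminf_{t\to\pm\infty}N_\infty(t)=0$, i.e.\ the double cascade. In the remaining case ($\osc(T)\to\infty$ while $N(t_0)$ stays comparable to its past or future suprema uniformly in $t_0$), the limit can be shown to live on a half-line with $N_\infty(t)\sim t^{-1/2}$, which after modifying $N$ by a bounded factor gives the self-similar enemy. Your toolkit (rescaling, Arzel\`a--Ascoli compactness, perturbation theory, local constancy of $N$) is the right one and matches \cite{KTV}, but without this $\osc$/past-future-suprema case analysis the trichotomy itself is not established; acknowledging the self-similar extraction as ``the main obstacle'' does not substitute for it.
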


In light of this result, the proof of Theorem~\ref{main} is reduced to showing that none of these three scenarios can occur.
In doing this, we follow the model set forth in \cite{KTV}.  In all cases, the key step is to prove that $u$ has additional regularity.
Indeed, to treat the first two scenarios, we need more than one derivative in $L_x^2$; for the self-similar scenario, $H^1_x$ suffices.
The possibility of showing such additional regularity stems from the fact
that $u$ is both frequency and space localized; this in turn is an expression of the fact that $u$ has minimal mass among all
blowup solutions.

A further manifestation of this minimality is the absence of a scattered wave at the endpoints of the lifespan $I$; more formally, we have

\begin{lemma}[{\cite[Section 6]{compact}}]\label{duhamel L}
Let $u$ be a solution to \eqref{nls} which is almost periodic modulo scaling on its maximal-lifespan $I$.  Then, for all $t\in I$,
\begin{equation}\label{duhamel}
\begin{aligned}
u(t)&=\lim_{T\nearrow\,\sup I}i\int_t^T e^{i(t-t')\Delta} F(u(t'))\,dt'\\
&=-\lim_{T\searrow\,\inf I}i\int_T^t e^{i(t-t')\Delta} F(u(t'))\,dt',
\end{aligned}
\end{equation}
as weak limits in $L_x^2$.
\end{lemma}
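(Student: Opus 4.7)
The plan is to reduce the claim to a statement about the linear Schr\"odinger evolution of $u(T)$ at the endpoints of the lifespan. For $T>t$ in $I$, the Duhamel formula \eqref{old duhamel} rearranges to
$$
i\int_t^T e^{i(t-t')\Delta}F(u(t'))\,dt'=u(t)-e^{i(t-T)\Delta}u(T),
$$
so the first identity in \eqref{duhamel} is equivalent to the weak-$L^2_x$ convergence $e^{i(t-T)\Delta}u(T)\rightharpoonup 0$ as $T\nearrow\sup I$. The second identity is the analogous statement as $T\searrow\inf I$ and is handled identically, so I focus on the forward endpoint.

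Fix any sequence $T_n\to\sup I$ in $I$. Almost periodicity modulo scaling says that the rescaled profiles $g_n(y):=N(T_n)^{-d/2}u(T_n,y/N(T_n))$ form a precompact family in $L^2_x$, so after passing to a subsequence I may assume $N(T_n)\to N_\infty\in[0,\infty]$ and $g_n\to f$ strongly in $L^2_x$, with $\|f\|_{L^2_x}^2=M(u)$. It then suffices to show $\langle e^{i(t-T_n)\Delta}u(T_n),\phi\rangle\to 0$ for every Schwartz $\phi$ whose Fourier transform is supported in some ball $\{|\xi|\leq R\}$; such $\phi$ are dense in $L^2_x$.

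If $N_\infty=\infty$, Plancherel, the substitution $\xi=N(T_n)\eta$, and Cauchy--Schwarz give
$$
|\langle e^{i(t-T_n)\Delta}u(T_n),\phi\rangle|\leq \bigl\|\hat g_n\,\mathbf{1}_{|\eta|\leq R/N(T_n)}\bigr\|_{L^2}\|\hat\phi\|_{L^2},
$$
which vanishes since $R/N(T_n)\to 0$ and $\hat g_n\to\hat f$ in $L^2_x$. If $N_\infty\in(0,\infty)$, then $u(T_n)\to g$ strongly in $L^2_x$; the sub-case $\sup I=\infty$ reduces to the standard dispersive fact that $e^{is\Delta}g\rightharpoonup 0$ in $L^2_x$ as $|s|\to\infty$, while the sub-case $\sup I<\infty$ is ruled out because strong $L^2_x$ convergence of $u(T_n)$ to nontrivial data near $\sup I$, combined with Theorem~\ref{local}, would produce a genuine extension of $u$ past $\sup I$, contradicting maximality. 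Finally, for $N_\infty=0$, the same substitution produces a vanishing prefactor in front of an oscillatory integral,
$$
\langle e^{i(t-T_n)\Delta}u(T_n),\phi\rangle = N(T_n)^{d/2}\int e^{-i(t-T_n)N(T_n)^2|\eta|^2}\hat g_n(\eta)\,\overline{\hat\phi(N(T_n)\eta)}\,d\eta;
$$
approximating $f$ in $L^2_x$ by a Schwartz function $\psi$, bounding the main piece by $N(T_n)^{d/2}\|\hat\psi\|_{L^1}\|\hat\phi\|_{L^\infty}\to 0$ and the tail by Cauchy--Schwarz, delivers the required decay.

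The main obstacle is this last case $N_\infty=0$: Cauchy--Schwarz alone yields only a bounded, not vanishing, pairing, so the prefactor $N(T_n)^{d/2}\to 0$ must be coupled with a density argument replacing $f$ by a Schwartz approximant and with an $L^1$-type estimate on the oscillatory integral to recover decay. The other two regimes $N_\infty\in\{\infty\}\cup(0,\infty)$ follow from direct Fourier-localization and standard linear-evolution arguments, respectively.
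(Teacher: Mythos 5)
Your argument is correct, and it is worth noting that the paper itself gives no proof of this lemma: it is quoted from \cite[Section 6]{compact}. Your reduction is the same as the one used there --- via the Duhamel formula the claim is equivalent to $e^{i(t-T)\Delta}u(T)\rightharpoonup 0$ in $L^2_x$ as $T$ approaches the relevant endpoint --- but your implementation of the compactness step is organized differently. The cited argument works quantitatively with the concentration bounds of Definition~\ref{apdef}: it tests against a fixed nice $\phi$, splits $u(T)$ using the spatial and frequency cutoffs at scale $N(T)$, and runs a dichotomy on $N(T)^2|T-t|$ (dispersive estimate on the spatially concentrated piece when this parameter is large; Fourier support shrinking to the origin when it is small, together with the blow-up of $N(t)$ at a finite endpoint, which is itself a consequence of the local theory). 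You instead extract a subsequence along which the rescaled profiles converge strongly and $N(T_n)\to N_\infty\in[0,\infty]$, and treat the three regimes separately; the case $N_\infty\in(0,\infty)$ with $\sup I<\infty$ is excluded by the extension argument, which is exactly the mechanism behind the endpoint blow-up of $N(t)$ in the cited proof, so the underlying ingredients coincide. Two points you should make explicit, though both are standard: (i) the extension step needs slightly more than the literal ``continuous dependence'' item of Theorem~\ref{local}, namely the standard $L^2$ stability/perturbation result giving a uniform lower bound on the lifespan for data close to $g$ in $L^2_x$ (then uniqueness identifies the perturbed solution with $u$ and contradicts maximality); (ii) since your subsequence extraction precedes the choice of test function, you should close the argument with the sub-subsequence principle (plus the uniform $L^2_x$ bound from mass conservation) to pass from the extracted subsequence to the full limit $T\nearrow\sup I$. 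With these remarks your proof is complete, and arguably more transparent in the regime $N_\infty=0$, where your computation shows the pairing vanishes uniformly in the time parameter without any dispersive input.
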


Another important property of solutions that are almost periodic modulo scaling is that the behaviour of the spacetime norm
is governed by that of $N(t)$.  More precisely, we have the following lemma from \cite{KTV}:

\begin{lemma}[Spacetime bound, \cite{KTV}]\label{spacelemma}
Let $u$ be a non-zero solution to \eqref{nls} with lifespan $I$, which is almost periodic modulo scaling
with frequency scale function $N: I \to \R^+$.  If $J$ is any subinterval of $I$, then
\begin{equation*}
\int_J N(t)^2\,dt \lesssim_u \int_J \int_{\R^d} |u(t,x)|^{\frac{2(d+2)}{d}}\, dx\, dt\lesssim_u 1 + \int_J N(t)^2\,dt.
\end{equation*}
\end{lemma}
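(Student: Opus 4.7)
The plan is to establish the two inequalities separately: the lower bound comes essentially for free from the spatial concentration encoded in Definition~\ref{apdef}, whereas the upper bound requires combining the small-data local theory with a covering argument.

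For the lower bound I would first prove the pointwise estimate $\int_{\R^d}|u(t,x)|^{2(d+2)/d}\,dx \gtrsim_u N(t)^2$ for each $t\in I$ and then integrate. Since $u$ is non-zero we have $M(u)>0$, so applying Definition~\ref{apdef} with $\eta := M(u)/2$ yields a radius $C = C(u)$ such that $\int_{|x|\leq C/N(t)}|u(t,x)|^2\,dx \geq M(u)/2$ for every $t\in I$. Hölder's inequality on $B_{C/N(t)}$ with the pair $\bigl(\tfrac{d+2}{d}, \tfrac{d+2}{2}\bigr)$ then gives
$$\tfrac{M(u)}{2} \leq \Bigl(\int_{|x|\leq C/N(t)}|u(t,x)|^{2(d+2)/d}\,dx\Bigr)^{\!d/(d+2)} |B_{C/N(t)}|^{2/(d+2)},$$
and since $|B_{C/N(t)}|\sim_u N(t)^{-d}$, rearranging gives the pointwise lower bound. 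Integrating over $J$ closes this direction.

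For the upper bound I would proceed in three steps. In the first step, I show there exist $\delta_0 = \delta_0(u) > 0$ and $C_0 = C_0(u)$ such that for every $t_0 \in I$, the interval $I_{t_0} := [t_0 - \delta_0 N(t_0)^{-2}, t_0 + \delta_0 N(t_0)^{-2}] \cap I$ satisfies $\|u\|_{L^{2(d+2)/d}_{t,x}(I_{t_0}\times\R^d)} \leq C_0$. This uses that the rescaled profiles $g_{t_0}(y) := N(t_0)^{-d/2} u\bigl(t_0, N(t_0)^{-1} y\bigr)$ lie in a precompact set $K \subset L^2_x$, so by Strichartz and compactness, $\|e^{is\Delta} f\|_{L^{2(d+2)/d}_{s,y}([0,\delta] \times \R^d)} \to 0$ as $\delta \to 0$ uniformly in $f \in K$; choosing $\delta_0$ below the small-data threshold of Theorem~\ref{local} and invoking the stability theory for the (scale-invariant) rescaled NLS yields the bound. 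In the second step, I Vitali-select a subcollection $\{I_{t_n}\}$ covering $J$ with bounded overlap. In the third step, I show $\int_{I_{t_n}} N(t)^2\,dt \gtrsim_u 1$, so that the number of intervals is $\lesssim_u 1 + \int_J N^2\,dt$; summing the bound from the first step over $n$ then produces $\int_J\int|u|^{2(d+2)/d}\,dx\,dt \lesssim_u 1 + \int_J N^2\,dt$.

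The main obstacle is this third step, namely establishing $\int_{I_{t_n}} N^2\,dt \gtrsim_u 1$. It reduces to the comparability $N(t)\sim_u N(t_n)$ on $I_{t_n}$, and since Definition~\ref{apdef} pins down $N$ only up to a bounded multiplicative factor at each time, this oscillation control is not automatic. I expect to extract it from the interplay of the uniform local spacetime bound in the first step with $L^2_x$-continuity of the nonlinear flow: since $u(t) \approx e^{i(t-t_n)\Delta} u(t_n)$ on $I_{t_n}$ (by Strichartz applied to the Duhamel formula, together with the first step), the spatial concentration scale of $u(t)$ cannot drift far from $1/N(t_n)$, forcing $N(t) \sim_u N(t_n)$. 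The remaining pieces (Hölder, Strichartz, Vitali) are routine.
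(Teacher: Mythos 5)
Your overall architecture is sound and is in fact the same route as the cited source ([KTV]; the paper itself gives no proof): the lower bound via spatial concentration plus H\"older is complete and correct as you wrote it, and the upper bound via characteristic intervals of length $\sim\delta_0 N(t_0)^{-2}$ on which the scattering norm is uniformly bounded (compactness of the rescaled orbit, smallness of the free evolution for short times, then the local theory), with the number of intervals counted by $\int_J N(t)^2\,dt$, is exactly the standard argument. The genuine gap is the step you yourself flag, the local constancy $N(t)\sim_u N(t_n)$ on $I_{t_n}$, and your sketch of it does not close for two concrete reasons. First, with Step 1 as stated you only know $\|u\|_{L^{2(d+2)/d}_{t,x}(I_{t_n}\times\R^d)}\leq C_0(u)$, so Strichartz applied to Duhamel gives $\|u(t)-e^{i(t-t_n)\Delta}u(t_n)\|_{L^2_x}\lesssim C_0(u)^{1+\frac4d}$, which is $O_u(1)$ rather than small; you must strengthen Step 1 so that for any prescribed $\eta_1>0$ there is $\delta_0(u,\eta_1)$ making the spacetime norm at most $\eta_1$ on $I_{t_n}$ (this the compactness argument does give, since the free evolution can be made uniformly small over the compact set $K$ by shrinking $\delta_0$). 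Second, even granting $u(t)\approx e^{i(t-t_n)\Delta}u(t_n)$, ``the spatial concentration scale cannot drift'' does not by itself force $N(t)\sim_u N(t_n)$: Definition~\ref{apdef} only provides tail bounds, so a function concentrated at a spatial scale much \emph{finer} than $1/N(t)$, or at frequencies much \emph{lower} than $N(t)$, is perfectly consistent with the definition. To pin down $N(t)$ you must play the two sides of the definition against each other: the approximate identity transports both the frequency localization of $u(t_n)$ at scale $N(t_n)$ (exactly, since $e^{is\Delta}$ commutes with Fourier multipliers) and its spatial localization at scale $1/N(t_n)$ (approximately, using a non-spreading bound for the free evolution of the frequency-localized piece over times $|s|\lesssim N(t_n)^{-2}$, e.g.\ the small-time kernel estimate \eqref{lr:small times}), and then a Bernstein/uncertainty-principle computation shows that a fixed fraction of the mass cannot live simultaneously in a ball of radius $\lesssim_u 1/N(t_n)$ and at frequencies $\lesssim_u N(t)$ (resp.\ radius $\lesssim_u 1/N(t)$ and frequencies $\lesssim_u N(t_n)$) unless $N(t)\sim_u N(t_n)$.

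Alternatively, and more cleanly, local constancy follows from a compactness/contradiction argument that avoids kernel estimates entirely: if it failed, there would be $t_n\in I$ and $t_n'\in I$ with $|t_n'-t_n|\leq\delta_0 N(t_n)^{-2}$ and $N(t_n')/N(t_n)\to0$ or $\infty$; rescaling $u$ around $(t_n,N(t_n))$, using precompactness modulo scaling to extract a strong $L^2_x$ limit of the data, and invoking the uniform local theory and continuous dependence from Theorem~\ref{local}, the rescaled $u(t_n')$ converge strongly to a non-zero limit, while almost periodicity forces their frequency scales to diverge, so they must converge weakly (hence strongly, along the sequence) to zero --- a contradiction. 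Two smaller points: your Vitali selection lets intervals protrude beyond $J$, so $\int_{I_{t_n}}N(t)^2\,dt\gtrsim_u1$ does not immediately bound the count by $\int_J N(t)^2\,dt$; it is simpler to tile a compact exhaustion of $J$ by \emph{consecutive} characteristic subintervals chosen greedily, with at most one incomplete piece, which is what produces the additive constant $1$ in the upper bound. With these repairs the proof is complete and coincides with the one in [KTV].
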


The nonexistence of self-similar solutions is proved in Section~\ref{ss-sec}.  We first prove that any such solution would belong
to $C^0_t H_x^1$ and then observe that $H_x^1$ solutions are global (see the discussion after Theorem~\ref{main}),
while self-similar solutions are not.

For the remaining two cases, higher regularity is proved in Section~\ref{glob-sob-sec}.  In order to best take advantage of
Lemma~\ref{duhamel L}, we exploit a decomposition of spherically symmetric functions into incoming and outgoing waves; this is
discussed in Section~\ref{S:in/out}.

In Section~\ref{hilo cascade-sec}, we use the additional regularity together with the conservation of energy to preclude the
double high-to-low frequency cascade.  In Section~\ref{soliton-sec}, we disprove the existence of soliton-like solutions using a
truncated virial identity in much the same manner as \cite{merlekenig}.

As noted earlier, the argument just described is closely modelled on \cite{KTV}, which treated the same equation in two dimensions.
The main obstacle in extending that work to higher dimensions is the fractional power appearing in the nonlinearity.  This problem
presents itself when we prove additional regularity, which is already the most demanding part of \cite{KTV}.  Additional regularity
is proved via a bootstrap argument using Duhamel's formula.  However, fractional powers can downgrade regularity (a fractional
power of a smooth function need not be smooth); in particular, they preclude the simple Littlewood-Paley arithmetic that
is usually used in the case of polynomial nonlinearities.

The remedy is twofold: first we use fractional chain rules (see Lemmas~\ref{F Lip} and~\ref{fdfp}) that allow us to take more than one
derivative of a nonlinearity that is merely $C^{1+\frac4d}$ in $u$.  Secondly, we push through the resulting complexities in the
bootstrap argument.  An important role is played by Lemma~\ref{Gronwall} (a Gronwall-type result), which we use to untangle the intricate
relationship between frequencies in $u$ and those in $|u|^\frac4d u$.

\subsection*{Acknowledgements}
We are grateful to Shuanglin Shao for access to preliminary drafts of his work.

R.~K. was supported by NSF grants DMS-0701085 and DMS-0401277 and by
a Sloan Foundation Fellowship.  He is also grateful to the Institute
for Advanced Study (Princeton) for its hospitality. X.~Z. was
supported by NSF-10601060 and Project 973 in China. M.~V. was
supported under NSF grant DMS-0111298.

Any opinions, findings and conclusions or recommendations expressed are those of the authors and do not reflect the views of the
National Science Foundation.

\section{Notation and linear estimates}

This section contains the basic linear estimates we use repeatedly in the paper.

\subsection{Some notation}
We use $X \lesssim Y$ or $Y \gtrsim X$ whenever $X \leq CY$ for some constant $C>0$.  We use $O(Y)$ to denote any quantity $X$
such that $|X| \lesssim Y$.  We use the notation $X \sim Y$ whenever $X \lesssim Y \lesssim X$.  The fact that these constants
depend upon the dimension $d$ will be suppressed.  If $C$ depends upon some additional parameters, we will indicate this with
subscripts; for example, $X \lesssim_u Y$ denotes the assertion that $X \leq C_u Y$ for some $C_u$ depending on $u$.

We use the `Japanese bracket' convention $\langle x \rangle := (1 +|x|^2)^{1/2}$.

We write $L^q_t L^r_{x}$ to denote the Banach space with norm
$$ \| u \|_{L^q_t L^r_x(\R \times \R^d)} := \Bigl(\int_\R \Bigl(\int_{\R^d} |u(t,x)|^r\ dx\Bigr)^{q/r}\ dt\Bigr)^{1/q},$$
with the usual modifications when $q$ or $r$ are equal to infinity, or when the domain $\R \times \R^d$ is replaced by a smaller
region of spacetime such as $I \times \R^d$.  When $q=r$ we abbreviate $L^q_t L^q_x$ as $L^q_{t,x}$.

The next lemma is a variant of Gronwall's inequality that we will use to handle some bootstrap arguments below.
The proof given is a standard application of techniques from the theories of Volterra and Toeplitz operators.

\begin{lemma}[A Gronwall inequality]\label{Gronwall}
Fix $r\in(0,1)$ and $K\geq 4$.  Let $b_k$ be a bounded sequence of non-negative numbers and $x_k$
a sequence obeying $0\leq x_k\leq b_k$ for $0\leq k<K$ and
\begin{align}\label{Gron rec}
0\leq x_k \leq b_k + \sum_{l=0}^{k-K} r^{k-l} x_l \qquad \text{for all $k\geq K$.}
\end{align}
Then
\begin{align}\label{Gron bound}
0\leq x_k \lesssim \sum_{l=0}^{k} r^{k-l} \exp\bigl\{\tfrac{\log(K-1)}{K-1}\, (k-l)\bigr\} b_l
\end{align}
for all $k\geq 0$.  In particular, if $b_k=O(2^{-k\sigma})$ and $2^{\sigma} r (K-1)^{1/(K-1)} < 1$,
then $x_k=O(2^{-k\sigma})$.
\end{lemma}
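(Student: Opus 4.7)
The plan is to rescale by the conjectured growth rate and reduce matters to a standard sub-contractive inequality. Set $\gamma := (K-1)^{1/(K-1)}$, $\alpha := r\gamma$, and introduce $y_k := \alpha^{-k} x_k$, $c_k := \alpha^{-k} b_k$. Dividing the hypothesis \eqref{Gron rec} by $\alpha^k$ transforms it into
\[ y_k \leq c_k + \sum_{m=K}^{k} \gamma^{-m}\, y_{k-m} \qquad (k \geq K), \]
with $y_k \leq c_k$ for $0 \leq k < K$. The target inequality \eqref{Gron bound} is equivalent to $y_k \lesssim \max_{l \leq k} c_l$, or more weakly to $y_k \lesssim \sum_{l \leq k} c_l$.

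The algebraic crux is verifying the contraction condition
\[ \theta := \sum_{m=K}^{\infty} \gamma^{-m} = \frac{\gamma^{-K}}{1-\gamma^{-1}} < 1. \]
After rearrangement this reads $\tfrac{K}{K-1} < (K-1)^{1/(K-1)}$, i.e.\ $(1 + \tfrac{1}{K-1})^{K-1} < K-1$. Since $(1 + \tfrac{1}{n})^n$ is monotonically increasing to $e$, the left side is bounded by $e$, whereas $K - 1 \geq 3 > e$ for $K \geq 4$. This is the sole place where the hypothesis $K \geq 4$ enters the argument.

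With $\theta < 1$ in hand, introduce the running maxima $y_k^* := \max_{l \leq k} y_l$ and $c_k^* := \max_{l \leq k} c_l$, and prove by induction on $k$ that $y_k^* \leq (1-\theta)^{-1} c_k^*$. The case $k < K$ is immediate from $y_k \leq c_k$. For $k \geq K$, bound each term in the sum by $y_{k-K}^*$ and extend the range to infinity to obtain $y_k \leq c_k + \theta\, y_{k-K}^*$; combining with the inductive hypothesis at step $k - K$ and the monotonicity $c_{k-K}^* \leq c_k^*$ gives
\[ y_k \leq c_k^* + \frac{\theta}{1-\theta}\, c_k^* = \frac{c_k^*}{1-\theta}, \]
hence $y_k^* = \max\{y_{k-1}^*, y_k\} \leq (1-\theta)^{-1} c_k^*$ as well.

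Reversing the rescaling yields $x_k \leq (1-\theta)^{-1} \max_{l \leq k} (r\gamma)^{k-l} b_l$, which is in turn dominated by $(1-\theta)^{-1} \sum_{l=0}^k (r\gamma)^{k-l} b_l$; this is precisely \eqref{Gron bound}. The closing assertion about $O(2^{-k\sigma})$ decay then reduces to the geometric-series computation $\sum_{l=0}^k (r\gamma)^{k-l} 2^{-l\sigma} = 2^{-k\sigma} \sum_{m=0}^k (2^\sigma r \gamma)^m$, which is $O(2^{-k\sigma})$ whenever $2^\sigma r \gamma < 1$. The only genuinely non-routine ingredient is verifying $\theta < 1$; everything else is standard bootstrap manipulation, in the spirit of Volterra/Toeplitz resolvent estimates but phrased so as to avoid any explicit use of generating functions.
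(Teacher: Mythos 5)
Your argument is correct, and it takes a genuinely different route from the paper. The paper reduces to the equality case $(1-A)x=b$ for a lower-triangular Toeplitz matrix, passes to generating functions, and shows that the symbol $a(z)=(1-rz)/(1-rz-r^Kz^K)$ is bounded and analytic on the disk $|z|\le r^{-1}(K-1)^{-1/(K-1)}$ (via $e^x\ge 1+x$, which is where the hypothesis $K\ge 4$, i.e.\ $\log(K-1)>1$, enters); Cauchy estimates for the Taylor coefficients of $a$ then yield \eqref{Gron bound}. You instead rescale by $\alpha=r(K-1)^{1/(K-1)}$ and run an elementary bootstrap on running maxima, the crux being $\theta=\sum_{m\ge K}(K-1)^{-m/(K-1)}<1$, which you correctly reduce to $(1+\tfrac1{K-1})^{K-1}<K-1$; this is the same numerical fact in disguise, since $\theta<1$ is precisely convergence of the paper's Neumann series for $(1-A)^{-1}$ at the positive real boundary point $z=r^{-1}(K-1)^{-1/(K-1)}$, and positivity of the sequences lets you get away with that single point instead of estimates on the whole disk. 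Your approach buys elementarity (no complex analysis), the slightly stronger max-form conclusion $x_k\le (1-\theta)^{-1}\max_{l\le k}(r(K-1)^{1/(K-1)})^{k-l}b_l$, and a constant that is in fact uniform in $K\ge4$ and $r$; the paper's approach buys a standard, reusable resolvent/Toeplitz template. One cosmetic slip: \eqref{Gron bound} is the sum-form bound, so it is implied by (not ``equivalent to'') your max-form statement — but since you prove the stronger form, the lemma follows in full, including the final $O(2^{-k\sigma})$ assertion, whose geometric-series verification you carry out correctly.
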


\begin{proof}
Elementary arguments show that we need only obtain the bound for the case of equality, namely, where
\begin{equation}\label{Gron eq}
(1 - A) x = b.
\end{equation}
Here $x$ and $b$ denote the semi-infinite vectors built from the corresponding sequences, while $A$ is the matrix
with entries
$$
A_{k,l}= \begin{cases} r^{k-l} &:\text{if $k-l\geq K$,}\\ 0 &:\text{otherwise.}\end{cases}
$$

The triangular structure of $A$ guarantees that \eqref{Gron eq} can be solved (though not \emph{a priori} in $\ell^\infty$);
more precisely, it guarantees that the geometric series for $(1-A)^{-1}$ converges entry-wise.  To obtain bounds for the
entries of this inverse matrix, it is simplest to use a functional model: under the mapping of sequences to functions
$$
x_k \mapsto \sum_{k=0}^\infty x_k z^k \quad\text{and}\quad b_k \mapsto \sum_{k=0}^\infty b_k z^k,
$$
the matrix $A$ becomes multiplication by $r^K z^K(1-rz)^{-1}$.  In the same way, the entries of $(1-A)^{-1}$ come from
the Taylor coefficients of
$$
a(z):= \frac{1-rz}{1-rz-r^Kz^K}.
$$

Using $e^x\geq 1 +x$ with $x=-\log|rz|$, we see that
\begin{align*}
|1-rz| \geq \bigl(\tfrac{1}{r|z|} -1\bigr) r|z| \geq \frac{\log(K-1)}{K-1} r|z| \geq \log(K-1)\, r^K |z|^{K}
\end{align*}
on the disk $|z| \leq r^{-1}(K-1)^{-1/(K-1)}$.
This shows that $a(z)$ is bounded and analytic on this disk. (Note that the hypothesis $K\geq4$ implies that
$\log(K-1)>1$.)  The inequality \eqref{Gron bound} now follows from the standard Cauchy estimates.
\end{proof}

\subsection{Basic harmonic analysis}\label{ss:basic}
Let $\varphi(\xi)$ be a radial bump function supported in the ball $\{ \xi \in \R^d: |\xi| \leq \tfrac {11}{10} \}$ and equal to
$1$ on the ball $\{ \xi \in \R^d: |\xi| \leq 1 \}$.  For each number $N > 0$, we define the Fourier multipliers
\begin{align*}
\widehat{P_{\leq N} f}(\xi) &:= \varphi(\xi/N) \hat f(\xi)\\
\widehat{P_{> N} f}(\xi) &:= (1 - \varphi(\xi/N)) \hat f(\xi)\\
\widehat{P_N f}(\xi) &:= \psi(\xi/N)\hat f(\xi) := (\varphi(\xi/N) - \varphi(2\xi/N)) \hat f(\xi)
\end{align*}
and similarly $P_{<N}$ and $P_{\geq N}$.  We also define
$$ P_{M < \cdot \leq N} := P_{\leq N} - P_{\leq M} = \sum_{M < N' \leq N} P_{N'}$$
whenever $M < N$.  We will usually use these multipliers when $M$ and $N$ are \emph{dyadic numbers} (that is, of the form $2^n$
for some integer $n$); in particular, all summations over $N$ or $M$ are understood to be over dyadic numbers.  Nevertheless, it
will occasionally be convenient to allow $M$ and $N$ to not be a power of $2$. Note that $P_N$ is not truly a projection; to get
around this, we will occasionally need to use fattened Littlewood-Paley operators:
\begin{equation}\label{PMtilde}
\tilde P_N := P_{N/2} + P_N +P_{2N}.
\end{equation}
These obey $P_N \tilde P_N = \tilde P_N P_N= P_N$.

As with all Fourier multipliers, the Littlewood-Paley operators commute with the propagator $e^{it\Delta}$, as well as with
differential operators such as $i\partial_t + \Delta$. We will use basic properties of these operators many many times,
including

\begin{lemma}[Bernstein estimates]\label{Bernstein}
 For $1 \leq p \leq q \leq \infty$,
\begin{align*}
\bigl\| |\nabla|^{\pm s} P_N f\bigr\|_{L^p_x(\R^d)} &\sim N^{\pm s} \| P_N f \|_{L^p_x(\R^d)},\\
\|P_{\leq N} f\|_{L^q_x(\R^d)} &\lesssim N^{\frac{d}{p}-\frac{d}{q}} \|P_{\leq N} f\|_{L^p_x(\R^d)},\\
\|P_N f\|_{L^q_x(\R^d)} &\lesssim N^{\frac{d}{p}-\frac{d}{q}} \| P_N f\|_{L^p_x(\R^d)}.
\end{align*}
\end{lemma}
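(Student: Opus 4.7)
The plan is to realize each of the three Littlewood-Paley-type operators as convolution with an $L^1$-normalized Schwartz kernel whose $L^r$-norm scales as an explicit power of $N$, and then close by Young's convolution inequality.

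For the low-frequency estimates, write $P_{\leq N} f = K_{\leq N} \ast f$ where $K_{\leq N}(x) = N^d \check\varphi(Nx)$. Since $\check\varphi$ is Schwartz, $\|K_{\leq N}\|_{L^r(\R^d)} = N^{d - d/r}\|\check\varphi\|_{L^r(\R^d)}$. Choosing $r$ by $1 + 1/q = 1/r + 1/p$ and applying Young's inequality gives $\|P_{\leq N} f\|_{L^q_x} \lesssim N^{d/p - d/q}\|f\|_{L^p_x}$. To upgrade the right-hand side to $\|P_{\leq N}f\|_{L^p_x}$ rather than $\|f\|_{L^p_x}$, I would use a fattened multiplier: choose a bump $\tilde\varphi$ equal to $1$ on $\supp\varphi$ and supported slightly larger, so that the associated operator $\tilde P_{\leq N}$ satisfies $\tilde P_{\leq N} P_{\leq N} = P_{\leq N}$. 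Applying the Young estimate above to $\tilde P_{\leq N}$ acting on $P_{\leq N} f$ gives the desired bound. The band-limited version $P_N$ is handled identically with $\psi$ in place of $\varphi$, using the fattened operator $\tilde P_N$ introduced in~\eqref{PMtilde}.

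For the derivative estimate, observe that on the Fourier support of $P_N$ the symbol $|\xi|^{\pm s}$ can be replaced by $|\xi|^{\pm s}\tilde\psi(\xi/N)$ where $\tilde\psi$ is a bump equal to $1$ on $\supp\psi$ and supported in an annulus of comparable size. Thus $|\nabla|^{\pm s} P_N f = m_N^{\pm s} \ast P_N f$ where, by the change of variables $\xi \mapsto N\xi$, the convolution kernel obeys $m_N^{\pm s}(x) = N^{d \pm s} m_1^{\pm s}(Nx)$ with $m_1^{\pm s}$ Schwartz. Young's inequality in $L^1 \ast L^p \to L^p$ then yields the bound $\||\nabla|^{\pm s} P_N f\|_{L^p_x} \lesssim N^{\pm s}\|P_N f\|_{L^p_x}$, and applying the same estimate with $\mp s$ in place of $\pm s$ to $|\nabla|^{\pm s} P_N f$ produces the reverse inequality; together these give the claimed equivalence $\sim$.

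I do not expect any real obstacle here: the entire content is Young's inequality combined with the scaling identity $\|N^d K(N\,\cdot)\|_{L^r} = N^{d-d/r}\|K\|_{L^r}$. The only point that requires a moment of care is the need to invoke a fattened multiplier so that $P_{\leq N}$ (respectively $P_N$) appears on both sides of the Bernstein inequalities rather than the bare $f$; this is exactly the purpose of \eqref{PMtilde}.
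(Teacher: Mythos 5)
Your proof is correct, and there is nothing in the paper to compare it against: Lemma~\ref{Bernstein} is stated there without proof, as a standard fact. Your argument---realizing $P_{\leq N}$, $P_N$, and $|\nabla|^{\pm s}\tilde P_N$ as convolutions with $L^1$-normalized rescaled Schwartz kernels, applying Young's inequality with the scaling identity $\|N^d K(N\cdot)\|_{L^r}=N^{d-d/r}\|K\|_{L^r}$, and using fattened multipliers so that the projection appears on both sides (and so that $|\xi|^{\pm s}$ is smoothly cut off away from the origin, with the reverse inequality obtained by applying the $\mp s$ estimate to $|\nabla|^{\pm s}P_Nf$)---is precisely the standard proof.
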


The next few results provide important tools for dealing with the fractional power appearing
in the nonlinearity.

\begin{lemma}[Fractional chain rule for a $C^1$ function, \cite{chris:weinstein}]\label{F Lip}
Suppose $G\in C^1(\mathbb C)$, $s \in (0,1]$, and $1<p,p_1,p_2<\infty$ such that $\frac 1p=\frac 1{p_1}+\frac 1{p_2}$.   Then,
$$
\||\nabla|^sG(u)\|_p\lesssim \|G'(u)\|_{p_1}\||\nabla|^s u\|_{p_2}.
$$
\end{lemma}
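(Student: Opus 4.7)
\smallskip

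\noindent\emph{Proof proposal for Lemma~\ref{F Lip}.}
The case $s=1$ is just the classical chain rule $\nabla G(u) = G'(u)\nabla u$ combined with H\"older, so I would focus on $s\in(0,1)$. My plan is to replace the Fourier-analytic norm $\||\nabla|^s f\|_p$ by an equivalent \emph{pointwise} square function, so that the Lipschitz-type bound for $G$ can be applied inside the integral. Specifically, I would invoke Strichartz's characterization
\begin{equation*}
\||\nabla|^s f\|_{L^p(\R^d)} \sim \bigl\|\mathcal D_s f\bigr\|_{L^p(\R^d)}, \qquad
\mathcal D_s f(x) := \biggl(\int_{\R^d} \frac{|f(x+y)-f(x)|^2}{|y|^{d+2s}}\,dy\biggr)^{1/2},
\end{equation*}
valid for $1<p<\infty$ and $s\in(0,1)$.

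Applied to $G(u)$, the fundamental theorem of calculus gives, for every $x,y$,
\begin{equation*}
|G(u(x+y))-G(u(x))| \leq |u(x+y)-u(x)|\int_0^1 \bigl|G'\bigl(u(x)+\theta(u(x+y)-u(x))\bigr)\bigr|\,d\theta.
\end{equation*}
The next step is to replace this family of values of $G'$ along the segment in $\C$ by a single quantity controlled by $u$ at the point $x$. Following the Christ--Weinstein strategy, I would partition the $y$-integral into a ``good'' region where $\theta(u(x+y)-u(x))$ stays comparable to (or smaller than) $u(x)$, on which $|G'(\cdot)|$ is essentially $|G'(u(x))|$ (via the continuity of $G'$ one gets a pointwise bound in terms of a sharp maximal function $M^\#(G'\circ u)(x)$), and a ``bad'' region where the increment dominates, on which one absorbs $|G'|$ into the Hardy--Littlewood maximal function $M(G'\circ u)(x)$ at the cost of a small power of $|u(x+y)-u(x)|$ that can be handled via interpolation. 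In both pieces the resulting pointwise bound takes the schematic form
\begin{equation*}
\mathcal D_s G(u)(x) \lesssim M(G'\circ u)(x)\,\mathcal D_s u(x).
\end{equation*}

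Given this pointwise inequality, H\"older with exponents $p_1$ and $p_2$ finishes the job: $\|M(G'\circ u)\|_{p_1}\lesssim\|G'(u)\|_{p_1}$ by the Hardy--Littlewood maximal theorem (since $p_1>1$), and $\|\mathcal D_s u\|_{p_2}\sim\||\nabla|^s u\|_{p_2}$ by the characterization above (since $p_2>1$). The main obstacle is the middle paragraph: one needs the good/bad decomposition to be robust enough that the intermediate values of $G'$ along the chord from $u(x)$ to $u(x+y)$ are genuinely controlled by $G'$ evaluated at $u(x)$, up to maximal-function factors that survive the $L^{p_1}$ estimate. Once that decomposition is in place, the remainder of the argument is essentially a double application of the maximal function theorem together with Strichartz's square-function equivalence.
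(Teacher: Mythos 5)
The paper itself gives no proof of this lemma --- it simply cites \cite{chris:weinstein} --- so your sketch must be measured against that argument, whose broad strategy (a difference-type square-function characterization of $\||\nabla|^s\cdot\|_{L^p}$, the fundamental theorem of calculus along the chord, maximal functions, H\"older) you have correctly identified; the $s=1$ reduction is also fine. But two of your load-bearing steps have genuine gaps. First, the characterization you quote is not valid on the range you claim. For the un-averaged square function $\mathcal D_s f(x)=\bigl(\int_{\R^d}|f(x+y)-f(x)|^2\,|y|^{-d-2s}\,dy\bigr)^{1/2}$ (this is Stein's, not Strichartz's), the inequality $\|\mathcal D_s f\|_{L^p}\lesssim\||\nabla|^s f\|_{L^p}$ fails for $p\leq \tfrac{2d}{d+2s}$: a smooth bump $f$ has $\mathcal D_s f(x)\sim |x|^{-(d+2s)/2}$ as $|x|\to\infty$, which lies in $L^p$ only when $p>\tfrac{2d}{d+2s}$. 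Since the lemma permits any $p_2\in(1,\infty)$, your step $\|\mathcal D_s u\|_{L^{p_2}}\sim\||\nabla|^s u\|_{L^{p_2}}$ is unavailable for small $p_2$. The repair is Strichartz's characterization with \emph{averaged} differences, $\bigl(\int_0^\infty\bigl[r^{-d}\int_{|y|\leq r}|f(x+y)-f(x)|\,dy\bigr]^2\,\tfrac{dr}{r^{1+2s}}\bigr)^{1/2}$, which is comparable to $\||\nabla|^s f\|_{L^p}$ for all $1<p<\infty$ and $0<s<1$, and is the version this argument needs.

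Second, the middle step is the crux, and as written it is an assertion rather than an argument. The chord points $u(x)+\theta\,(u(x+y)-u(x))$ are in general not values of $u$ at all, and mere continuity of $G'$ gives no quantitative control there: for a general $C^1$ function $G$ one cannot dominate $|G'|$ at such points by $M(G'\circ u)(x)$, and certainly not by $M^\#(G'\circ u)(x)$, which measures oscillation rather than size (it vanishes when $G'\circ u$ is constant). What makes the scheme work for the nonlinearities actually used in this paper ($G=F,F_z,F_{\bar z}$, with $|G'(z)|\lesssim|z|^{4/d}$) is homogeneity, not continuity: one has the segment bound $|G'(\theta a+(1-\theta)b)|\lesssim |G'(a)|+|G'(b)|$, which is the hypothesis your good/bad decomposition is silently standing in for. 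Even granting that bound, the contribution in which $G'(u(x+y))$ sits inside the $y$-average multiplied by $|u(x+y)-u(x)|$ does not simply ``absorb'' into $M(G'\circ u)(x)\,\mathcal D_s u(x)$; estimating it (H\"older inside the average together with maximal-function and Fefferman--Stein type estimates) is where the real work of the cited proof lies. So your skeleton matches the standard argument, but the two decisive ingredients --- the correct averaged square-function equivalence and a quantitative chord estimate for $G'$ --- are missing or replaced by soft facts that do not suffice.
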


When the function $G$ is no longer $C^1$, but merely H\"older continuous, we have the following useful chain rule:

\begin{lemma}[Fractional chain rule for a H\"older continuous function, \cite{thesis:art}]\label{fdfp}
\ Let $G$ be a H\"older continuous function of order $0<\alpha<1$.  Then, for every $0<s<\alpha$, $1<p<\infty$,
and $\tfrac{s}{\alpha}<\sigma<1$ we have
\begin{align}\label{fdfp2}
\bigl\| |\nabla|^s G(u)\bigr\|_p
\lesssim \bigl\||u|^{\alpha-\frac{s}{\sigma}}\bigr\|_{p_1} \bigl\||\nabla|^\sigma u\bigr\|^{\frac{s}{\sigma}}_{\frac{s}{\sigma}p_2},
\end{align}
provided $\tfrac{1}{p}=\tfrac{1}{p_1} +\tfrac{1}{p_2}$ and $(1-\frac s{\alpha \sigma})p_1>1$.
\end{lemma}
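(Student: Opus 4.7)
The plan is to combine Hölder continuity of $G$ with a square-function characterization of the homogeneous Sobolev seminorm and a maximal-function decoupling.

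First I would establish the pointwise interpolation
\[
|G(a)-G(b)|\lesssim |a-b|^{s/\sigma}\bigl(|a|^{\alpha-s/\sigma}+|b|^{\alpha-s/\sigma}\bigr), \qquad a,b\in\C.
\]
This follows by combining the Hölder hypothesis $|G(a)-G(b)|\lesssim |a-b|^\alpha$ with the bound $|G(a)-G(b)|\lesssim |a|^\alpha+|b|^\alpha$ (legitimate after subtracting $G(0)$, which does not alter $|\nabla|^s G(u)$), writing $|a-b|^\alpha=|a-b|^{s/\sigma}|a-b|^{\alpha-s/\sigma}$, and invoking the elementary estimate $|a-b|^{\alpha-s/\sigma}\lesssim |a|^{\alpha-s/\sigma}+|b|^{\alpha-s/\sigma}$, which is licit because $\alpha-s/\sigma>0$.

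Next I would apply the Strichartz-type characterization
\[
\bigl\||\nabla|^s f\bigr\|_p \lesssim \Bigl\|\Bigl(\int_{\R^d}|f(\cdot+y)-f(\cdot)|^2\tfrac{dy}{|y|^{d+2s}}\Bigr)^{1/2}\Bigr\|_p \qquad (0<s<1,\ 1<p<\infty)
\]
to $f=G(u)$ and substitute the pointwise bound. The integrand then contains the joint factor $(|u(x+y)|+|u(x)|)^{2(\alpha-s/\sigma)}|u(x+y)-u(x)|^{2s/\sigma}$. To separate the two roles played by $u$ I would dominate the cross factor by a Hardy-Littlewood-type (fractional) maximal function of $|u|^{\alpha-s/\sigma}$ evaluated at $x$, and then apply Hölder in $x$ with exponents $p_1,p_2$. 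The first factor emerges as $\||u|^{\alpha-s/\sigma}\|_{p_1}$; the bound $(1-s/(\alpha\sigma))p_1>1$, equivalent to $p_1(\alpha-s/\sigma)/\alpha>1$, is precisely what is needed for the relevant maximal operator to be bounded in $L^{p_1}$.

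It remains to identify the second factor, namely the $L^{p_2}$-norm of $\bigl(\int |u(\cdot+y)-u(\cdot)|^{2s/\sigma}|y|^{-(d+2s)}\,dy\bigr)^{1/2}$, with $\||\nabla|^\sigma u\|^{s/\sigma}_{(s/\sigma)p_2}$. Using Jensen/Hölder in $y$ to reconcile the inner power $2s/\sigma$ with $2$ and the weight $|y|^{-(d+2s)}$ with $|y|^{-(d+2\sigma)}$ (consistent because $s/\sigma<1$), together with the $\sigma$-level analogue of the same square-function characterization at exponent $(s/\sigma)p_2$, closes the estimate. The hard part will be the decoupling step: $(|u(x+y)|+|u(x)|)^{\alpha-s/\sigma}$ genuinely entangles two spatial points, and converting the joint $(x,y)$-integral into a product of $x$-integrals requires a maximal-function domination whose $L^{p_1}$-boundedness rests on the sharp hypothesis $(1-s/(\alpha\sigma))p_1>1$. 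The reconciliation of weights and inner powers in the final step is also delicate and uses all the room furnished by the assumption $s/\alpha<\sigma<1$.
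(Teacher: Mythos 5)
Note first that the paper does not prove this lemma: it is quoted from \cite{thesis:art}, where the argument is a Littlewood--Paley, scale-by-scale one (mean-zero kernels, two competing bounds per dyadic frequency, maximal functions arising from genuine averages, then summation). Your route through the Stein--Strichartz difference square function is different, and its opening moves are fine: the pointwise interpolation bound is correct, and the direction $\||\nabla|^s f\|_p\lesssim\|(\int|f(\cdot+y)-f(\cdot)|^2|y|^{-d-2s}dy)^{1/2}\|_p$ does hold for all $1<p<\infty$ (one can see this by dominating the Littlewood--Paley square function pointwise by the difference square function). But the two steps you yourself flag as ``the hard part'' and ``delicate'' are genuine gaps, not technicalities. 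The decoupling cannot be done as described: $|u(x+y)|^{\alpha-s/\sigma}$ is a pointwise value at distance $|y|$ from $x$, with $|y|$ ranging over all of $\R^d$, and no Hardy--Littlewood or fractional maximal function of $|u|^{\alpha-s/\sigma}$ at $x$ dominates it; maximal functions control averages over balls centred at $x$, so one must first integrate in $y$ at a fixed scale --- but that same $y$-integration is what has to produce the difference square-function factor, and performing both simultaneously is exactly the content of the lemma. Relatedly, your reading of the hypothesis is off: boundedness of $M(|u|^{\alpha-s/\sigma})$ on $L^{p_1}$ only needs $p_1>1$; the condition $(1-\frac{s}{\alpha\sigma})p_1>1$ is what makes $M$ bounded on $L^{(1-\frac{s}{\alpha\sigma})p_1}$, the space containing $|u|^{\alpha}$, which is how it enters the actual proof (a Jensen upgrade of the power inside averages). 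That you land on the wrong operator is a sign the hypothesis has not actually been located in your argument.

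The final ``reconciliation'' step also fails as stated. To pass from $\int|\delta_yu(x)|^{2s/\sigma}|y|^{-d-2s}\,dy$ to $\bigl(\int|\delta_yu(x)|^{2}|y|^{-d-2\sigma}\,dy\bigr)^{s/\sigma}$ by H\"older/Jensen in $y$, the complementary factor is $\bigl(\int_{\R^d}|y|^{-d}dy\bigr)^{1-s/\sigma}=\infty$: both quantities have identical scaling, so there is no room at all --- the room from $s/\alpha<\sigma<1$ was already spent splitting the H\"older exponent $\alpha=\frac{s}{\sigma}+(\alpha-\frac{s}{\sigma})$ --- and in fact the fiberwise inequality is false (take $|\delta_yu(x)|\sim|y|^\sigma(\log 1/|y|)^{-\beta}$ with $1<2\beta\le\sigma/s$). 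Moreover, even granting it, identifying $\|\mathcal D_\sigma u\|_{(s/\sigma)p_2}$ with $\||\nabla|^\sigma u\|_{(s/\sigma)p_2}$ uses the \emph{converse} direction of the difference-square-function characterization, which requires the exponent to exceed $\max\{1,\frac{2d}{d+2\sigma}\}$ (for a bump function the difference square function decays only like $|x|^{-(d+2\sigma)/2}$), a restriction not implied by the lemma's hypotheses, where $(s/\sigma)p_2$ may be small. So the proposal, as it stands, does not prove the lemma; to repair it you would essentially be forced back to the dyadic, scale-by-scale structure of the cited proof, where the $y$-averages at scale $N^{-1}$ legitimately produce maximal functions and two bounds per scale are interpolated before summing in $N$.
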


\begin{corollary}\label{cor:s deriv}
Let $0\leq s<1+\frac 4d$.  Then, on any spacetime slab $I\times\R^d$ we have
\begin{align*}
\bigl\||\nabla|^s F(u)\bigr\|_{L_{t,x}^{\frac{2(d+2)}{d+4}}}
&\lesssim \bigl\||\nabla|^s u\bigr\|_{L_{t,x}^{\frac{2(d+2)}d}} \|u\|_{L_{t,x}^{\frac{2(d+2)}d}}^{\frac 4d}\\
\intertext{and}
\bigl\||\nabla|^s F(u)\bigr\|_{L_t^\infty L_x^{\frac{2r}{r+4}}}
&\lesssim \bigl\||\nabla|^s u\bigr\|_{L_t^\infty L_x^2} \|u\|_{L_t^\infty L_x^{\frac{2r}d}}^{\frac 4d},
\end{align*}
for any $\max\{d,4\}\leq r\leq \infty$.
\end{corollary}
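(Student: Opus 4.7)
I would argue by splitting at $s=1$, leveraging throughout the pointwise bound $|F'(u)|\lesssim|u|^{4/d}$ coming from $F(u)=\mu|u|^{4/d}u$ together with the complex chain rule $\partial_j F(u) = F_z(u)\partial_j u + F_{\bar z}(u)\partial_j\bar u$.

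For $0\leq s\leq 1$, a direct application of Lemma~\ref{F Lip} with $G=F$ and H\"older exponents $p_1=(d+2)/2$, $p_2=2(d+2)/d$ (whose reciprocals sum to $(d+4)/(2(d+2))$) gives
$$
\bigl\||\nabla|^s F(u)\bigr\|_{L_{t,x}^{2(d+2)/(d+4)}}
\lesssim \bigl\||u|^{4/d}\bigr\|_{L_{t,x}^{(d+2)/2}} \bigl\||\nabla|^s u\bigr\|_{L_{t,x}^{2(d+2)/d}}
= \|u\|_{L_{t,x}^{2(d+2)/d}}^{4/d} \bigl\||\nabla|^s u\bigr\|_{L_{t,x}^{2(d+2)/d}}.
$$
The $L_t^\infty L_x^{2r/(r+4)}$ estimate follows by the same reasoning applied purely in the spatial variable with $p_1 = r/2$, $p_2 = 2$; the restriction $r \geq \max\{d,4\}$ enters here to keep $r/2 > 1$ as demanded by Lemma~\ref{F Lip}.

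For $1 < s < 1 + 4/d$, I would write $|\nabla|^s F(u) = |\nabla|^{s-1}|\nabla| F(u)$, recognize $|\nabla| F(u)$ (via the chain rule) as a sum of terms pointwise bounded by $|u|^{4/d}|\nabla u|$, and apply the Kato--Ponce fractional Leibniz rule to $|\nabla|^{s-1}[F'(u)\nabla u]$. This generates two pieces: one is the favorable term $\|F'(u)\|_{L_{t,x}^{(d+2)/2}}\||\nabla|^s u\|_{L_{t,x}^{2(d+2)/d}}$, controlled exactly as in the first case; the other is $\||\nabla|^{s-1}F'(u)\|_{L_{t,x}^{q_1}}\|\nabla u\|_{L_{t,x}^{q_2}}$. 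For the cross term, apply Lemma~\ref{fdfp} to $|\cdot|^{4/d}$, which is H\"older of order $\alpha = \min\{1,4/d\}$ (when $d\leq 4$ one may instead use Lemma~\ref{F Lip} directly on $F'$, iterating it once over the narrow range $s\in(2,1+4/d)$ in dimension three). The hypothesis $s - 1 < 4/d$ is exactly what permits a choice of intermediate smoothness parameter $\sigma$ with $(s-1)d/4 < \sigma < 1$; the resulting $\||\nabla|^\sigma u\|$ and $\|\nabla u\|$ factors are then interpolated between $\|u\|$ and $\||\nabla|^s u\|$ in $L_{t,x}^{2(d+2)/d}$ to reassemble the target RHS. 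The $L_t^\infty L_x^{2r/(r+4)}$ bound proceeds identically with $r$-dependent spatial exponents.

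The main obstacle is the exponent bookkeeping when $s > 1$, which sits at the very edge of Lemma~\ref{fdfp}: the condition $s < 1 + 4/d$ corresponds precisely to the simultaneous requirements $\sigma < 1$ and $\sigma > (s-1)/\alpha$, and the auxiliary condition $(1-s/(\alpha\sigma))p_1 > 1$ together with the H\"older balance must be checked against the exponents chosen from the Kato--Ponce step. A secondary subtlety is the dichotomy on $d$: for $d \leq 4$ one can (and perhaps should) run Lemma~\ref{F Lip} on $F'$ since $|\cdot|^{4/d}$ is Lipschitz there, whereas for $d \geq 5$ one is forced into Lemma~\ref{fdfp}. Once all exponents are in hand, the Sobolev interpolation eliminating the auxiliary $\sigma$ is routine and produces no extraneous norms.
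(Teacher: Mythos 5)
Your proposal is correct and follows essentially the same route as the paper: Lemma~\ref{F Lip} for $0\leq s\leq 1$, and for $1<s<1+\frac4d$ the chain rule plus the fractional product (Kato--Ponce) rule, with the cross term $\||\nabla|^{s-1}F_z(u)\|$ handled by Lemma~\ref{fdfp} applied to the H\"older-$\frac4d$ function $F_z$ and the auxiliary $\sigma$ removed by interpolation, exactly as in the paper. Your added care for $d\leq 4$ (using Lemma~\ref{F Lip} on $F'$ where $\frac4d\geq 1$) is a minor refinement of the same step rather than a different argument.
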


\begin{proof}

Fix a compact interval $I$.  Throughout the proof, all spacetime estimates will be on $I\times\R^d$.

We begin with the first claim.  For $0<s\leq 1$, this is an easy consequence of Lemma~\ref{F Lip}.
We now address the case $1<s<1+\frac 4d$.  By the chain rule and the fractional product rule, we estimate
\begin{align*}
\bigl\||\nabla|^s F(u)\bigr\|_{L_{t,x}^{\frac{2(d+2)}{d+4}}}
&\lesssim \bigl\||\nabla|^{s-1} \bigl( \nabla u F_z(u) + \nabla \bar u F_{\bar z}(u) \bigr) \bigr\|_{L_{t,x}^{\frac{2(d+2)}{d+4}}}\\
&\lesssim \bigl\||\nabla|^s u\bigr\|_{L_{t,x}^{\frac{2(d+2)}d}} \|u\|_{L_{t,x}^{\frac{2(d+2)}d}}^{\frac 4d}\\
&\quad + \|\nabla u\|_{L_{t,x}^{\frac{2(d+2)}d}}
        \Bigl[\bigl\||\nabla|^{s-1} F_z(u)\bigr\|_{L_{t,x}^{\frac{d+2}2}} + \bigl\||\nabla|^{s-1} F_{\bar z}(u)\bigr\|_{L_{t,x}^{\frac{d+2}2}}\Bigr].
\end{align*}
The claim will follow from this, once we establish
\begin{align}\label{fractional}
\bigl\||\nabla|^{s-1} F_z(u)\bigr\|_{L_{t,x}^{\frac{d+2}2}} + \bigl\||\nabla|^{s-1} F_{\bar z}(u)\bigr\|_{L_{t,x}^{\frac{d+2}2}}
\lesssim \bigl\||\nabla|^\sigma u\bigr\|_{L_{t,x}^{\frac{2(d+2)}d}}^{\frac{s-1}\sigma} \|u\|_{L_{t,x}^{\frac{2(d+2)}d}}^{\frac 4d-\frac{s-1}\sigma}
\end{align}
for some $\frac{d(s-1)}4 <\sigma<1$.  Indeed, one simply has to note that by interpolation,
\begin{align*}
\bigl\||\nabla|^\sigma u\bigr\|_{L_{t,x}^{\frac{2(d+2)}d}}
&\lesssim \bigl\||\nabla|^s u\bigr\|_{L_{t,x}^{\frac{2(d+2)}d}}^{\frac \sigma s} \|u\|_{L_{t,x}^{\frac{2(d+2)}d}}^{1-\frac \sigma s}\\
\intertext{and}
\|\nabla u\|_{L_{t,x}^{\frac{2(d+2)}d}}
&\lesssim \bigl\||\nabla|^s u\bigr\|_{L_{t,x}^{\frac{2(d+2)}d}}^{\frac 1s} \|u\|_{L_{t,x}^{\frac{2(d+2)}d}}^{1-\frac 1s}.
\end{align*}

To derive \eqref{fractional}, we remark
that $F_z$ and $F_{\bar z}$ are H\"older continuous functions of
order $\tfrac 4d$ and use Lemma~\ref{cor:s deriv} (with
$\alpha:=\tfrac 4d$ and $s:=s-1$).

We now turn to the second claim.  Note that the condition $r\geq 4$ simply insures that $\tfrac{2r}{r+4}\geq 1$.
For $0<s\leq 1$, the claim follows immediately from Lemma~\ref{F Lip}.
Let us consider the case $1<s<1+\frac 4d$.  By the chain rule and the fractional product rule,
\begin{align*}
\bigl\||\nabla|^s F(u)\bigr\|_{L_t^\infty L_x^{\frac{2r}{r+4}}}
&\lesssim \bigl\||\nabla|^{s-1} \bigl( \nabla u F_z(u) + \nabla \bar u F_{\bar z}(u) \bigr) \bigr\|_{L_t^\infty L_x^{\frac{2r}{r+4}}}\\
&\lesssim \bigl\||\nabla|^s u\bigr\|_{L_t^\infty L_x^2} \|u\|_{L_t^\infty L_x^{\frac{2r}d}}^{\frac 4d}\\
&\quad + \|\nabla u\|_{L_t^\infty L_x^{\frac{2rs}{r+(s-1)d}}}
      \bigl\||\nabla|^{s-1} O\bigr(|u|^{\frac 4d}\bigl)\bigr\|_{L_t^\infty L_x^{\frac{2rs}{(r-d)(s-1)+4s}}}.
\end{align*}
By interpolation,
$$
\|\nabla u\|_{L_t^\infty L_x^{\frac{2rs}{r+(s-1)d}}}
\lesssim \bigl\||\nabla|^s u\bigr\|_{L_t^\infty L_x^2}^{\frac 1s} \|u\|_{L_t^\infty L_x^{\frac{2r}d}}^{1-\frac 1s}.
$$
Thus, the claim will follow once we establish
\begin{equation}\label{fcr goal}
\bigl\||\nabla|^{s-1} O\bigr(|u|^{\frac 4d}\bigl)\bigr\|_{L_t^\infty L_x^{\frac{2rs}{(r-d)(s-1)+4s}}}
\lesssim \bigl\||\nabla|^s u\bigr\|_{L_t^\infty L_x^2}^{1-\frac 1s} \|u\|_{L_t^\infty L_x^{\frac{2r}d}}^{\frac 4d +\frac 1s -1}.
\end{equation}
Applying Lemma~\ref{fdfp}, we obtain
\begin{align*}
\bigl\||\nabla|^{s-1} O\bigr(|u|^{\frac 4d}\bigl)\bigr\|_{L_t^\infty L_x^{\frac{2rs}{(r-d)(s-1)+4s}}}
&\lesssim \bigl\||\nabla|^\sigma  u\bigr\|_{L_t^\infty L_x^{\frac{2rs}{sd+\sigma(r-d)}}}^{\frac{s-1}\sigma}
    \|u\|_{L_t^\infty L_x^{\frac{2r}d}}^{\frac 4d-\frac{s-1}\sigma}
\end{align*}
for any $\tfrac{d(s-1)}4<\sigma<1$.  The inequality \eqref{fcr goal} now follows from
$$
\bigl\||\nabla|^\sigma  u\bigr\|_{L_t^\infty L_x^{\frac{2rs}{sd+\sigma(r-d)}}}
\lesssim \bigl\||\nabla|^s u\bigr\|_{L_t^\infty L_x^2}^{\frac{\sigma}s} \|u\|_{L_t^\infty L_x^{\frac{2r}d}}^{1-\frac{\sigma}s},
$$
which is a consequence of interpolation.

Note that the restriction $r\geq d$ guarantees that certain Lebesgue exponents appearing above lie in the range $[1,\infty]$.
In fact, one may relax this restriction a little, but we will not need this here.
\end{proof}

\subsection{Strichartz estimates}

Naturally, everything that we do for the nonlinear Schr\"odinger equation builds on basic properties of the linear propagator
$e^{it\Delta}$.

From the explicit formula
$$ e^{it\Delta} f(x) = \frac{1}{(4\pi i t)^{d/2}} \int_{\R^d} e^{i|x-y|^2/4t} f(y)\, dy,$$
we deduce the standard dispersive inequality
\begin{equation}\label{dispersive}
\| e^{it\Delta} f \|_{L^\infty(\R^d)} \lesssim \frac{1}{|t|^{d/2}} \| f \|_{L^1(\R^d)}
\end{equation}
for all $t\neq 0$.  Interpolating between this and the conservation of mass, gives
\begin{equation}\label{dispersive-p}
\| e^{it\Delta} f \|_{L^p(\R^d)} \lesssim |t|^{\frac dp-\frac d2} \| f \|_{L^{p'}(\R^d)}
\end{equation}
for all $t\neq 0$ and $2\leq p\leq \infty$.  Here $p'$ is the dual of $p$, that is, $\frac 1p+\frac 1{p'}=1$.

Finer bounds on the (frequency localized) linear propagator can be derived using stationary phase:

\begin{lemma}[Kernel estimates]\label{lr:propag est L}
For any $m\geq 0$, the kernel of the linear propagator obeys the following estimates:
\begin{equation}\label{lr:large times}
\bigr| ( P_N e^{it\Delta} )(x,y) \bigl| \lesssim_m \begin{cases}
    |t|^{-d/2} & :\ |x-y| \sim N|t| \\[0.5ex]
    \displaystyle \frac{N^d}{|N^2t|^{m} \langle N|x-y| \rangle^{m} } \quad &:\ \text{otherwise}
    \end{cases}
\end{equation}
for $|t|\geq N^{-2}$ and
\begin{equation}\label{lr:small times}
\bigr| ( P_N e^{it\Delta} )(x,y) \bigl| \lesssim_m N^d \bigl\langle N|x-y| \bigr\rangle^{-m}
\end{equation}
for $|t|\leq  N^{-2}$.
\end{lemma}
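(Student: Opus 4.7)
The plan is to analyze the kernel as an oscillatory integral and split into regimes according to whether the critical point of the phase lies in the support of the cutoff. After the substitution $\xi = N\eta$ and setting $s := N^2 t$, $u := N(x-y)$, we have
\begin{equation*}
(P_N e^{it\Delta})(x,y) = N^d K(s,u), \qquad K(s,u) := \frac{1}{(2\pi)^d}\int_{\R^d} e^{i\Phi(\eta)}\psi(\eta)\,d\eta,
\end{equation*}
where $\Phi(\eta) := u\cdot\eta - s|\eta|^2$ and $\psi$ is supported in the annulus $|\eta|\sim 1$. The target bounds reduce to showing: (i) $|K(s,u)|\lesssim_m \langle u\rangle^{-m}$ when $|s|\leq 1$; (ii) $|K(s,u)|\lesssim |s|^{-d/2}$ when $|s|\geq 1$ and $|u|\sim |s|$; and (iii) $|K(s,u)|\lesssim_m |s|^{-m}\langle u\rangle^{-m}$ when $|s|\geq 1$ and $|u|\not\sim |s|$.

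For (i), the trivial bound $|K|\lesssim 1$ handles bounded $|u|$, while for $|u|$ sufficiently large we have $|\nabla\Phi(\eta)| = |u - 2s\eta|\gtrsim |u|$ on $\supp\psi$ (since $|s|\leq 1$ and $|\eta|\lesssim 1$), so iterated integration by parts against $L := -i|\nabla\Phi|^{-2}\,\nabla\Phi\cdot\nabla$ gives $|K|\lesssim_m |u|^{-m}$. The critical point of $\Phi$ in the large-time cases is $\eta_\ast := u/(2s)$, which lies in $\supp\psi$ precisely when $|u|\sim |s|$. In the resonant regime (ii), the Hessian of $\Phi$ equals $-2sI$ with determinant of size $|s|^d$, so the standard stationary phase expansion yields $|K|\lesssim |s|^{-d/2}$, which translates to the $|t|^{-d/2}$ bound on the original kernel. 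In the non-resonant regime (iii), an elementary case analysis (separating $|u|\leq c|s|$ from $|u|\geq C|s|$) gives $|\nabla\Phi(\eta)|\gtrsim |s|+|u|$ uniformly on $\supp\psi$; a $2m$-fold integration by parts then produces
\begin{equation*}
|K(s,u)|\lesssim_m (|s|+|u|)^{-2m}\lesssim_m |s|^{-m}\langle u\rangle^{-m},
\end{equation*}
the last step using $|s|\geq 1$ together with the AM--GM bound $(|s|+|u|)^2\gtrsim |s|\cdot|u|$ (when $|u|\geq 1$) and $|s|+|u|\geq |s|$ (when $|u|\leq 1$).

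The main technical point is the bookkeeping in the non-resonant iterated integration by parts: each application of the transpose $L^\top$ produces, alongside derivatives of $\psi$, factors involving the Hessian of $\Phi$ (of size $|s|$) divided by $|\nabla\Phi|^2$. These are harmless provided $|s|/|\nabla\Phi|^2\lesssim (|s|+|u|)^{-1}$, which follows from $|\nabla\Phi|^2\gtrsim (|s|+|u|)^2\geq |s|(|s|+|u|)$. With this verified, each integration by parts genuinely gains a factor $(|s|+|u|)^{-1}$, and the three regimes combine to give the stated kernel estimates.
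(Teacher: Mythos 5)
Your proposal is correct and is essentially the argument the paper has in mind: the paper states this lemma with no written proof beyond the remark that it follows from stationary phase, and your rescaled oscillatory-integral analysis (non-stationary phase/integration by parts off the critical point, stationary phase with Hessian $-2sI$ when $|u|\sim|s|$) is exactly that standard argument, with the bookkeeping done correctly.
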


We also record the following standard Strichartz estimates:

\begin{lemma}[Strichartz]\label{L:strichartz}  Let $I$ be an interval, let $t_0 \in I$, and let $u_0 \in L^2_x(\R^d)$
and $f \in L^{2(d+2)/(d+4)}_{t,x}(I \times \R^d)$, with $d\geq 3$.  Then, the function $u$ defined by
$$ u(t) := e^{i(t-t_0)\Delta} u_0 - i \int_{t_0}^t e^{i(t-t')\Delta} f(t')\ dt'$$
obeys the estimate
$$
\|u \|_{C^0_t L^2_x} + \| u \|_{L^{\frac{2(d+2)}d}_{t,x}} + \|u \|_{L^2_t L^{\frac{2d}{d-2}}_x}
    \lesssim \| u_0 \|_{L^2_x} + \|f\|_{L^{\frac{2(d+2)}{d+4}}_{t,x}},
$$
where all spacetime norms are over $I\times\R^d$.
\end{lemma}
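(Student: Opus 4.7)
The plan is to follow the classical $TT^*$ argument of Ginibre--Velo, augmented at the endpoint by the bilinear interpolation scheme of Keel--Tao. It suffices to establish the homogeneous bound $\|e^{it\Delta}u_0\|_{L^q_tL^r_x(\R\times\R^d)}\lesssim \|u_0\|_{L^2_x}$ for each of the three admissible pairs appearing on the left-hand side of the claimed estimate: $(\infty,2)$, which is immediate from mass conservation; the diagonal pair $(\tfrac{2(d+2)}d,\tfrac{2(d+2)}d)$; and the endpoint $(2,\tfrac{2d}{d-2})$, available precisely because $d\geq 3$.

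For the diagonal pair I would use $TT^*$: setting $Tu_0 := e^{it\Delta}u_0$, one has $TT^*g(t) = \int_\R e^{i(t-t')\Delta}g(t')\,dt'$, and the dispersive estimate \eqref{dispersive-p} with $p = \tfrac{2(d+2)}d$ yields the pointwise-in-$t$ bound
$$
\|TT^*g(t)\|_{L^{2(d+2)/d}_x}\lesssim \int_\R |t-t'|^{-d/(d+2)}\|g(t')\|_{L^{2(d+2)/(d+4)}_x}\,dt'.
$$
Hardy--Littlewood--Sobolev in time then closes the estimate via $\|T\|^2 = \|TT^*\|$. For the endpoint pair $(2,\tfrac{2d}{d-2})$, which falls outside the Hardy--Littlewood--Sobolev range, I would invoke the Keel--Tao argument: dyadically decompose the bilinear pairing $\int\!\int\langle e^{i(t-s)\Delta}g(s),\,h(t)\rangle\,ds\,dt$ in $\log_2|t-s|$, interpolate the dispersive bound against the trivial $L^2_x\to L^2_x$ bound to obtain a small off-diagonal gain on each dyadic slab, and sum using an atomic decomposition in time.

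Passing from the homogeneous to the inhomogeneous retarded bound is then standard. Composing the homogeneous estimate at the target pair $(q,r)$ with the dual of the homogeneous estimate at the diagonal pair produces the untruncated version
$$
\Bigl\|\int_\R e^{i(t-t')\Delta}f(t')\,dt'\Bigr\|_{L^q_tL^r_x}\lesssim \|f\|_{L^{2(d+2)/(d+4)}_{t,x}},
$$
and the Christ--Kiselev lemma upgrades this to the retarded form $\int_{t_0}^t$ whenever the target time exponent strictly exceeds the source time exponent $\tfrac{2(d+2)}{d+4}$; this holds for all three of our target pairs, including the endpoint $q=2$ (since $2>\tfrac{2(d+2)}{d+4}$). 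The main obstacle is the endpoint homogeneous estimate itself: the elementary $TT^*/$Hardy--Littlewood--Sobolev scheme stops short of $q=2$, and closing it genuinely requires the Keel--Tao dyadic bilinear decomposition. Everything else is a routine consequence of the dispersive inequality already proved in the excerpt and standard duality.
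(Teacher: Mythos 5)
Your proposal is correct and follows essentially the same route the paper relies on: the paper's proof is simply a citation to Ginibre--Velo/Strichartz for the non-endpoint estimates (the $TT^*$ plus Hardy--Littlewood--Sobolev scheme you describe) and to Keel--Tao for the endpoint $(2,\tfrac{2d}{d-2})$, with the retarded version handled by standard duality/Christ--Kiselev considerations exactly as you indicate. The exponent checks in your outline (kernel decay $|t-t'|^{-d/(d+2)}$, the HLS scaling, and $2>\tfrac{2(d+2)}{d+4}$ for Christ--Kiselev) are all accurate, so there is nothing to add.
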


\begin{proof}
See, for example, \cite{gv:Strichartz, Strichartz}.  For the endpoint see \cite{tao:keel}.
\end{proof}

We will also need three variants of the Strichartz inequality.   First, we observe a weighted Strichartz estimate,
which exploits the spherical symmetry heavily in order to obtain spatial decay. It is very useful in regions of space far from
the origin $x=0$.

\begin{lemma}[Weighted Strichartz]\label{L:wes}  Let $I$ be an interval, let $t_0 \in I$, and let $u_0 \in L^2_x(\R^d)$
and $f \in L^{2(d+2)/(d+4)}_{t,x}(I \times \R^d)$ be spherically symmetric.  Then, the function $u$ defined by
$$ u(t) := e^{i(t-t_0)\Delta} u_0 - i \int_{t_0}^t e^{i(t-t')\Delta} f(t')\ dt'$$
obeys the estimate
$$ \bigl\| |x|^{\frac{2(d-1)}q} u \bigr\|_{L^q_t L^{\frac{2q}{q-4}}_x(I \times \R^d)}
\lesssim \| u_0 \|_{L^2_x(\R^d)} + \|f\|_{L^{\frac{2(d+2)}{d+4}}_{t,x}(I \times \R^d)}$$
for all $4\leq q\leq \infty$.
\end{lemma}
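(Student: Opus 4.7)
The plan is to reduce the lemma to the homogeneous weighted estimate
$$\bigl\| |x|^{2(d-1)/q} e^{it\Delta} u_0 \bigr\|_{L^q_t L^{2q/(q-4)}_x} \lesssim \|u_0\|_{L^2_x}$$
for radial $u_0$ and $4 \leq q \leq \infty$.  Given this, the full inhomogeneous version stated in Lemma~\ref{L:wes} will follow from a $TT^*$-type bilinear pairing combined with the Christ--Kiselev lemma: pairing the dual Strichartz estimate $\|\int e^{-is\Delta} f(s)\,ds\|_{L^2_x} \lesssim \|f\|_{L^{2(d+2)/(d+4)}_{t,x}}$ from Lemma~\ref{L:strichartz} against the homogeneous weighted estimate produces the retarded bound with full-line integration, and Christ--Kiselev then truncates to $\int_{t_0}^t$.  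This is permissible because $q\ge 4 > 2(d+2)/(d+4)$ for every $d\ge 3$.

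For the homogeneous estimate, I would interpolate in $q$ between the endpoints $q=\infty$ and $q=4$ via Stein's interpolation theorem applied to the analytic family of multiplier operators $u_0 \mapsto |x|^\alpha e^{it\Delta} u_0$: the $q=\infty$ case is immediate from mass conservation, and the interpolated weight $|x|^{2(d-1)/q}$ and integrability $2q/(q-4)$ emerge exactly as prescribed.  The heart of the matter is thus the $q=4$ endpoint
$$\bigl\| |x|^{(d-1)/2} e^{it\Delta} u_0 \bigr\|_{L^4_t L^\infty_x} \lesssim \|u_0\|_{L^2_x}$$
for radial $u_0$ (where $L^\infty_x = L^\infty_r$).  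Here I plan to exploit the Hankel-transform representation
$$(e^{it\Delta} u_0)(r) = c_d\, t^{-d/2} e^{ir^2/(4t)} \int_0^\infty e^{is^2/(4t)} u_0(s) \frac{J_{(d-2)/2}(rs/(2t))}{(rs/(2t))^{(d-2)/2}} s^{d-1}\,ds,$$
together with the uniform Bessel bound $|J_\nu(z)|\lesssim \min(z^\nu, z^{-1/2})$.  In the far-field regime $rs/|t|\gtrsim 1$, the asymptotic $J_\nu(z)\sim \sqrt{2/(\pi z)}\cos(z-\nu\pi/2-\pi/4)$ reveals that $r^{(d-1)/2}(e^{it\Delta} u_0)(r)$ is essentially a one-dimensional free Schr\"odinger evolution acting on data with Fourier profile $\hat u_0(\rho)\rho^{(d-1)/2}$, whose $L^2(d\rho)$ norm is comparable to $\|u_0\|_{L^2_x(\R^d)}$; the $1$D endpoint Strichartz $\|e^{it\partial_x^2} g\|_{L^4_t L^\infty_x}\lesssim \|g\|_{L^2}$ closes the estimate.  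In the near-field regime $rs/|t|\lesssim 1$, the kernel is uniformly of size $|t|^{-d/2}$, and the resulting integral is handled directly by Cauchy--Schwarz.

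The main obstacle will be stitching the two regimes together rigorously while controlling the error terms in the Bessel asymptotic, particularly in the transition zone $rs\sim|t|$.  A more self-contained alternative, which avoids explicit asymptotic analysis of Bessel functions, is to Littlewood--Paley decompose $u_0 = \sum_N P_N u_0$, obtain the weighted $L^4_t L^\infty_x$ bound on each piece $P_N e^{it\Delta} u_0$ using the kernel estimates of Lemma~\ref{lr:propag est L} together with Bernstein and the radiality (which transfers $L^\infty_x$ to $L^\infty_r$), and sum dyadically via the $L^2$-orthogonality of the frequency blocks.
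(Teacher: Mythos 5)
Your overall architecture coincides with the paper's: treat $q=\infty$ as the trivial Strichartz endpoint, prove the $q=4$ case, interpolate, and obtain the retarded inhomogeneous term by pairing with the dual Strichartz estimate of Lemma~\ref{L:strichartz} and invoking Christ--Kiselev (legitimate since $\tfrac{2(d+2)}{d+4}<2\leq q$). The gap lies in the one step that carries all the content, namely the $q=4$ estimate. The paper does not attack $\bigl\| |x|^{\frac{d-1}2}e^{it\Delta}u_0\bigr\|_{L^4_tL^\infty_x}\lesssim \|u_0\|_{L^2_x}$ head-on; it proves the weighted dispersive bound $\bigl\| |x|^{\frac{d-1}2}e^{it\Delta}|y|^{\frac{d-1}2}g\bigr\|_{L^\infty_x}\lesssim |t|^{-1/2}\|g\|_{L^1_x}$ for radial $g$, which follows from the single pointwise kernel bound $\bigl|[e^{it\Delta}P_\rad](x,y)\bigr|\lesssim |t|^{-d/2}\bigl(|x||y|/|t|\bigr)^{-\frac{d-1}2}$, valid for \emph{all} $x,y$ because $|J_{\frac{d-2}2}(z)|\lesssim z^{-1/2}$ holds uniformly in $z>0$ (in the near field this bound is trivially weaker than $|t|^{-d/2}$, so no regime splitting is needed). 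The $L^4_tL^\infty_x$ endpoint then follows from the standard $TT^*$ plus Hardy--Littlewood--Sobolev scheme, with no need to capture the oscillation of the Bessel kernel.

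By contrast, both of your routes to the endpoint have holes. In the Hankel-transform route, the near field is not in fact ``handled directly by Cauchy--Schwarz'': from $|$kernel$|\lesssim |t|^{-d/2}$ and Cauchy--Schwarz over $\{s\lesssim |t|/r\}$ one gets $r^{\frac{d-1}2}|u_{\mathrm{near}}(t,r)|\lesssim r^{-1/2}\|u_0\|_{L^2_x}$, which is uniform in $t$ and unbounded as $r\to0$, hence controls neither the $L^\infty_x$ nor the $L^4_t$ norm; and in the far field the ``1D Schr\"odinger'' identification acts on the $(t,r)$-dependent truncation $u_0(s)s^{\frac{d-1}2}\chi_{\{rs\gtrsim |t|\}}$, so the one-dimensional endpoint Strichartz estimate cannot simply be applied to a fixed datum --- this is precisely the transition-zone difficulty you acknowledge but do not resolve. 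In the Littlewood--Paley fallback, Lemma~\ref{lr:propag est L} contains no radial gain of the form $(N|x|)^{-\frac{d-1}2}$ (that gain appears only in Lemma~\ref{P:kernel est}, and for $P^\pm_N$), and, more seriously, there is no mechanism for summing the dyadic pieces: uniform bounds $\|F_N\|_{L^4_tL^\infty_x}\lesssim\|P_Nu_0\|_{L^2_x}$ together with $L^2$-orthogonality of the blocks do not yield a square-summed bound in $L^4_tL^\infty_x$, and since the estimate is scale invariant there is no $\eps$-gain with which to sum. The simplest repair is exactly the paper's device: discard the oscillation of $J_{\frac{d-2}2}$, keep only its modulus bound, deduce the two-sided weighted dispersive estimate, and then run the $TT^*$, Hardy--Littlewood--Sobolev, and Christ--Kiselev steps of your own reduction unchanged.
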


\begin{proof}
For $q=\infty$, this corresponds to the trivial endpoint in Strichartz inequality.  We will only prove the result for the
$q=4$ endpoint, since the remaining cases then follow by interpolation.

As in the usual proof of Strichartz inequality, the method of $T T^*$ together with the Christ--Kiselev lemma
and  Hardy--Littlewood--Sobolev inequality reduce matters to proving that
\begin{equation}\label{radial dispersive}
\bigl\| |x|^{\frac{(d-1)}2} e^{it\Delta} |x|^{\frac{(d-1)}2} g \bigr\|_{L^{\infty}_x(\R^d)}
\lesssim |t|^{-\frac12} \| g \|_{L^1_x(\R^d)}
\end{equation}
for all radial functions $g$.

Let $P_\rad$ denote the projection onto radial functions.  Then
$$
[e^{it\Delta} P_\rad](x,y) = (4\pi i t)^{-\frac d2} e^{i\frac{|x|^2+|y|^2}{4t}}
    \int_{S^{d-1}}  e^{-i\frac{|y| \omega\cdot x}{2t}} \,d\sigma(\omega),
$$
where $d\sigma$ denotes the uniform probability measure on the unit sphere $S^{d-1}$.  This integral can be evaluated
exactly in terms the $J_{\frac{d-2}2}$ Bessel function.  Using this, or simple stationary phase arguments, one sees that
$$
\bigl| [e^{it\Delta} P_\rad](x,y) \bigr| \lesssim  |t|^{-\frac d2} \bigl( \tfrac{|y| |x|}{|t|} \bigr)^{-\frac{d-1}2}
    \lesssim |t|^{-\frac12}  |x|^{-\frac{d-1}2} |y|^{-\frac{d-1}2} .
$$
The radial dispersive estimate \eqref{radial dispersive} now follows easily.
\end{proof}

We will rely crucially on a slightly different type of improvement to the Strichartz inequality in the spherically symmetric
case due to Shao~\cite{Shuanglin}, which improves the spacetime decay of the solution after localizing in frequency:

\begin{lemma}[Shao's Strichartz Estimate, {\cite[Corollary~6.2]{Shuanglin}}]\label{L:Shuanglin}
For $f\in L^2_\rad(\R^d)$ we have
\begin{equation}\label{E:Shuanglin}
\| P_N e^{it\Delta} f \|_{L^q_{t,x}(\R\times\R^d)} \lesssim_q N^{\frac d2-\frac{d+2}q} \| f \|_{L^2_x(\R^d)},
\end{equation}
provided $q>\frac{4d+2}{2d-1}$.
\end{lemma}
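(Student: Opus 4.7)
The plan is to reduce to scale $N=1$ by parabolic rescaling and then exploit spherical symmetry to recast the $d$-dimensional Fourier extension problem as an essentially one-dimensional one on the parabola, controllable by Stein--Tomas. Under $f(x)\mapsto\lambda^{d/2}f(\lambda x)$, the operator $P_N e^{it\Delta}$ becomes $P_{N/\lambda}e^{i\lambda^{-2}t\Delta}$; the $L^q_{t,x}$ norm scales by $\lambda^{-(d+2)/q}$ while $\|f\|_{L^2_x}$ is invariant. Taking $\lambda=N$ reduces \eqref{E:Shuanglin} to the single statement $\|P_1 e^{it\Delta}f\|_{L^q_{t,x}(\R\times\R^d)}\lesssim\|f\|_{L^2_x(\R^d)}$ for radial $f$ and $q>(4d+2)/(2d-1)$.

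For radial $f$, only the zeroth spherical harmonic survives, giving
$$P_1 e^{it\Delta}f(x) = c\int_0^\infty e^{-it\rho^2}\psi(\rho)\hat f_{\rm rad}(\rho)(r\rho)^{-(d-2)/2}J_{(d-2)/2}(r\rho)\rho^{d-1}\,d\rho,\qquad r=|x|.$$
For $r\lesssim 1$, Bernstein supplies the crude pointwise bound $|P_1 e^{it\Delta}f(x)|\lesssim\|f\|_{L^2}$, which suffices. For $r\gtrsim 1$, the Bessel asymptotic $J_\nu(s)\sim s^{-1/2}e^{\pm is}$ factors out a gain of $r^{-(d-1)/2}$ and leaves the one-dimensional Fourier extension of $G(\rho):=\psi(\rho)\hat f_{\rm rad}(\rho)\rho^{(d-1)/2}$ along the parabola $\rho\mapsto(\rho,\rho^2)$, with $\|G\|_{L^2(\R_+)}\sim\|f\|_{L^2_x(\R^d)}$ by Plancherel in polar coordinates.

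Passing to polar coordinates in $x$, the $\{r\gtrsim 1\}$ contribution to $\|P_1 e^{it\Delta}f\|_{L^q_{t,x}}^q$ becomes
$$\iint_{r\gtrsim 1}r^{d-1-(d-1)q/2}\Bigl|\int G(\rho)e^{-it\rho^2\pm ir\rho}\,d\rho\Bigr|^q dr\,dt.$$
Stein--Tomas and the mixed-norm Strichartz estimates for the one-dimensional parabola supply various $L^q_t L^p_r$ bounds for the inner integral in terms of $\|G\|_{L^2}$, and I would combine these via H\"older or real interpolation, exploiting that the weight $r^{d-1-(d-1)q/2}$ decays once $q>2d/(d-1)$, to obtain the weighted $L^q_{t,x}$ estimate for $q>(4d+2)/(2d-1)$.

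The hard part is reaching the sharp cutoff $(4d+2)/(2d-1)$. Naive routes yield only $q>2(d+2)/d$ (standard Strichartz) or $q\geq 6$ (raw Stein--Tomas on the parabola), both strictly above $(4d+2)/(2d-1)$. Closing the gap requires balancing the Stein--Tomas gain on the parabola against the $r^{d-1}$ Jacobian from polar coordinates through a carefully chosen interpolation of mixed-norm Strichartz bounds; this optimization is the delicate technical core of~\cite{Shuanglin}.
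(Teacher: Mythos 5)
There is a genuine gap, and it is the heart of the matter. Note first that the paper itself offers no proof of this lemma: it is quoted verbatim as Corollary~6.2 of Shao's preprint \cite{Shuanglin}, so the only acceptable ``blind proof'' would be one that actually establishes the estimate. Your sketch performs the standard reductions (parabolic rescaling to $N=1$, radial reduction via the Bessel expansion, extraction of the $r^{-(d-1)/2}$ gain, polar coordinates leading to a weighted $L^q_{t,r}$ bound for a one-dimensional extension operator), but then you concede that the naive inputs --- Strichartz, giving $q>\tfrac{2(d+2)}{d}$, and Stein--Tomas on the parabola, giving $q\geq 6$ --- both miss the claimed threshold $q>\tfrac{4d+2}{2d-1}$, and you defer the step that actually reaches it to ``the delicate technical core of \cite{Shuanglin}.'' That step is precisely the content of the lemma; without specifying which weighted/mixed-norm extension estimate for the parabola you interpolate with, and verifying that the exponent bookkeeping lands exactly at $\tfrac{4d+2}{2d-1}$, nothing beyond the already-known Strichartz range has been proved. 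As written, the proposal is an honest reduction plus an appeal to the very result being proved.

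There is also a concrete error in the region $r\lesssim 1$: the Bernstein bound $|P_1e^{it\Delta}f(x)|\lesssim\|f\|_{L^2}$ is uniform in $t$, so it does not suffice --- the set $\{|x|\lesssim 1,\ t\in\R\}$ has infinite measure and the $L^q_{t,x}$ norm of a $t$-independent bound diverges. To handle small $r$ you must exploit decay in time, e.g.\ the frequency-localized kernel bounds of Lemma~\ref{lr:propag est L}, which show that for $|t|\gtrsim 1$ and $|x|\lesssim 1$ (hence $|x-y|\not\sim|t|$ on the region where the data lives) the propagator kernel decays rapidly in $t$; alternatively one keeps the small-$r$ region inside the weighted Bessel analysis rather than splitting it off. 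Either way, the two missing ingredients --- the time decay for $r\lesssim 1$ and, above all, the sharp weighted extension estimate that produces the exponent $\tfrac{4d+2}{2d-1}$ --- mean the proposal does not yet constitute a proof.
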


The last result is a bilinear estimate, which will be useful for controlling interactions between widely separated frequencies.

\begin{lemma}[Bilinear Strichartz]\label{L:bilinear Shao}
For any spacetime slab $I \times \R^d$, any $t_0 \in I$, and any $M, N > 0$, we have
\begin{align*}
\|( P_{\geq N} u) & (P_{\leq M} v) \|_{L^2_{t,x}(I \times \R^d)}\\
& \lesssim_q N^{-\frac 12} M^{\frac{d-1}2} \Bigl(\|P_{\geq N} u(t_0)\|_{L^2} +
    \| (i\partial_t + \Delta) P_{\geq N} u \|_{L^{\frac{2(d+2)}{d+4}}_{t,x}(I\times\R^d)}\Bigr)\\
&\qquad \qquad \times \Bigl(\|P_{\leq M} v(t_0)\|_{L^2} + \| (i\partial_t + \Delta)P_{\leq M} v\|_{L^{\frac{2(d+2)}{d+4}}_{t,x}(I\times\R^d)}\Bigr),
\end{align*}
for all spherically symmetric functions $u, v$ on $I$.
\end{lemma}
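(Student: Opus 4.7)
The plan is to follow Bourgain's classical bilinear Strichartz argument, which reduces to (1) a Fourier-analytic estimate for free Schr\"odinger solutions and (2) a Duhamel/Christ--Kiselev promotion to the inhomogeneous setting.

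For the free case, I would take $u = e^{it\Delta}u_0$, $v = e^{it\Delta}v_0$ with $\hat u_0$ supported in $|\xi|\geq N$ and $\hat v_0$ in $|\xi|\leq M$, and assume $N\geq 4M$ (the case $N \lesssim M$ follows from ordinary Strichartz, H\"older, and Bernstein). Computing the spacetime Fourier transform,
\[
\widehat{uv}(\tau,\xi) = \int_{\R^d} \hat u_0(\xi-\eta)\,\hat v_0(\eta)\,\delta(\tau + |\xi-\eta|^2 + |\eta|^2)\,d\eta,
\]
Plancherel gives $\|uv\|_{L^2_{t,x}}^2 = \int|\widehat{uv}|^2\,d\tau\,d\xi$. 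The key geometric observation is that the $\eta$-gradient of the phase equals $4\eta - 2\xi$, which has magnitude comparable to $|\xi|\sim N$ on the relevant set where $|\eta|\leq M\ll N$. Hence the constraint surface has $(d-1)$-dimensional measure $\lesssim M^{d-1}$, and the delta-function integral over $|\eta|\leq M$ is bounded by $M^{d-1}/N$. Cauchy--Schwarz in $\eta$, integration in $(\tau,\xi)$, and Fubini then produce
\[
\|uv\|_{L^2_{t,x}}^2 \lesssim \tfrac{M^{d-1}}{N}\,\|u_0\|_{L^2}^2\|v_0\|_{L^2}^2,
\]
which is the sharp free-solution bound.

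To upgrade this to the inhomogeneous setting, I would write each of $P_{\geq N}u$ and $P_{\leq M}v$ via the Duhamel formula as a sum of a homogeneous evolution of its value at $t_0$ plus a retarded integral of its forcing, and expand the product into four pieces. The homogeneous--homogeneous term is handled directly by the free estimate. The remaining three pieces involve one or two retarded Duhamel integrals; here the Christ--Kiselev lemma applied in each slot separately permits replacing the retarded integrals $\int_{t_0}^t$ by the non-retarded $\int_{t_0}^{\sup I}$. This step is legitimate because the relevant Strichartz dual exponent $\tfrac{2(d+2)}{d} > 2$. Combined with Minkowski in the Duhamel time variables and the free bilinear bound, this yields the claimed estimate with the $L^{\frac{2(d+2)}{d+4}}_{t,x}$ forcing norms on the right-hand side.

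The main obstacle is the Christ--Kiselev step for a bilinear quantity: one has to factor each retarded integral through a linear operator in order to invoke Christ--Kiselev, and then verify that the two slots may be treated independently so their promoted estimates multiply correctly. A secondary point worth noting is that the spherical symmetry hypothesis does not appear to play any role in this argument; the exponents $N^{-1/2}M^{(d-1)/2}$ are exactly those of the classical non-radial bilinear Strichartz estimate, and the radial assumption seems to be stated only to match the framework of the rest of the paper.
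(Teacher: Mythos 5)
Your proposal is correct and is essentially the argument behind the result the paper cites: the paper gives no proof of this lemma, referring instead to Lemma~2.5 of Visan's thesis (building on Bourgain and Colliander--Keel--Staffilani--Takaoka--Tao), whose proof is exactly the Plancherel/transversality computation for free solutions followed by Duhamel and Christ--Kiselev, as you outline. Your side remark is also accurate: the estimate with constant $N^{-1/2}M^{(d-1)/2}$ is the classical non-radial bilinear Strichartz bound, so the spherical symmetry hypothesis plays no role here and is stated only for consistency with the rest of the paper.
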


\begin{proof}
See \cite[Lemma 2.5]{Monica:thesis}, which builds on earlier versions in \cite{borg:book, ckstt:gwp}.
\end{proof}

\section{The self-similar solution}\label{ss-sec}

In this section we preclude self-similar solutions.  As mentioned in the Introduction, the key ingredient is additional
regularity.

\begin{theorem}[Regularity in the self-similar case]\label{ss-sob-thm}
Let $d\geq 3$ and let $u$ be a spherically symmetric solution to \eqref{nls} that is almost periodic modulo scaling and that
is self-similar in the sense of Theorem~\ref{comp}.  Then $u(t) \in H^s_x(\R^d)$ for all $t \in (0,\infty)$ and all $0\leq s< 1+\frac 4d$.
\end{theorem}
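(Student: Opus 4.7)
The plan is to adapt the Littlewood--Paley bootstrap of \cite{KTV} to the higher-dimensional fractional setting. By the scaling symmetry \eqref{scaling} and the self-similar relation $N(t) = t^{-1/2}$, it suffices to produce an $H^s_x$ bound at $t = 1$. Almost periodicity modulo scaling already delivers $\|P_{\geq N} u(1)\|_{L^2_x} \to 0$ as $N \to \infty$, and the goal is to upgrade this qualitative smallness to the polynomial rate $\|P_N u(1)\|_{L^2_x} \lesssim N^{-s}$ for each $s < 1 + \tfrac{4}{d}$, which sums to the desired $H^s_x$ bound.

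First I would apply $P_N$ to the backward Duhamel representation of Lemma~\ref{duhamel L},
\begin{equation*}
P_N u(1) = -i \lim_{T \searrow 0} \int_T^1 e^{i(1-t')\Delta} P_N F(u(t'))\, dt',
\end{equation*}
and split the $t'$-integral at the natural threshold $t' \sim N^{-2}$, the time scale at which $N(t')$ crosses $N$. On $[N^{-2}, 1]$ the solution lives essentially at frequencies $\lesssim N$, so $P_N F(u(t'))$ must be extracted by passing derivatives through the nonlinearity via Corollary~\ref{cor:s deriv}; the bilinear Strichartz inequality (Lemma~\ref{L:bilinear Shao}) should control interactions between the chosen high frequency $N$ and the dominant lower ones. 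On $(0, N^{-2}]$ the time $1 - t' \sim 1$, so the kernel bounds in Lemma~\ref{lr:propag est L} provide strong smoothing in $N$, while Shao's radial Strichartz (Lemma~\ref{L:Shuanglin}) delivers an honest $N$-gain --- this is the decisive use of spherical symmetry. Combining these pieces would yield a recursion of the schematic form
\begin{equation*}
a_N \lesssim b_N + \sum_{M \leq N/K} r^{\log_2(N/M)} a_M,
\end{equation*}
for suitable Littlewood--Paley-based norms $a_N$, with $r < 1$ fixed, $K$ a large integer, and $b_N \to 0$ coming from almost periodicity. Applying Lemma~\ref{Gronwall} with decay exponent $\sigma$ arbitrarily close to $1 + \tfrac{4}{d}$ then gives the claim.

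The main obstacle will be producing a recursion whose Gronwall constants permit $\sigma$ to approach the critical value $1 + \tfrac{4}{d}$. The fractional power in $F(u) = \mu |u|^{4/d} u$ destroys simple paraproduct arithmetic and forces one to differentiate once and appeal to the H\"older-continuous chain rule of Lemma~\ref{fdfp} (with $\alpha = \tfrac{4}{d}$) applied to $F_z$ and $F_{\bar z}$; this splits $|\nabla|^{\sigma} F(u)$ into a piece controlled by $|\nabla|^s u$ with $s < \sigma$ and a H\"older remainder controlled by $u$ itself, each of which must then be digested through Strichartz and Bernstein. Keeping all of the resulting Lebesgue exponents admissible while simultaneously arranging $2^{\sigma} r (K-1)^{1/(K-1)} < 1$ in Lemma~\ref{Gronwall} is the delicate part; the ceiling $1 + \tfrac{4}{d}$ is dictated precisely by the H\"older exponent of $F_z$, $F_{\bar z}$, which is why the fractional chain rule of Lemma~\ref{fdfp} is indispensable here.
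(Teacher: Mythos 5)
Your skeleton (scale-adapted Littlewood--Paley quantities, Duhamel, bilinear Strichartz, Shao's estimate, the fractional chain rules, and the Gronwall lemma) is the same as the paper's, but two of your steps would not close as described. First, a single application of Lemma~\ref{Gronwall} with decay exponent $\sigma$ close to $1+\frac4d$ cannot work when the source term $b_N$ is only known to tend to zero by almost periodicity: that lemma \emph{transfers} decay, requiring $b_k=O(2^{-k\sigma})$ in order to conclude $x_k=O(2^{-k\sigma})$, and it cannot manufacture a polynomial rate from qualitative smallness. In the paper the recursion's inhomogeneous term decays only like $A^{-1/d^2}$ (coming from the bilinear Strichartz gain in Lemma~\ref{nle} and from interpolating Shao's estimate, used strictly below the symmetric exponent $\frac{2(d+2)}d$ where \eqref{E:Shuanglin} gives no gain, against a dispersive bound), so the first pass yields only $\mathcal{M}(A)+\mathcal{S}(A)+\mathcal{N}(A)\lesssim_u A^{-1/d^2}$ (Proposition~\ref{quant prop}, Corollary~\ref{decay M,S,N}). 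The full range $s<1+\frac4d$ is then reached by \emph{iterating} a triple of implications: Lemma~\ref{nle} upgrades $\mathcal{N}$, the forward-in-time Duhamel inequality \eqref{N-->M} upgrades $\mathcal{M}$, and Strichartz \eqref{ci4} upgrades $\mathcal{S}$, each step strictly improving the exponent $\sigma$. This iteration, and in particular the transfer $\mathcal{N}\Rightarrow\mathcal{M}$ via Lemma~\ref{duhamel L} taken \emph{forward} in time (toward $t=+\infty$, where $N(t)\to0$ and the solution is smoother), is missing from your plan, which relies on the backward representation alone.

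Second, your treatment of the backward integral over $t'\in(0,N^{-2}]$ does not hold up as stated: at such times the solution lives at frequencies $\gtrsim N$, so $P_NF(u(t'))$ carries no frequency smallness, and Lemma~\ref{lr:propag est L} provides no ``smoothing in $N$'' --- on the region $|x-y|\sim N|1-t'|$ the kernel is merely $O(1)$ in size, and the rapid decay elsewhere is purely spatial, so it would have to be paired with quantitative spatial localization of $F(u(t'))$ and of where the output is measured, which you have not supplied. The paper instead extracts the small-time frequency gain differently: it writes $u(T/2)$ via Duhamel down to $t=0$, uses self-similarity to show $P_{\leq BT^{-1/2}}u(\eps)\to0$ in $L^2_x$ as $\eps\to0$, applies the dispersive estimate \eqref{dispersive-p} to the remaining integral, and only then interpolates with Shao's estimate to produce a negative power of the frequency. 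A further, more minor, point: the reduction to $t=1$ ``by scaling'' is not available, since self-similarity in the sense of Theorem~\ref{comp} means only $N(t)=t^{-1/2}$, not invariance of $u$ under \eqref{scaling}; this is precisely why the paper's quantities \eqref{cadef} carry a supremum over all $T>0$, which is in any case forced on you because the forward Duhamel step couples all later dyadic time intervals.
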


\begin{corollary}[Absence of self-similar solutions]\label{NO-ss}
For $d\geq 3$ there are no non-zero spherically symmetric solutions to \eqref{nls} that are self-similar in the sense of
Theorem~\ref{comp}.
\end{corollary}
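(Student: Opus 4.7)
The plan is to argue by contradiction, using Theorem \ref{ss-sob-thm} to upgrade the regularity to $H^1_x$ and then invoking classical global well-posedness in $H^1_x$ to contradict the self-similar structure $I=(0,\infty)$.

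Suppose $u$ is a non-zero spherically symmetric self-similar solution in the sense of Theorem \ref{comp}, so $I=(0,\infty)$ and, by the blowup criterion in Theorem \ref{local}, $u$ cannot be extended past $t=0$ as a strong $L^2_x$ solution. Applying Theorem \ref{ss-sob-thm} with $s=1$, we obtain $u(t)\in H^1_x(\R^d)$ for every $t>0$. Fix an arbitrary $t_0>0$ and consider the Cauchy problem for \eqref{nls} with initial data $u(t_0)\in H^1_x$ at time $t_0$.

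Next, I would invoke the classical $H^1_x$ global theory summarized just after Theorem \ref{main}. In the defocusing case, mass and energy conservation combined with a standard contraction mapping argument yield a global-in-time $H^1_x$ solution $\tilde u$. In the focusing case, Theorem \ref{comp} guarantees $M(u)<M(Q)$, and Weinstein's sharp Gagliardo--Nirenberg inequality (Theorem \ref{sGN}) gives a uniform upper bound on $\|\tilde u(t)\|_{\dot H^1_x}$ in terms of the conserved mass and energy, after which the same contraction argument produces a global $H^1_x$ solution $\tilde u$ defined on all of $\R$.

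Since $\tilde u\in C^0_t H^1_x\subset C^0_t L^2_x$, it is in particular a strong $L^2_x$ solution in the sense of the Definition, and by the uniqueness clause of Theorem \ref{local} we have $\tilde u=u$ on $(0,\infty)$. But $\tilde u$ is defined on all of $\R$, which extends $u$ to negative times and contradicts the maximality of the lifespan $I=(0,\infty)$; hence no such $u$ can exist. The main obstacle in this argument is entirely contained in Theorem \ref{ss-sob-thm} (where one must bootstrap from $L^2_x$ to full $H^1_x$ regularity using the frequency/space localization guaranteed by almost periodicity together with the self-similar scale $N(t)=t^{-1/2}$); once that regularity is in hand, the corollary follows from the short contradiction above.
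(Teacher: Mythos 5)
Your proposal is correct and follows essentially the same route as the paper: use Theorem~\ref{ss-sob-thm} to place $u(t)$ in $H^1_x$, then invoke the $H^1_x$ global theory (with $M(u)<M(Q)$ and Theorem~\ref{sGN} in the focusing case) and the uniqueness clause of Theorem~\ref{local} to contradict the maximality of the lifespan $I=(0,\infty)$. No gaps to report.
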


\begin{proof}
By Theorem~\ref{ss-sob-thm}, any such solution would obey $u(t) \in H^1_x(\R^d)$ for all $t\in (0,\infty)$. Then, by the $H^1_x$
global well-posedness theory described after Theorem~\ref{main}, there exists a global solution with initial data $u(t_0)$ at
any time $t_0\in(0,\infty)$; recall that we assume $M(u) < M(Q)$ in the focusing case. On the other hand, self-similar solutions
blow up at time $t=0$. These two facts (combined with the uniqueness statement in Theorem~\ref{local}) yield a contradiction.
\end{proof}

The remainder of this section is devoted to proving Theorem~\ref{ss-sob-thm}.

Let $u$ be as in Theorem~\ref{ss-sob-thm}.  For any $A > 0$, we define
\begin{equation}\label{cadef}
\begin{split}
\mathcal{M}(A) &:= \sup_{T > 0} \| u_{>A T^{-1/2}}(T) \|_{L^2_x(\R^d)} \\
\mathcal{S}(A) &:= \sup_{T > 0} \| u_{>A T^{-1/2}} \|_{L^{2(d+2)/d}_{t,x}([T,2T] \times \R^d)}\\
\mathcal{N}(A) &:= \sup_{T > 0} \| P_{>A T^{-1/2}}F(u) \|_{L^{2(d+2)/(d+4)}_{t,x}([T,2T] \times \R^d)}.
\end{split}
\end{equation}
The notation chosen indicates the quantity being measured, namely, the mass, the symmetric Strichartz norm, and the nonlinearity
in the adjoint Strichartz norm, respectively.  As $u$ is self-similar, $N(t)$ is comparable to $T^{-1/2}$ for $t$ in the
interval $[T,2T]$.  Thus, the Littlewood-Paley projections are adapted to the natural frequency scale on each dyadic time interval.

To prove Theorem~\ref{ss-sob-thm} it suffices to show that for every $0<s<1+\frac 4d$ we have
\begin{equation}\label{c2-targ}
\mathcal{M}(A) \lesssim_{s,u} A^{-s},
\end{equation}
whenever $A$ is sufficiently large depending on $u$ and $s$.  To establish this, we need a variety of estimates linking
$\mathcal{M}$, $\mathcal{S}$, and $\mathcal{N}$.  From mass conservation, Lemma~\ref{spacelemma}, self-similarity, and
H\"older's inequality, we see that
\begin{equation}\label{ctrivb}
\mathcal{M}(A) + \mathcal{S}(A) + \mathcal{N}(A)\lesssim_u 1
\end{equation}
for all $A > 0$. From the Strichartz inequality (Lemma~\ref{L:strichartz}), we also see that
\begin{equation}\label{ci4}
\mathcal{S}(A) \lesssim \mathcal{M}(A) + \mathcal{N}(A)
\end{equation}
for all $A > 0$.  Another application of Strichartz shows
\begin{align}\label{endpoint strich}
\|u\|_{L_t^2 L_x^{\frac{2d}{d-2}}([T,2T]\times\R^d)}\lesssim_u 1.
\end{align}

Next, we obtain a deeper connection between these quantities.

\begin{lemma}[Nonlinear estimate]\label{nle}  Let $\eta>0$ and $0<s<1+\frac 4d$.  For all $A>100$ and $0<\beta\leq 1$, we have
\begin{equation}\label{nle est}
\begin{aligned}
\mathcal{N}(A)
&\lesssim_u \sum_{N\leq \eta A^\beta}\bigl( \tfrac NA\bigr)^s \mathcal{S}(N)
    + \bigl[\mathcal{S}(\eta A^{\frac \beta{2(d-1)}}) + \mathcal{S}(\eta A^\beta)\bigr]^{\frac 4d} \mathcal{S}(\eta A^\beta)\\
&\qquad \qquad + A^{-\frac{2\beta}{d^2}} \bigl[\mathcal{M}(\eta A^\beta) + \mathcal{N}(\eta A^\beta)\bigr].
\end{aligned}
\end{equation}
\end{lemma}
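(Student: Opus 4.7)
Set $K := \eta A^\beta T^{-1/2}$ and $K' := \eta A^{\beta/(2(d-1))} T^{-1/2}$, so that $K' \leq K$. The plan is to split $u = u_{\leq K} + u_{>K}$, decompose
$$F(u) = F(u_{\leq K}) + \bigl[F(u)-F(u_{\leq K})\bigr],$$
and estimate each piece separately on a dyadic time slab $[T,2T] \times \R^d$ (where self-similarity makes $T^{-1/2}$ the natural frequency scale), before taking the supremum in $T$ to recover $\mathcal{N}(A)$.

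For the smoother piece $P_{>AT^{-1/2}} F(u_{\leq K})$: since a product of factors of frequency $\leq K \ll AT^{-1/2}$ would have frequency well below $AT^{-1/2}$, the projection captures only the non-smoothness of the fractional power. I will apply Bernstein to trade $P_{>AT^{-1/2}}$ for the factor $(AT^{-1/2})^{-s}|\nabla|^s$ (with any admissible $0\leq s<1+\tfrac 4d$) and then invoke Corollary~\ref{cor:s deriv}:
\begin{align*}
\|P_{>AT^{-1/2}} F(u_{\leq K})\|_{L^{\frac{2(d+2)}{d+4}}_{t,x}}
\lesssim A^{-s} T^{s/2} \bigl\||\nabla|^s u_{\leq K}\bigr\|_{L^{\frac{2(d+2)}d}_{t,x}} \|u_{\leq K}\|_{L^{\frac{2(d+2)}d}_{t,x}}^{\frac 4d}.
\end{align*}
The last factor is $O_u(1)$ by \eqref{ctrivb}, and a Littlewood--Paley decomposition $|\nabla|^s u_{\leq K} \sim \sum_{M\leq K} M^s P_M u$, together with the rescaling $N = MT^{1/2}$ and the definition of $\mathcal{S}$, produces the first summand of \eqref{nle est}.

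For the difference I will use the pointwise bound $|F(u)-F(u_{\leq K})|\lesssim |u_{>K}|(|u_{>K}|^{4/d} + |u_{\leq K}|^{4/d})$. The pure high-frequency contribution $|u_{>K}|^{1+\frac 4d}$ is estimated directly by $\mathcal{S}(\eta A^\beta)^{1+\frac 4d}$ via H\"older. For the mixed term $|u_{>K}||u_{\leq K}|^{4/d}$, further split $u_{\leq K} = u_{K'<\cdot\leq K} + u_{\leq K'}$ and use $|a+b|^{4/d}\lesssim |a|^{4/d}+|b|^{4/d}$. The annular piece $|u_{>K}||u_{K'<\cdot\leq K}|^{4/d}$ is controlled by H\"older and gives $\mathcal{S}(\eta A^{\beta/(2(d-1))})^{4/d}\mathcal{S}(\eta A^\beta)$; combined with the pure high-frequency term this assembles the second summand of \eqref{nle est}.

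The very-low/high cross term $|u_{>K}||u_{\leq K'}|^{4/d}$ supplies the third summand and is the hardest case; it is where spherical symmetry is indispensable. My plan is to bound $u_{\leq K'}$ via the weighted Strichartz estimate (Lemma~\ref{L:wes}), whose weight $|x|^{2(d-1)/q}$ furnishes spatial decay on the scale $(K')^{-1}$ of the low-frequency profile, while representing $u_{>K}$ by the Duhamel formula of Lemma~\ref{duhamel L} so that its Strichartz norm is controlled by $\mathcal{M}(\eta A^\beta)+\mathcal{N}(\eta A^\beta)$. A carefully chosen H\"older pairing then absorbs the weight against $u_{>K}$ and produces the gain $A^{-2\beta/d^2}$; the auxiliary threshold $K'$ is dictated by forcing the weight exponent, the spatial scale, and the H\"older budget to be mutually compatible. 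The main obstacle is this exponent arithmetic: choosing $q \in [4,\infty)$ and the accompanying Lebesgue exponents so that every norm is Strichartz-admissible, the weights are absorbed correctly, and the gain in $A$ comes out to exactly $-\tfrac{2\beta}{d^2}$.
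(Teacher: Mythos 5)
Your decomposition and your handling of three of the four pieces match the paper: the smooth piece $F(u_{\leq \eta A^\beta T^{-1/2}})$ via Bernstein plus Corollary~\ref{cor:s deriv} and \eqref{ctrivb}, and the pure high and annular-times-high terms via H\"older, are exactly the paper's estimates. The gap is in the very-low/high cross term $O\bigl(|u_{\leq \eta A^{\beta/(2(d-1))}T^{-1/2}}|^{4/d}|u_{>\eta A^\beta T^{-1/2}}|\bigr)$, which is indeed the crux, and your proposed route there does not work as described. The weighted Strichartz estimate (Lemma~\ref{L:wes}) gives a gain only where $|x|$ is large, and there is no reason the low-frequency piece lives at radii $\gtrsim (K')^{-1}$: almost periodicity plus self-similarity localize the solution at spatial scale $1/N(t)\sim T^{1/2}$, which is much larger than $(K')^{-1}$ since $K'\gg T^{-1/2}$, so the weight evaluated on the bulk of the solution produces a power of $T$, not any negative power of $A$. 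In other words, your mechanism converts spatial information into decay, whereas the needed smallness $A^{-2\beta/d^2}$ must come from the \emph{frequency separation} between $\leq \eta A^{\beta/(2(d-1))}T^{-1/2}$ and $>\eta A^\beta T^{-1/2}$; nothing in your H\"older pairing extracts that. (Also, invoking Lemma~\ref{duhamel L} for $u_{>K}$ is not the right tool here; that lemma is used later, in Corollary~\ref{decay M,S,N}, to pass from $\mathcal{N}$ to $\mathcal{M}$.)

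The paper's argument for this term uses the bilinear Strichartz estimate (Lemma~\ref{L:bilinear Shao}): after discarding the outer projection, one H\"olders the $L^{2(d+2)/(d+4)}_{t,x}$ norm into
$\|u_{\leq \eta A^\alpha T^{-1/2}}\,u_{>\eta A^\beta T^{-1/2}}\|_{L^2_{t,x}}^{8/d^2}$ times $\|u_{>\eta A^\beta T^{-1/2}}\|_{L^{2(d+2)/d}_{t,x}}^{1-8/d^2}$ times $\|u_{\leq \eta A^\alpha T^{-1/2}}\|_{L^2_{t,x}}^{4/d-8/d^2}$, and bounds the bilinear factor by $N^{-1/2}M^{(d-1)/2}$ with $N=\eta A^\beta T^{-1/2}$, $M=\eta A^\alpha T^{-1/2}$, with the data/nonlinearity factors controlled by $\mathcal{M}(\eta A^\beta)+\mathcal{N}(\eta A^\beta)$ (using \eqref{ci4} and \eqref{ctrivb}). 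The choice $\alpha=\beta/(2(d-1))$ is tuned precisely so that $N^{-1/2}M^{(d-1)/2}\sim A^{-\beta/4}$ up to powers of $T$; raising to the exponent $8/d^2$ gives exactly $A^{-2\beta/d^2}$, and the $T$-powers cancel against $\|u_{\leq \eta A^\alpha T^{-1/2}}\|_{L^2_{t,x}([T,2T])}\lesssim_u T^{1/2}$. Without some substitute for this bilinear input, your exponent arithmetic cannot be made to close, so as written the proposal is missing the key idea for the third summand of \eqref{nle est}.
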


\begin{proof}
Fix $\eta>0$ and $0<s<1+\frac 4d$.  It suffices to bound
$$\| P_{>A T^{-\frac 12}}F(u) \|_{L^{2(d+2)/(d+4)}_{t,x}([T,2T] \times \R^d)}$$
by the right-hand side of \eqref{nle est} for arbitrary $T > 0$ and all $A>100$ and $0<\beta\leq 1$.

To achieve this, we decompose
\begin{equation}\label{1 decomp}
\begin{aligned}
F(u)&=F(u_{\leq \eta A^\beta T^{-\frac 12}}) + O\bigl(|u_{\leq \eta A^\alpha T^{-\frac 12}}|^{\frac 4d} |u_{>\eta A^\beta T^{-\frac 12}}|\bigr)\\
    &\quad + O\bigl(|u_{\eta A^\alpha T^{-\frac 12}< \cdot\leq \eta A^\beta T^{-\frac 12}}|^{\frac 4d} |u_{> \eta A^\beta T^{-\frac 12}}|\bigr)
        + O\bigl(|u_{> \eta A^\beta T^{-\frac 12}}|^{1+\frac 4d}\bigr),
\end{aligned}
\end{equation}
where $\alpha=\tfrac\beta{2(d-1)}$. To estimate the contribution from the last two terms in the expansion above,
we discard the projection to high frequencies and then use H\"older's inequality and \eqref{cadef}:
\begin{align*}
\bigl\||u_{\eta A^\alpha T^{-\frac 12}< \cdot\leq \eta A^\beta T^{-\frac 12}}|^{\frac 4d}
    u_{> \eta A^\beta T^{-\frac 12}} \bigr\|_{L^{\frac{2(d+2)}{d+4}}_{t,x}\!([T,2T] \times \R^d)}
        &\lesssim \mathcal{S}(\eta A^\alpha)^{\frac 4d}\mathcal{S}(\eta A^\beta)\\
\bigl\||u_{> \eta A^\beta T^{-\frac 12}}|^{1+\frac 4d}\bigr\|_{L^{\frac{2(d+2)}{d+4}}_{t,x}\!([T,2T] \times \R^d)}
        &\lesssim \mathcal{S}(\eta A^\beta)^{1+\frac 4d}.
\end{align*}
To estimate the contribution coming from second term on the right-hand side of \eqref{1 decomp},
we discard the projection to high frequencies and then use H\"older's inequality, Lemma~\ref{Bernstein},
Lemma~\ref{L:bilinear Shao}, and \eqref{ci4}:
\begin{align*}
\bigl\|  &  P_{>A T^{-\frac 12}} O\bigl(|u_{\leq \eta A^\alpha T^{-\frac 12}}|^{\frac 4d}
    |u_{> \eta A^\beta T^{-\frac 12}}|\bigr)\bigr\|_{L^{\frac{2(d+2)}{d+4}}_{t,x}\!([T,2T] \times \R^d)}\\
&\lesssim \bigl\|u_{\leq \eta A^\alpha T^{-\frac 12}}u_{>\eta A^\beta T^{-\frac 12}}\bigr\|_{L^2_{t,x}([T,2T] \times \R^d)}^{\frac 8{d^2}}
    \bigl\|u_{> \eta A^\beta T^{-\frac 12}}\bigr\|_{L^{\frac{2(d+2)}d}_{t,x}\!([T,2T] \times \R^d)}^{1-\frac 8{d^2}}\\
&\qquad \qquad \times \bigl\|u_{\leq \eta A^\alpha T^{-\frac 12}}\bigr\|_{L^2_{t,x}([T,2T] \times \R^d)}^{\frac 4d-\frac 8{d^2}}\\
&\lesssim_u \bigl[(\eta A^\beta T^{-\frac 12})^{-\frac 12}(\eta A^\alpha T^{-\frac 12})^{\frac{d-1}2}\bigr]^{\frac 8{d^2}}
    \bigl[\mathcal{M}(\eta A^\beta) + \mathcal{N}(\eta A^\beta)\bigr]^{\frac 8{d^2}}
    \mathcal{S}(\eta A^\beta)^{1-\frac 8{d^2}} T^{\frac 2d-\frac 4{d^2}}\\
&\lesssim_u A^{-\frac{2\beta}{d^2}} \bigl[\mathcal{M}(\eta A^\beta) + \mathcal{N}(\eta A^\beta)\bigr].
\end{align*}
We now turn to the first term on the right-hand side of \eqref{1 decomp}. By Lemma~\ref{Bernstein} and Corollary~\ref{cor:s
deriv} combined with \eqref{ctrivb}, we estimate
\begin{align*}
\| P_{>A T^{-\frac 12}}&F(u_{\leq \eta A^\beta T^{-\frac 12}}) \|_{L^{\frac{2(d+2)}{d+4}}_{t,x}([T,2T] \times \R^d)}\\
&\lesssim (AT^{-\frac 12})^{-s}\bigl\| |\nabla|^s F(u_{\leq \eta A^\beta T^{-\frac 12}}) \bigr\|_{L^{\frac{2(d+2)}{d+4}}_{t,x}([T,2T] \times \R^d)}\\
&\lesssim_u (AT^{-\frac 12})^{-s}\bigl\| |\nabla|^s u_{\leq \eta A^\beta T^{-\frac 12}} \bigr\|_{L^{\frac{2(d+2)}{d}}_{t,x}([T,2T] \times \R^d)}\\
&\lesssim_u \sum_{N\leq \eta A^\beta}\bigl( \tfrac NA\bigr)^s \mathcal{S}(N),
\end{align*}
 which is acceptable.  This finishes the proof of the
lemma.
\end{proof}

We have some decay as $A \to \infty$:

\begin{lemma}[Qualitative decay]\label{qualit lemma}  We have
\begin{equation}\label{clima}
\lim_{A \to \infty} \mathcal{M}(A) = \lim_{A \to \infty} \mathcal{S}(A)=\lim_{A \to \infty} \mathcal{N}(A)=0.
\end{equation}
\end{lemma}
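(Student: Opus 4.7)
The convergence $\lim_{A\to\infty}\mathcal{M}(A)=0$ is immediate from Definition~\ref{apdef} applied at each $T$: since $N(T)=T^{-1/2}$, the frequency tail estimate gives $\int_{|\xi|>C(\eta)T^{-1/2}}|\hat u(T,\xi)|^2\,d\xi\leq\eta$ uniformly in $T$, whence $\mathcal{M}(A)^2\leq\eta$ as soon as $A\gtrsim C(\eta)$.

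For $\mathcal{S}$ and $\mathcal{N}$, my plan is to rescale so that all the spacetime norms live on a common time interval, and then exploit compactness. For each $T>0$, I set
$$\tilde u_T(s,y):=T^{d/4}u(Ts,T^{1/2}y),\qquad (s,y)\in[1,2]\times\R^d.$$
By the scaling symmetry \eqref{scaling}, $\tilde u_T$ solves \eqref{nls} on $[1,2]$, and the scale-invariance of the norms in question under this transformation yields
\begin{align*}
\mathcal{S}(A)&=\sup_{T>0}\|P_{>A}\tilde u_T\|_{L^{2(d+2)/d}_{s,y}([1,2]\times\R^d)},\\
\mathcal{N}(A)&=\sup_{T>0}\|P_{>A}F(\tilde u_T)\|_{L^{2(d+2)/(d+4)}_{s,y}([1,2]\times\R^d)}.
\end{align*}
Thus it suffices to establish equi-continuity in frequency for the families $\{\tilde u_T\}$ and $\{F(\tilde u_T)\}$ in the respective Lebesgue spaces.

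The dilation $y\mapsto T^{1/2}y$ converts both the position and frequency tail bounds for $u(T,\cdot)$ (with $N(T)=T^{-1/2}$) into tail bounds for $\tilde u_T(1,\cdot)$ that are uniform in $T$, so $\{\tilde u_T(1,\cdot):T>0\}$ admits a common compactness modulus function and is therefore precompact in $L^2_y(\R^d)$. From Lemma~\ref{spacelemma} and self-similarity I also get a uniform spacetime bound $\|\tilde u_T\|_{L^{2(d+2)/d}_{s,y}([1,2]\times\R^d)}\lesssim_u 1$. The standard stability theory for the mass-critical NLS---a perturbative consequence of Lemma~\ref{L:strichartz}---then promotes $L^2_y$-convergence of initial data (under this uniform Strichartz control) to $L^{2(d+2)/d}_{s,y}$-convergence of solutions, giving precompactness of $\{\tilde u_T\}$ in $L^{2(d+2)/d}_{s,y}([1,2]\times\R^d)$. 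H\"older continuity of the map $v\mapsto F(v)=\mu|v|^{4/d}v$ from $L^{2(d+2)/d}$ into $L^{2(d+2)/(d+4)}$ then transfers precompactness to $\{F(\tilde u_T)\}$ in $L^{2(d+2)/(d+4)}_{s,y}([1,2]\times\R^d)$.

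To finish, I would invoke the general fact that any precompact subset $\mathcal{K}\subset L^p$ with $1\leq p<\infty$ satisfies $\sup_{v\in\mathcal{K}}\|P_{>A}v\|_{L^p}\to 0$ as $A\to\infty$; this follows from the uniform $L^p$-boundedness of $P_{>A}$, the pointwise convergence $\|P_{>A}v\|_{L^p}\to 0$ for each fixed $v\in L^p$, and total boundedness of $\mathcal{K}$. Applying this to the two precompact families above yields $\mathcal{S}(A),\mathcal{N}(A)\to 0$. The most delicate step is the third paragraph, namely the promotion of $L^2$-precompactness of initial data to spacetime precompactness; the fractional character of the nonlinearity is not an obstacle here (continuity of $F$ in the Strichartz space is all that is needed), although it will be the central difficulty in the quantitative regularity estimates later in the section.
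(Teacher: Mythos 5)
Your proposal is correct, but for the $\mathcal{S}$ and $\mathcal{N}$ limits it takes a genuinely different and heavier route than the paper. The paper disposes of $\mathcal{S}$ in one line: interpolating between the $L^\infty_t L^2_x$ and $L^2_t L^{2d/(d-2)}_x$ norms of $u_{>AT^{-1/2}}$ on $[T,2T]\times\R^d$ and using the already-recorded endpoint bound \eqref{endpoint strich} gives $\mathcal{S}(A)\lesssim_u \mathcal{M}(A)^{2/(d+2)}$, so the vanishing of $\mathcal{S}$ is inherited directly (and quantitatively) from that of $\mathcal{M}$; the vanishing of $\mathcal{N}$ then follows from Lemma~\ref{nle} together with \eqref{ctrivb}. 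You instead rescale each dyadic time slab to $[1,2]$, use almost periodicity to get $L^2_y$-precompactness of the rescaled data, invoke the standard mass-critical stability/perturbation theory (with the uniform spacetime bound coming from Lemma~\ref{spacelemma} and self-similarity) to upgrade this to precompactness of $\{\tilde u_T\}$ in $L^{2(d+2)/d}_{s,y}$, push it through the locally Lipschitz map $v\mapsto F(v)$, and finish with the fact that $\|P_{>A}\cdot\|_{L^p}\to 0$ uniformly on precompact subsets of $L^p$. All of these steps are sound (the scaling identities for $\mathcal{S}$ and $\mathcal{N}$ check out, and the stability theory in $L^2$ requires no derivatives, so the fractional nonlinearity causes no trouble, as you note). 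What your approach buys is softness and generality: it shows that frequency equicontinuity of the orbit in the Strichartz space is an automatic consequence of almost periodicity plus uniform spacetime bounds, without using the endpoint Strichartz estimate or the structure of Lemma~\ref{nle}. What it costs is the appeal to the stability lemma, which is standard (see, e.g., the references \cite{compact, tvz-higher}) but is not developed in this paper, whereas the paper's interpolation argument uses only tools already on the table and moreover yields the quantitative relation $\mathcal{S}(A)\lesssim_u\mathcal{M}(A)^{2/(d+2)}$, which is in the spirit of the quantitative decay sought in the rest of the section.
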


\begin{proof} The vanishing of the first limit follows from Definition~\ref{apdef}, self-similarity, and \eqref{cadef}.
By interpolation, \eqref{cadef}, and \eqref{endpoint strich},
$$
\mathcal{S}(A)
\lesssim \mathcal{M}(A)^{\frac{2}{d+2}} \|u_{\geq A T^{-\frac 12}}\|_{L_t^2 L_x^{\frac{2d}{d-2}}([T,2T]\times\R^d)}^{\frac{d}{d+2}}
\lesssim_u \mathcal{M}(A)^{\frac{2}{d+2}}.
$$
Thus, as the first limit in \eqref{clima} vanishes, we obtain that the second limit vanishes.
The vanishing of the third limit follows from that of the second and Lemma~\ref{nle}.
\end{proof}

We have now gathered enough tools to prove some regularity, albeit in the symmetric Strichartz space. As such, the next result
is the crux of this section.

\begin{proposition}[Quantitative decay estimate]\label{quant prop}
Let $0 < \eta < 1$ and $0<s<1+\frac 4d$.  If $\eta$ is sufficiently small depending on $u$ and $s$,
and $A$ is sufficiently large depending on $u$, $s$, and $\eta$,
\begin{align}\label{recur}
\mathcal{S}(A) \leq \sum_{N\leq \eta A}\bigl( \tfrac NA\bigr)^s \mathcal{S}(N) + A^{-\frac{1}{d^2}}.
\end{align}
In particular,
\begin{align}\label{S decay}
\mathcal{S}(A)\lesssim_u A^{-\frac{1}{d^2}},
\end{align}
for all $A>0$.
\end{proposition}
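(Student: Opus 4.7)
The plan is to establish the recursion \eqref{recur} by combining the Strichartz-type bound \eqref{ci4} with the nonlinear estimate Lemma~\ref{nle} (taking $\beta=1$) and absorbing all auxiliary terms using the qualitative decay of Lemma~\ref{qualit lemma}; Lemma~\ref{Gronwall} then converts \eqref{recur} into the desired decay \eqref{S decay}.

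Feeding $\beta=1$ into Lemma~\ref{nle} and invoking \eqref{ci4} produces
$$\mathcal{S}(A)\lesssim_u \mathcal{M}(A) + \sum_{N\le\eta A}\bigl(\tfrac NA\bigr)^s\mathcal{S}(N) + \bigl[\mathcal{S}\bigl(\eta A^{\tfrac{1}{2(d-1)}}\bigr)+\mathcal{S}(\eta A)\bigr]^{4/d}\mathcal{S}(\eta A) + A^{-2/d^2}\bigl[\mathcal{M}(\eta A)+\mathcal{N}(\eta A)\bigr].$$
The last term is $\lesssim_u A^{-2/d^2}\le\tfrac14 A^{-1/d^2}$ once $A$ is large, by \eqref{ctrivb}. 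For the cross term, Lemma~\ref{qualit lemma} drives the factor $[\mathcal{S}(\eta A^{1/(2(d-1))})+\mathcal{S}(\eta A)]^{4/d}$ to $0$ as $A\to\infty$, while $\mathcal{S}(\eta A)=\eta^{-s}\cdot(\eta A/A)^s\mathcal{S}(\eta A)$ is $\eta^{-s}$ times the $N=\eta A$ entry of the main sum, so the whole term is absorbed. To reduce the multiplicative constant on the sum itself to $1$, I would instead apply Lemma~\ref{nle} with a slightly larger exponent $s+\delta\in(s,1+\tfrac4d)$; then $(N/A)^{s+\delta}\le\eta^\delta(N/A)^s$ on the summation range $N\le\eta A$, and choosing $\eta$ small in terms of $u,s,\delta$ reduces the $C_u$ prefactor below $1$.

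The delicate step is eliminating the stray $\mathcal{M}(A)$. I would use the past-limit Duhamel formula of Lemma~\ref{duhamel L},
$$P_{>AT^{-1/2}}u(T)=-i\int_0^T e^{i(T-t)\Delta}P_{>AT^{-1/2}}F(u(t))\,dt,$$
splitting $(0,T]$ at $t_*=T/A^2$. On $[t_*,T]$ a dyadic decomposition and dual Strichartz yield contributions of size $\mathcal{N}(A 2^{-k/2})$, which fold into the main sum of \eqref{recur}. On $(0,t_*)$ the cutoff $AT^{-1/2}$ lies below the natural frequency scale $t^{-1/2}$, so Strichartz by itself is useless; however $(AT^{-1/2})^2(T-t)\gtrsim A^2$ is large, and almost-periodicity localises $F(u(t))$ to $|y|\lesssim C(\eta)\sqrt{t}\ll\sqrt T$. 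Lemma~\ref{lr:propag est L} therefore shows that source and target points lie far outside the classical propagation region $\{|x-y|\sim AT^{1/2}\}$, so the kernel is smaller than any polynomial in $A^{-1}$ and the distant-past contribution is negligible. This completes \eqref{recur}.

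For the iteration, I would fix $s\in(1/d^2,1+\tfrac4d)$, set $A=2^k A_0$ with $A_0$ large, and put $x_k:=\mathcal{S}(2^k A_0)$. Then \eqref{recur} takes the form
$$x_k\le\sum_{l=0}^{k-K}2^{-s(k-l)}x_l + C\cdot 2^{-k/d^2},\qquad K=\lceil-\log_2\eta\rceil,$$
precisely of the type handled by Lemma~\ref{Gronwall} with $r=2^{-s}$ and $\sigma=1/d^2$. Since $s>1/d^2$ and $K\to\infty$ as $\eta\to 0$, the hypothesis $2^\sigma r(K-1)^{1/(K-1)}<1$ is satisfied for $\eta$ small, and Lemma~\ref{Gronwall} delivers $x_k=O(2^{-k/d^2})$, which is \eqref{S decay}. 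The main technical obstacle is the deep-past piece of $\mathcal{M}(A)$: the naive Duhamel--Strichartz bound on $(0,T]$ generates a divergent sum over infinitely many small-time dyadic intervals, and only the combination of kernel decay with almost-periodic spatial localisation salvages it.
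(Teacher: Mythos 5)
Your outer framework (Lemma~\ref{nle} with $\beta=1$, qualitative decay from Lemma~\ref{qualit lemma} to absorb the cross terms, the $s+\delta$ trick to beat the implicit constant, and Lemma~\ref{Gronwall} to pass from \eqref{recur} to \eqref{S decay}) matches the paper, but the step where you eliminate the stray $\mathcal{M}(A)$ coming from \eqref{ci4} is exactly the crux of the proposition, and your treatment of it has genuine gaps. First, the near-past piece does not fold into the recursion: splitting $[T/A^2,T]$ into dyadic intervals $[2^{-k-1}T,2^{-k}T]$ and applying dual Strichartz gives contributions of size $\mathcal{N}(A2^{-k/2})$ for $0\le k\lesssim 2\log_2 A$, and the low end of this sum is $\mathcal{N}(O(1))\lesssim_u 1$ by \eqref{ctrivb}, with no factor $(N/A)^s$ attached; such terms are not majorized by $\sum_{N\le\eta A}(N/A)^s\mathcal{S}(N)+A^{-1/d^2}$. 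Going backward in time the cutoff $AT^{-1/2}$ sits at ever smaller multiples of the natural frequency scale $t^{-1/2}$, i.e.\ you are moving toward where the self-similar solution is rough, so no smallness is available there. Second, the far-past piece cannot be made small in $L^2_x$ by pointwise kernel bounds plus almost periodicity: in $\bigl\|\int_0^{T/A^2}e^{i(T-t)\Delta}P_{>AT^{-1/2}}F(u(t))\,dt\bigr\|_{L^2_x}$ the target variable $x$ is unconstrained (you take a global $L^2_x$ norm), and for source points $|y|\lesssim C(\eta)\sqrt t$ the kernel in Lemma~\ref{lr:propag est L} is of full dispersive size $|T-t|^{-d/2}$ on the whole shell $|x-y|\sim AT^{1/2}$, whose measure is $\sim(AT^{1/2})^d$; so there is no gain in $A$, and in any case the spatial localization of $F(u(t))$ holds only modulo small-mass tails. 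Note also that quantitative smallness of $\mathcal{M}(A)$ is essentially Corollary~\ref{decay M,S,N}, which the paper proves only \emph{after} Proposition~\ref{quant prop} via the forward-in-time Duhamel identity \eqref{N-->M} (where the effective frequencies $2^{k/2}A$ increase and $\mathcal{N}$-decay is already known), so your route is circular unless the backward argument stood on its own, which it does not.

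The paper's proof is structured precisely to avoid any bound on $\mathcal{M}(A)$ at frequency $A$: it estimates the symmetric Strichartz norm on $[T,2T]$ directly from Duhamel based at $T/2$, bounds the inhomogeneous term by $\mathcal{N}(A/2)$ (then Lemma~\ref{nle} with $s+\eps$ and Lemma~\ref{qualit lemma}), and controls the free evolution $P_{BT^{-1/2}}e^{i(t-\frac T2)\Delta}u(\tfrac T2)$ by interpolating Shao's radial Strichartz estimate (Lemma~\ref{L:Shuanglin}) against a dispersive $L^{2d/(d-2)}_{t,x}$ bound; the latter is obtained by sending the backward Duhamel integral to $t=0$, where the $L^{2d/(d+2)}_x\to L^{2d/(d-2)}_x$ dispersive estimate with decay $|t-t'|^{-1}$ and a H\"older gain of $\tau^{1/2}$ on each dyadic time block makes the sum over small times converge. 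The output is always measured in spacetime Lebesgue norms, never in $L^2_x$ at a fixed time, and the gain $B^{-3/(2d^2)}$ per dyadic frequency (hence $A^{-1/d^2}$ after summation and the later iteration) comes entirely from this radial-improvement/dispersive interpolation. Some substitute for that ingredient is indispensable; without it your argument has no source for the $A^{-1/d^2}$ term in \eqref{recur}.
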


\begin{proof} Fix $\eta\in (0,1)$ and $0<s<1+\frac 4d$.  To establish \eqref{recur}, it suffices to show
\begin{align}\label{recur-0}
\| u_{>A T^{-1/2}} \|_{L^{\frac{2(d+2)}d}_{t,x}([T,2T] \times \R^d)}
\lesssim_u \sum_{N\leq \eta A}\bigl( \tfrac NA\bigr)^{s+\eps} \mathcal{S}(N) + A^{-\frac{3}{2d^2}}
\end{align}
for all $T > 0$ and some small $\eps>0$, since then \eqref{recur} follows by requiring $\eta$ to be small
and $A$ to be large, both depending upon $u$.

Fix $T>0$.  By writing the Duhamel formula \eqref{old duhamel} beginning at $\frac T2$ and then using Lemma~\ref{L:strichartz},
we obtain
\begin{align*}
\| u_{>A T^{-1/2}} \|_{L^{\frac{2(d+2)}d}_{t,x}([T,2T] \times \R^d)}
&\lesssim \| P_{>A T^{-1/2}} e^{i(t-\frac T2)\Delta} u(\tfrac T2) \|_{L^{\frac{2(d+2)}d}_{t,x}([T,2T] \times \R^d)}\\
&\qquad + \| P_{>A T^{-1/2}} F(u) \|_{L^{\frac{2(d+2)}{d+4}}_{t,x}([\frac T2,2T] \times \R^d)}.
\end{align*}

First, we consider the second term.  By \eqref{cadef}, we have
$$ \| P_{>A T^{-1/2}} F(u) \|_{L^{\frac{2(d+2)}{d+4}}_{t,x}([\frac T2,2T] \times \R^d)} \lesssim \mathcal{N}(A/2).$$
Using Lemma~\ref{nle} (with $\beta=1$ and $s$ replaced by $s+\eps$ for some $0<\eps<1+\frac 4d-s$) combined with Lemma~\ref{qualit lemma}
(choosing $A$ sufficiently large depending on $u$, $s$, and $\eta$), and \eqref{ctrivb}, we derive
$$
\| P_{>A T^{-1/2}} F(u) \|_{L^{\frac{2(d+2)}{d+4}}_{t,x}([\frac T2,2T] \times \R^d)} \lesssim_u \text{RHS\eqref{recur-0}}.
$$
Thus, the second term is acceptable.

We now consider the first term.  It suffices to show
\begin{align}\label{ss decay}
\| P_{>A T^{-1/2}} e^{i(t-\frac T2)\Delta} u(\tfrac T2) \|_{L^{\frac{2(d+2)}d}_{t,x}([T,2T] \times \R^d)}
\lesssim_u A^{-\frac{3}{2d^2}},
\end{align}
which we will deduce by first proving two estimates at a single frequency scale, interpolating between them, and then summing.

From Lemma \ref{L:Shuanglin} and mass conservation, we have
\begin{align}\label{ss Shuanglin}
\| P_{B T^{-1/2}} e^{i(t-\frac T2)\Delta} u(\tfrac T2) \|_{L^q_{t,x}([T,2T] \times \R^d)}
\lesssim_{u,q} (B T^{-1/2})^{\frac d2-\frac{d+2}{q}}
\end{align}
for all $\frac{4d+2}{2d-1} < q \leq \frac{2(d+2)}d$ and $B>0$.  This is our first estimate.

Using the Duhamel formula \eqref{old duhamel}, we write
$$ P_{B T^{-1/2}} e^{i(t-\frac T2)\Delta} u(\tfrac T2) = P_{B T^{-1/2}} e^{i(t-\eps)\Delta} u(\eps)
- i \int_\eps^{\frac T2} P_{B T^{-1/2}} e^{i(t-t')\Delta} F(u(t'))\, dt'$$ for any $\eps > 0$. By self-similarity, the former
term converges strongly to zero in $L_x^2$ as $\eps \to 0$. Convergence to zero in $L_x^{2d/(d-2)}$ then follows from
Lemma~\ref{Bernstein}. Thus, using H\"older's inequality followed by the dispersive estimate \eqref{dispersive-p},
and then \eqref{endpoint strich}, we estimate
\begin{align*}
\| P_{B T^{-1/2}} e^{i(t-\frac T2)\Delta} & u(\tfrac T2)\|_{L^{\frac{2d}{d-2}}_{t,x}([T,2T] \times \R^d)} \\
&\lesssim T^{\frac{d-2}{2d}}\Bigl\|\int_0^{\frac T2} \frac 1{t-t'} \|F(u(t'))\|_{L_x^{\frac{2d}{d+2}}}\, dt'\Bigl\|_{L_t^\infty([T,2T])}\\
&\lesssim T^{-\frac{d+2}{2d}} \| F(u) \|_{L^1_tL_x^{\frac{2d}{d+2}}((0,\frac T2] \times \R^d)} \\
&\lesssim T^{-\frac{d+2}{2d}} \sum_{0<\tau\leq\frac T4} \| F(u) \|_{L^1_tL_x^{\frac{2d}{d+2}}([\tau,2\tau] \times \R^d)} \\
&\lesssim T^{-\frac{d+2}{2d}} \sum_{0<\tau\leq\frac T4} \tau^{1/2} \| u \|_{L^2_tL_x^{\frac{2d}{d-2}}([\tau,2\tau] \times \R^d)}
        \| u \|_{L^\infty_tL^2_x([\tau,2\tau] \times \R^d)}^{\frac 4d} \\
&\lesssim_u T^{-1/d}.
\end{align*}

Interpolating between the estimate just proved and \eqref{ss Shuanglin} with $q=\frac{2d(d+2)(4d-3)}{4d^3-3d^2+12}$, we obtain
$$
\| P_{B T^{-1/2}} e^{i(t-\frac T2)\Delta} u(\tfrac T2) \|_{L^{\frac{2(d+2)}d}_{t,x}([T,2T] \times \R^d)}
\lesssim_u B^{-\frac{3}{2d^2}}.
$$
Summing this over dyadic $B\geq A$ yields \eqref{ss decay} and hence \eqref{recur-0}.

We now justify \eqref{S decay}.  Given an integer $K\geq 4$, we set $\eta=2^{-K}$.
Then, there exists $A_0$ depending on $u$ and $K$, so that \eqref{recur}
holds for $A\geq A_0$.  By \eqref{ctrivb}, we need only bound $\mathcal{S}(A)$ for $A\geq A_0$.

Let $k\geq 0$ and set $A=2^k A_0$ in \eqref{recur}.  Then, writing $N=2^{l}A_0$ and using \eqref{ctrivb},
\begin{align*}
\mathcal{S}(2^k A_0)
&\leq \sum_{l\leq k-K} 2^{-(k-l)s}\mathcal{S}(2^{l}A_0) + (2^kA_0)^{-\beta}\\
&\leq \sum_{l=0}^{k-K} 2^{-(k-l)s}\mathcal{S}(2^{l}A_0) + \frac{2^{-ks}}{1-2^{-s}}\mathcal{S}(0) + 2^{-k\beta}A_0^{-\beta},
\end{align*}
where $\beta:=d^{-2}$.  Setting $s=1$ and applying Lemma~\ref{Gronwall}
with $x_k=\mathcal{S}(2^k A_0)$ and $b_k=O_u(2^{-k\beta})$, we deduce
$$
\mathcal{S}(2^k A_0)\lesssim_u 2^{-k/d^2},
$$
provided $K$ is chosen sufficiently large.  This gives the necessary bound on $\mathcal{S}$.
\end{proof}

\begin{corollary}\label{decay M,S,N}
For any $A>0$ we have
\begin{align*}
\mathcal{M}(A)+\mathcal{S}(A)+\mathcal{N}(A)\lesssim_u A^{-1/d^2}.
\end{align*}
\end{corollary}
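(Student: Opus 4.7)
The $\mathcal{S}$-bound is already contained in Proposition~\ref{quant prop}. The plan is to promote this to the analogous bound for $\mathcal{N}$ by a direct application of Lemma~\ref{nle}, and then to control $\mathcal{M}$ by a tail sum of $\mathcal{N}$'s using the forward Duhamel formula of Lemma~\ref{duhamel L} together with Strichartz.

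\emph{Step 1: bound on $\mathcal{N}$.} I would apply Lemma~\ref{nle} with $\beta=1$ and $s=1$. Feeding in $\mathcal{S}(N)\lesssim_u N^{-1/d^2}$, the first term of \eqref{nle est} becomes the dyadic sum
\begin{equation*}
\sum_{N\leq\eta A}\tfrac{N}{A}\,\mathcal{S}(N)\lesssim_u A^{-1}\sum_{N\leq\eta A}N^{1-1/d^2}\lesssim_\eta A^{-1/d^2},
\end{equation*}
which is geometric and controlled by its top scale. For the second term, monotonicity of $\mathcal{S}$ in its argument together with $\mathcal{S}(A)\lesssim_u A^{-1/d^2}$ gives a bound of the form $A^{-1/d^2-c_d}$ for some $c_d>0$ arising from the extra gain in the argument $\eta A^{1/(2(d-1))}$; this is strictly better than the desired $A^{-1/d^2}$. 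The third term $A^{-2/d^2}[\mathcal{M}(\eta A)+\mathcal{N}(\eta A)]$ is handled by the trivial bound \eqref{ctrivb}, which gives $\lesssim_u A^{-2/d^2}$. Collecting, $\mathcal{N}(A)\lesssim_u A^{-1/d^2}$ for $A$ sufficiently large; for small $A$ the inequality is automatic from \eqref{ctrivb}.

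\emph{Step 2: bound on $\mathcal{M}$.} For any $T>0$, Lemma~\ref{duhamel L} gives
\begin{equation*}
P_{>AT^{-1/2}}u(T)=\lim_{T'\to\infty}i\int_T^{T'}e^{i(T-t')\Delta}P_{>AT^{-1/2}}F(u(t'))\,dt'
\end{equation*}
as a weak limit in $L_x^2$. By lower semicontinuity of the norm under weak limits and the dual form of Strichartz (Lemma~\ref{L:strichartz} with vanishing initial data),
\begin{equation*}
\|P_{>AT^{-1/2}}u(T)\|_{L_x^2}\lesssim\|P_{>AT^{-1/2}}F(u)\|_{L^{2(d+2)/(d+4)}_{t,x}([T,\infty)\times\R^d)}.
\end{equation*}
I would then split $[T,\infty)=\bigcup_{k\geq 0}[2^kT,2^{k+1}T]$ and note the key scaling identity $AT^{-1/2}=A2^{k/2}(2^kT)^{-1/2}$; the triangle inequality in $L^{2(d+2)/(d+4)}_{t,x}$ (whose exponent exceeds $1$) therefore yields
\begin{equation*}
\mathcal{M}(A)\leq\sum_{k\geq 0}\mathcal{N}(A2^{k/2}).
\end{equation*}
Inserting the bound from Step~1 produces the geometric tail $\sum_{k\geq 0}(A2^{k/2})^{-1/d^2}\lesssim_u A^{-1/d^2}$, completing the proof.

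The only real subtlety is the Duhamel step in Step~2: one must use the \emph{weak-limit} representation of Lemma~\ref{duhamel L} so as to discard any free-evolution contribution from $t=+\infty$, which would otherwise only be controlled by the full conserved mass rather than by $\mathcal{N}$. Once this is in place, the dyadic accounting and the arithmetic of Step~1 close the argument without need for a further Gronwall iteration of the type used in Proposition~\ref{quant prop}.
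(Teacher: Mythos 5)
Your proposal is correct and follows essentially the same route as the paper: the $\mathcal{S}$-bound from Proposition~\ref{quant prop}, the $\mathcal{N}$-bound from Lemma~\ref{nle} with $\beta=1$ together with \eqref{ctrivb}, and the $\mathcal{M}$-bound via the forward-in-time Duhamel formula of Lemma~\ref{duhamel L} plus Strichartz, yielding $\mathcal{M}(A)\lesssim\sum_{k\geq0}\mathcal{N}(2^{k/2}A)$ and summing the geometric tail. The only cosmetic difference is that you justify the limiting step by weak lower semicontinuity of the norm, whereas the paper notes the limit is in fact strong because of the frequency projection and $N(t)=t^{-1/2}\to0$; either justification suffices.
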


\begin{proof}
The bound on $\mathcal{S}$ was proved in the previous proposition.
The bound on $\mathcal{N}$ follows from this, Lemma~\ref{nle} with $\beta=1$, and \eqref{ctrivb}.

We now turn to the bound on $\mathcal{M}$.  By Lemma~\ref{duhamel L},
\begin{align}\label{ss forw}
\|P_{>AT^{-1/2}}u(T)\|_2 \lesssim \sum_{k=0}^\infty \Bigl\| \int_{2^kT}^{2^{k+1}T} e^{i(T-t')\Delta} P_{>AT^{-1/2}}
F(u(t'))\,dt' \Bigr\|_2,
\end{align}
where weak convergence has become strong convergence because of the frequency projection and the fact that $N(t)=t^{-1/2}\to 0$
as $t\to \infty$.  Intuitively, the reason for using \eqref{duhamel} forward in time is that the solution becomes smoother as
$N(t)\to 0$.

Combining \eqref{ss forw} with Lemma~\ref{L:strichartz} and \eqref{cadef}, we get
\begin{align}\label{N-->M}
\mathcal{M}(A)=\sup_{T>0}\|P_{>AT^{-1/2}}u(T)\|_2 \lesssim \sum_{k=0}^\infty \mathcal{N}(2^{k/2}A).
\end{align}
The desired bound on $\mathcal{M}$ now follows from that on $\mathcal{N}$.
\end{proof}

\begin{proof}[Proof of Theorem~\ref{ss-sob-thm}]
Let $0<s<1+\frac 4d$.  Combining Lemma~\ref{nle} (with $\beta= 1-\frac 1{2d^2}$),
\eqref{ci4}, and \eqref{N-->M}, we deduce that if
$$
\mathcal{S}(A)+\mathcal{M}(A)+\mathcal{N}(A) \lesssim_u A^{-\sigma}
$$
for some $0<\sigma<s$, then
$$
\mathcal{S}(A)+\mathcal{M}(A)+\mathcal{N}(A)
\lesssim_u A^{-\sigma}\Bigl(A^{-\frac{s-\sigma}{2d^2}} + A^{-\frac{(d+1)(3d-2)\sigma}{2d^3(d-1)}}
    + A^{-\frac{3 -\sigma}{2d^2}-\frac{d^2-2}{2d^4}}\Bigr).
$$
More precisely, Lemma~\ref{nle} provides the bound on $\mathcal{N}(A)$, then \eqref{N-->M} gives the bound on $\mathcal{M}(A)$
and then finally \eqref{ci4} gives the bound on $\mathcal{S}(A)$.

Iterating this statement shows that $u(t)\in H^{s}_x(\R^d)$ for all $0<s<1+\frac 4d$.
Note that Corollary~\ref{decay M,S,N} allows us to begin the iteration with $\sigma=d^{-2}$.
\end{proof}

\section{An in/out decomposition}\label{S:in/out}

In this section, we will often write radial functions on $\R^d$ just in terms of the radial variable.  With this convention,
\begin{align*}
f(r)= r^{\frac{2-d}{2}} \int_0^\infty J_{\frac{d-2}2}(k r)  \hat f(k) \,k^{\frac d2}\, dk
\quad\text{and}\quad
\hat f(k)= k^{\frac{2-d}{2}} \int_0^\infty J_{\frac{d-2}2}(k r)  f(r) \,r^{\frac d2}\, dr,
\end{align*}
as can be seen from \cite[Theorem~IV.3.3]{stein:weiss}.  Here $J_\nu$ denotes the Bessel function of order $\nu$.
In particular, $g(k,r):=r^{\frac{2-d}2} J_{\frac{d-2}2}(k r)$ solves the radial Helmholtz equation
\begin{equation}\label{Helmholtz}
- g_{rr} - \tfrac{d-1}{r} g_r = k^2 g,
\end{equation}
which corresponds to the fact that $g(k,r)$ represents a spherical standing wave of frequency $k^2/(2\pi)$.
Incoming and outgoing spherical waves are represented by two further solutions of \eqref{Helmholtz}, namely,
$$
g_-(k,r):= r^{\frac{2-d}2} H_{\frac{d-2}2}^{(2)}(k r)
\quad\text{and}\quad
g_+(k,r):= r^{\frac{2-d}2} H_{\frac{d-2}2}^{(1)}(k r),
$$
respectively.  Note that $g=\frac12 g_+ + \frac12 g_-$.  This leads us to define the projection onto outgoing spherical waves by
\begin{align}\label{P+ defn d}
[P^+ f](r) &= \tfrac12 \int_0^\infty r^{\frac{2-d}{2}} H^{(1)}_{\frac{d-2}2}(k r) \hat f(k)\,k^{\frac d2}\,dk \\
&=\tfrac12 r^{\frac{2-d}{2}} \int_0^\infty
        \biggr[\int_0^\infty H^{(1)}_{\frac{d-2}2}(k r) J_{\frac{d-2}2}^{\vphantom{(}}(k \rho) \,k\,dk\biggr]
        f(\rho) \,\rho^{\frac d2}\, d\rho \notag\\
&=\tfrac12 f(r) + \tfrac{i}{\pi} \int_0^\infty \frac{r^{2-d}\,f(\rho)\,\rho^{d-1}\,d\rho}{r^2-\rho^2}. \notag
\end{align}
In order to derive the last equality we used \cite[\S 6.521.2]{GR} together with analytic continuation.
Similarly, we define the projection onto incoming waves by
\begin{align*}
[P^- f](r) &= \tfrac12 \int_0^\infty r^{\frac{2-d}{2}} H^{(2)}_{\frac{d-2}2}(k r) \hat f(k)\,k^{\frac d2}\,dk \\
&=\tfrac12 f(r) - \tfrac{i}{\pi} \int_0^\infty \frac{r^{2-d}\,f(\rho)\,\rho^{d-1}\,d\rho}{r^2-\rho^2}.
\end{align*}
Note that the kernel of $P^-$ is the complex conjugate of that belonging to $P^+$, as is required by time-reversal symmetry.

We will write $P^\pm_N$ for the product $P^\pm P_N$.

\begin{remark} For $f(\rho)\in L^2(\rho^{d-1}\,d\rho)$,
$$
\int_0^\infty \bigl| f(\rho) \bigr|^2 \rho^{d-1}\,d\rho = \tfrac12 \int \bigl| s^{\frac{d-2}4} f(\sqrt{s}) \bigr|^2 \,ds
$$
and with $t=r^2$,
\begin{align}
\int_0^\infty \frac{r^{2-d}\,f(\rho)\,\rho^{d-1}\,d\rho}{r^2-\rho^2}
&=\tfrac12 t^{-\frac{d-2}4} \int_0^\infty \Bigl(\frac st\Bigr)^{\frac{d-2}4}\,\frac{s^{\frac{d-2}4}f(\sqrt{s})\,ds}{t-s}.
\end{align}
Thus $P^+:L^2(\R^d)\to L^2(\R^d)$ is bounded if and only if the Hilbert transform is bounded in the weighted space
$L^2([0,\infty),t^{-(d-2)/2}\,dt)$.  Thus $P^+$ is unbounded on $L^2(\R^d)$ for $d\geq 4$.
\end{remark}

\begin{lemma}[Kernel estimates]\label{P:kernel est}
For $|x|\gtrsim N^{-1}$ and $t\gtrsim N^{-2}$, the integral kernel
obeys
\begin{equation*}
\bigl| [P^\pm_N e^{\mp it\Delta}](x,y) \bigr| \lesssim \begin{cases}
    \bigl(|x||y|\bigr)^{-\frac{d-1}2}|t|^{-\frac12}  &: \  |y|-|x|\sim  Nt \\[1ex]
    \frac{N^d}{(N|x|)^{\frac{d-1}2}\langle N|y|\rangle^{\frac{d-1}2}} \bigl\langle N^2t + N|x| - N|y| \bigr\rangle^{-m}
            &: \  \text{otherwise}\end{cases}
\end{equation*}
for any $m\geq 0$.  For $|x|\gtrsim N^{-1}$ and $|t|\lesssim N^{-2}$, the integral kernel obeys
\begin{equation*}
\bigl| [P^\pm_N e^{\mp it\Delta}](x,y) \bigr|
    \lesssim  \frac{N^d}{(N|x|)^{\frac{d-1}2}\langle N|y|\rangle^{\frac{d-1}2}} \bigl\langle N|x| - N|y| \bigr\rangle^{-m}
\end{equation*}
for any $m\geq 0$.
\end{lemma}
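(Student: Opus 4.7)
The plan is to reduce the kernel to a one-dimensional oscillatory integral in the radial frequency variable and then carry out a stationary/non-stationary phase analysis. By time-reversal symmetry it suffices to treat $P^+_N e^{-it\Delta}$. Combining the definition of $P^+$ in \eqref{P+ defn d} with the radial Fourier representation recalled at the start of the section, and incorporating both the Littlewood--Paley cutoff $\psi(k/N)$ and the Fourier multiplier $e^{itk^2}$ of $e^{-it\Delta}$, the radial kernel takes the form
\[
K_N^+(r,\rho,t) = \tfrac12 (r\rho)^{\frac{2-d}{2}} \int_0^\infty H^{(1)}_{\frac{d-2}{2}}(kr)\, J_{\frac{d-2}{2}}(k\rho)\, \psi(k/N)\, e^{itk^2}\, k\,dk.
\]
The substitution $k=Nl$ restricts the integration to $l\in[\tfrac12,2]$, and the hypothesis $|x|\gtrsim N^{-1}$ guarantees $kr\gtrsim 1$ throughout, placing the Hankel factor in its large-argument regime.

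I would next invoke the classical expansion $H^{(1)}_\nu(z)=\sqrt{2/(\pi z)}\,e^{i(z-\nu\pi/2-\pi/4)}(1+O(z^{-1}))$ to rewrite $H^{(1)}_{(d-2)/2}(kr)$ as an outgoing exponential times a classical symbol in $kr$, and split into two subregimes according to the size of $N\rho$. In the regime $N\rho\gtrsim 1$, applying the same expansion to both halves of $J_{(d-2)/2}(k\rho)=\tfrac12 H^{(1)}_{(d-2)/2}(k\rho)+\tfrac12 H^{(2)}_{(d-2)/2}(k\rho)$ decomposes the integrand into two phase branches $\phi_\pm(l)=Nl(Ntl+r\pm\rho)$. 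The $(+)$ branch is non-stationary in $l>0$ for $t>0$, so repeated integration by parts yields polynomial decay in $\langle N^2t+Nr+N\rho\rangle$, which dominates the claimed bound. The $(-)$ branch has a unique stationary point $l_*=(\rho-r)/(2Nt)$; this lies in the support of $\psi$ precisely when $\rho-r\sim Nt$, and the standard stationary phase formula (with $|\phi''_-(l_*)|=2N^2t$) then produces the on-light-cone estimate $(|x||y|)^{-(d-1)/2}|t|^{-1/2}$. Off the light cone, $|\phi'_-(l)|\gtrsim N|Nt+r-\rho|$ uniformly on the integration range, and repeated integration by parts gives the off-cone decay $\langle N^2t+N|x|-N|y|\rangle^{-m}$.

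In the complementary regime $N\rho\lesssim 1$, I would use the power-series bound $|J_\nu(k\rho)|\lesssim (k\rho)^\nu$ (with analogous estimates for derivatives); the $J$ factor is then a smooth non-oscillatory amplitude and the only phase remaining is $N^2tl^2+Nlr$, whose derivative is bounded below by $N^2t+Nr\gtrsim \langle N^2t+Nr-N\rho\rangle$ since $N\rho\lesssim 1\lesssim Nr$. Integration by parts yields the claimed $\langle\cdot\rangle^{-m}$ weight, while collecting the prefactors produces amplitude of size $N^{(d+1)/2}|x|^{-(d-1)/2}$, which matches $N^d(N|x|)^{-(d-1)/2}\langle N|y|\rangle^{-(d-1)/2}$ since $\langle N|y|\rangle\sim 1$ in this regime. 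The short-time case $|t|\lesssim N^{-2}$ is handled identically, except that the Schr\"odinger phase satisfies $|tk^2|\lesssim 1$ on the support of $\psi$ and can be absorbed into the smooth amplitude; the same integration-by-parts analysis on the remaining spatial phase yields the stated $\langle N|x|-N|y|\rangle^{-m}$ decay.

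The main technical obstacle is controlling the error terms in the Hankel and Bessel asymptotic expansions across the repeated differentiations required by integration by parts. Each derivative falling on the Hankel remainder $O((kr)^{-1})$ costs an $r^{-1}$, which must be balanced against the natural frequency scale $N$; this is possible precisely because of the hypothesis $kr\gtrsim 1$. A clean organizational device is to regard the Hankel and Bessel asymptotics as classical symbols of order $-1/2$ in the variables $kr$ and $k\rho$ respectively, so that each $l$-derivative loses at most a factor comparable to the leading amplitude and the standard integration-by-parts bookkeeping then applies uniformly across all regimes.
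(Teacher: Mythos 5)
Your proposal is correct and follows essentially the same route as the paper: both reduce the kernel to the one-dimensional oscillatory integral \eqref{kernel}, use the Hankel asymptotics (valid since $|x|\gtrsim N^{-1}$) and the incoming/outgoing decomposition of the Bessel factor as symbol-type amplitudes, apply stationary phase only on the resonant branch where $|y|-|x|\sim Nt$ and integrate by parts otherwise, and handle $|t|\lesssim N^{-2}$ by absorbing $e^{itk^2}$ into the amplitude. The only cosmetic difference is your case split on the size of $N|y|$, which the paper avoids by using the uniform representation $J_{\frac{d-2}2}(r)=a(r)e^{ir}\langle r\rangle^{-1/2}+\bar a(r)e^{-ir}\langle r\rangle^{-1/2}$ with symbol bounds valid for all $r$.
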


\begin{proof}
The proof is an exercise in stationary phase.  We will only provide the details for $P_N^+ e^{-it\Delta}$,
the other kernel being its complex conjugate. By \eqref{P+ defn d} we have the following formula for the kernel:
\begin{align}\label{kernel}
[P^+_N e^{-it\Delta}](x,y) =
    \tfrac12 \bigl(|x||y|\bigr)^{-\frac{d-2}2}\int_0^\infty H^{(1)}_{\frac{d-2}2}(k |x|) J_{\frac{d-2}2}^{\vphantom{(}}(k |y|)
    e^{itk^2} \psi\bigl(\tfrac kN\bigr)\,k\,dk
\end{align}
where $\psi$ is the multiplier from the Littlewood--Paley projection.  To proceed, we use the following information
about Bessel/Hankel functions:
\begin{align}\label{Bessel symbol}
J_{\frac{d-2}2}(r) = \frac{a(r) e^{ir}}{\langle r\rangle^{1/2}} + \frac{\bar a(r) e^{-ir}}{\langle r\rangle^{1/2}},
\end{align}
where $a(r)$ obeys the symbol estimates
\begin{equation}\label{symbol type}
\Bigr| \frac{\partial^m a(r)}{\partial r^m} \Bigr| \lesssim \langle r \rangle^{-m}
    \quad \text{for all $m\geq0$.}
\end{equation}
The Hankel function $H^{(1)}_{\frac{d-2}2}(r)$ has a singularity at $r=0$; however, for $r\gtrsim 1$,
\begin{align}\label{Hankel symbol}
H^{(1)}_{\frac{d-2}2}(r) = \frac{b(r) e^{ir}}{r^{1/2}}
\end{align}
for a smooth function $b(r)$ obeying \eqref{symbol type}.  As we assume $|x|\gtrsim N^{-1}$, the singularity
does not enter into our considerations.

Substituting \eqref{Bessel symbol} and \eqref{Hankel symbol} into \eqref{kernel}, we see that a stationary phase point
can only occur in the term containing $\bar a(r)$ and even then only if $|y|-|x|\sim Nt$.  In this case,
stationary phase yields the first estimate.  In all other cases, integration by parts yields the second estimate.

The short-time estimate is also a consequence of \eqref{kernel} and stationary phase techniques.  Since $t$ is so small,
$e^{ik^2t}$ shows no appreciable oscillation and can be incorporated into $\psi(\frac kN)$.
For $\bigl||y|-|x|\bigr|\leq N^{-1}$, the result follows from the naive $L^1$ estimate.
For larger $|x|-|y|$, one integrates by parts $m$ times.
\end{proof}

\begin{lemma}[Properties of $P^\pm$]\label{P:P properties}\leavevmode
\begin{SL}
\item $P^+ + P^- $ acts as the identity on $L^2_\rad(\R^d)$.
\item Fix $N>0$.  For any spherically symmetric function $f\in L_x^2(\R^d)$,
$$
\bigl\|P^\pm P_{\geq N} f \bigr\|_{L^2_x(|x|\geq \frac 1{100} N^{-1})} \lesssim \bigl\| f \bigr\|_{L^2_x(\R^d)}
$$
with an $N$-independent constant.
\end{SL}
\end{lemma}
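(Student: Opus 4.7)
For part (i), one can either add the two second-line formulas for $P^\pm f$ and observe that the principal-value integrals cancel, giving $(P^+ + P^-)f = \tfrac12 f + \tfrac12 f = f$; or use $\tfrac12(H^{(1)}_\nu + H^{(2)}_\nu) = J_\nu$ in the first-line formulas together with the Hankel/Fourier inversion formula for radial functions. Either route is essentially a one-line verification.

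For part (ii), the plan is to eliminate $N$ by scaling and then recognize the nontrivial part of $P^\pm$ as a Hilbert transform on the half-line, with the spatial cutoff taming what would otherwise be a non-$A_2$ weight. Let $U_N f(x) := N^{d/2} f(Nx)$ denote the $L^2$-unitary dilation. Scale invariance of the Helmholtz decomposition, checked directly from the explicit formula, gives $P^\pm U_N = U_N P^\pm$; a quick Fourier-side check gives $P_{\geq N} = U_N P_{\geq 1} U_N^{-1}$; and the cutoff $\chi_N := \mathbf{1}_{\{|x| \geq 1/(100N)\}}$ obeys $\chi_N U_N = U_N \chi_1$ by direct inspection. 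Composing, $\chi_N P^\pm P_{\geq N} = U_N (\chi_1 P^\pm P_{\geq 1}) U_N^{-1}$, so the operator norm is automatically independent of $N$; since $P_{\geq 1}$ is $L^2$-bounded, matters reduce to the single-scale estimate $\|\chi_1 P^\pm h\|_{L^2(\R^d)} \lesssim \|h\|_{L^2(\R^d)}$ for radial $h$.

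Write $P^\pm = \tfrac12 I \pm \tfrac{i}{\pi}\, \tilde K$ from the explicit formula, where $\tilde K h(r) = \text{P.V.}\int_0^\infty \frac{r^{2-d} h(\rho)\, \rho^{d-1}}{r^2-\rho^2}\, d\rho$; the identity piece is trivial. Substituting $s = r^2$, $t = \rho^2$ and setting $G(t) := t^{(d-2)/2} h(\sqrt t)\,\mathbf{1}_{t>0}$, a short computation identifies
\[
\tilde K h(r) = \tfrac{\pi}{2}\, r^{2-d}\, (HG)(r^2),
\]
where $H$ is the ordinary Hilbert transform on $\R$; moreover $\|G\|_{L^2(\R)}^2 \sim \|h\|_{L^2(\R^d)}^2$ for radial $h$, again by change of variables. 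The same substitution applied to the restricted $L^2$-norm recasts the target as
\[
\|\chi_1 \tilde K h\|_{L^2(\R^d)}^2 \sim \int_{1/10000}^\infty s^{(2-d)/2}\, |HG(s)|^2 \, ds.
\]

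The main obstacle is exactly the weight $s^{(2-d)/2}$: it fails the Muckenhoupt $A_2$ condition for $d \geq 4$, which is precisely why $P^\pm$ is unbounded on all of $L^2_x(\R^d)$ in those dimensions (as noted in the remark). The spatial cutoff is what rescues us: on the region $s \geq 1/10000$ the weight is bounded by the dimensional constant $10000^{(d-2)/2}$, so the last integral is dominated by $\|HG\|_{L^2(\R)}^2 \lesssim \|G\|_{L^2(\R)}^2 \sim \|h\|_{L^2(\R^d)}^2$ via the standard $L^2$-boundedness of $H$. Unwinding the scaling reduction yields the desired $N$-independent bound.
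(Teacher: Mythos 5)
Part (i) and your scaling reduction to $N=1$ are fine and agree with the paper, but part (ii) has a genuine gap, and it sits in the single displayed claim $\|G\|_{L^2(\R)}^2\sim\|h\|_{L^2(\R^d)}^2$. With $G(t)=t^{(d-2)/2}h(\sqrt t)$, the change of variables $t=\rho^2$ gives
\[
\|G\|_{L^2(\R)}^2=2\int_0^\infty |h(\rho)|^2\,\rho^{2d-3}\,d\rho ,
\]
which carries an extra weight $\rho^{d-2}$ relative to $\int_0^\infty|h(\rho)|^2\rho^{d-1}\,d\rho\sim\|h\|_{L^2(\R^d)}^2$; the two are comparable only in $d=2$, not for the dimensions $d\geq3$ treated here. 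So what your argument actually proves is $\|\chi_1\tilde K h\|_{L^2(\R^d)}\lesssim \bigl\||x|^{(d-2)/2}h\bigr\|_{L^2(\R^d)}$, not the desired bound. The honest unitary substitution is $h\mapsto t^{(d-2)/4}h(\sqrt t)$, under which $\tilde K$ becomes the Hilbert transform conjugated by $t^{(d-2)/4}$ --- precisely the weighted problem of the paper's Remark; your cutoff only truncates the \emph{output} variable $s$, while the troublesome weight also sits on the \emph{input} side, so it is not ``tamed'' by restricting $s\geq 1/10000$. The weight problem did not disappear; it was absorbed into the incorrect norm identification.

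Worse, the statement you reduced to --- $\|\chi_1 P^\pm h\|_{L^2}\lesssim\|h\|_{L^2}$ for \emph{all} radial $h$, having discarded $P_{\geq 1}$ as ``merely bounded'' --- is false in high dimensions, so no repair of the computation can rescue this route. For instance, in $d=6$ let $h_A$ be the radial indicator of the annulus $A\leq|x|\leq A+1$; then $\|h_A\|_{L^2}^2\sim A^5$, while for $2\leq r\leq A/2$ the kernel $r^{2-d}\rho^{d-1}(r^2-\rho^2)^{-1}$ has fixed sign and size $\sim r^{-4}A^{3}$, so that $\|\chi_1\tilde K h_A\|_{L^2}^2\gtrsim A^6\int_2^{A/2}r^{-3}\,dr\sim A^6$, and the ratio blows up as $A\to\infty$. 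The frequency projection is therefore essential, and this is exactly how the paper uses it: the combination of $P_{\geq N}$ and the spatial cutoff confines the analysis to $kr\gtrsim1$, where $H^{(1)}_{(d-2)/2}(kr)$ oscillates with amplitude $\sim(kr)^{-1/2}$ (enough to offset the weight mismatch) rather than exhibiting its $(kr)^{-(d-2)/2}$ singularity; the paper then writes $H^{(1)}_{(d-2)/2}(\rho)=(2/\pi\rho)^{1/2}(1+b(\rho))e^{i(\rho-(d-1)\pi/4)}$ with $b$ a symbol of order $-1$, bounds the main term as a Fourier-multiplier--type kernel, and handles the $b$-correction by Schur's test. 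You need some version of that oscillation argument; a pure Hilbert-transform reduction without the frequency localization cannot work for $d\geq3$ in general.
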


\begin{proof}  Part (i) is immediate from the definition.

We turn now to part (ii).  We only prove the inequality for $P^+$,
as the result for $P^-$ can be deduced from this. Let $\chi$ be a
non-negative smooth function on $\R^+$ vanishing in a neighborhood
of the origin and obeying $\chi(r)=1$ for $r\geq \frac{1}{100}$.
With this definition and \eqref{P+ defn d},
\begin{align*}
\bigl\|P^\pm P_{\geq N} f \bigr\|_{L^2_x(|x|\geq N^{-1})}^2
&\leq \bigl\|\chi(N|x|)P^\pm P_{\geq N} f \bigr\|_{L^2_x(\R^d)}^2\\
&=\int_0^\infty \Bigl|\int_0^\infty H^{(1)}_{\frac{d-2}2}(kr)\hat f(k) k^{\frac d2}
        \bigl(1-\phi\bigl(\tfrac kN\bigr)\bigr)\, dk\Bigr|^2 \chi(Nr)^2 r\,dr,
\end{align*}
where $\phi$ is the Littlewood-Paley cutoff, as in subsection~\ref{ss:basic}.
Note that by scaling, it suffices to treat the case $N=1$.   Because of the cutoffs, the only non-zero
contribution comes from the region $kr\gtrsim 1$.  This allows us to use the following information about
Hankel functions: for $\rho\gtrsim 1$,
$$
H^{(1)}_{\frac{d-2}2}(\rho) = \bigl(\tfrac{2}{\pi \rho}\bigr)^{\frac12} [1+b(\rho)] e^{i\rho-i(d-1)\frac\pi4}
$$
where $b$ is a symbol of order $-1$, that is,
\begin{equation}\label{symbol type -1}
\Bigr| \frac{\partial^m b(\rho)}{\partial \rho^m} \Bigr| \lesssim \langle \rho \rangle^{-m-1}
    \quad \text{for all $m\geq0$;}
\end{equation}
see for example \cite{GR}.  Note that this is more refined than
formula \eqref{Hankel symbol} used in the previous proof. With these
observations, our goal has been reduced to showing that
\begin{align*}
\int_0^\infty \Bigl|\int_0^\infty e^{ikr} \bigl(1+b(kr)\bigr)
        \bigl(1-\phi(k)\bigr)g(k)\, dk\Bigr|^2 \chi(r)^2\,dr
\lesssim \int_0^\infty |g(k)|^2\,dk
\end{align*}
or, equivalently, that
$$
K(k,k'):=\bigl(1-\phi(k)\bigr)\bigl(1-\phi(k')\bigr) \int_0^\infty e^{i(k-k')r} \bigl(1+b(kr)\bigr)\bigl(1+\bar b(k'r)\bigr) \chi(r)^2 \,dr
$$
is the kernel of a bounded operator on $L^2_k([0,\infty))$.  To this end, we will decompose $K$ as the sum of two kernels,
each of which we can estimate.

First, we consider
$$
K_1(k,k') := \bigl(1-\phi(k)\bigr)\bigl(1-\phi(k')\bigr) \int_0^\infty e^{i(k-k')r} \chi(r)^2\,dr.
$$
Without the prefactors, the integral is the kernel of a bounded Fourier multiplier and so a bounded operator on $L^2_k$.  As $\phi$ is
a bounded function, we may then deduce that $K_1$ is itself the kernel of a bounded operator.

Our second kernel is
$$
K_2(k,k') := \bigl(1-\phi(k)\bigr)\bigl(1-\phi(k')\bigr) \!\int_0^\infty\!\! e^{i(k-k')r}
    \bigl[ b(kr) + \bar b(k'r) + b(kr)\bar b(k'r) \bigr] \chi(r)^2 \,dr,
$$
which we will show is bounded using Schur's test.  Note that the factors in front of the integral ensure that the kernel is zero unless
$k\gtrsim 1$ and $k'\gtrsim 1$.  By integration by parts, we see that
$$
K_2(k,k') \lesssim_m |k-k'|^{-m}
$$
for any $m\geq 1$, which offers ample control away from the
diagonal.  To obtain a good estimate near the diagonal, we need to
break the integral into two pieces.  We do this by writing
$1=\chi(r/R)+(1-\chi(r/R))$, with $R\gg1$.  Integrating by parts
once when $r$ is large and not at all when $r$ is small, leads to
\begin{align*}
K_2(k,k')
&\lesssim \frac{1}{|k-k'|} \int \bigl[ \tfrac1{kr^2} + \tfrac1{k'r^2} + \tfrac1{kk'r^3} \bigr]\chi\bigl(\tfrac rR\bigr)
        + \bigl[ \tfrac1{kr} + \tfrac1{k'r} + \tfrac1{kk'r^2} \bigr]\tfrac1R \chi'\bigl(\tfrac rR\bigr)\,dr \\
&\quad + \int \bigl[ \tfrac1{kr} + \tfrac1{k'r} + \tfrac1{kk'r^2} \bigr] \chi(r)^2 \bigl(1-\chi\bigl(\tfrac rR\bigr)\bigr)\,dr\\
&\lesssim \frac{1}{R|k-k'|} + \log(R).
\end{align*}
Choosing $R=|k-k'|^{-1}$ provides sufficient control near the diagonal to complete the application of Schur's test.
\end{proof}

\section{Additional regularity}\label{glob-sob-sec}

This section is devoted to a proof of

\begin{theorem}[Regularity in the global case]\label{glob-sob-thm}
Let $d\geq 3$ and let $u$ be a global spherically symmetric solution to \eqref{nls} that is almost periodic modulo scaling.
Suppose also that $N(t)\lesssim 1$ for all $t\in\R$.  Then $u \in L^\infty_t H^s_x(\R \times \R^d)$ for all $0\leq s< 1+\frac 4d$.
\end{theorem}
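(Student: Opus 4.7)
The plan is to imitate the scheme from Section~\ref{ss-sec}, replacing the maneuver that exploited $N(t)=t^{-1/2}\to 0$ by the in/out decomposition of Section~\ref{S:in/out}. First, for each dyadic $N\geq1$ I would introduce quantities analogous to \eqref{cadef}, e.g.
$$
\mathcal{M}(N):=\sup_{t\in\R}\|P_{\geq N}u(t)\|_{L^2_x},\quad
\mathcal{S}(N):=\sup_{t\in\R}\|P_{\geq N}u\|_{L^{2(d+2)/d}_{t,x}([t,t+1]\times\R^d)},
$$
and $\mathcal{N}(N)$ for the nonlinearity in the adjoint Strichartz norm on the same windows. Since $N(t)\lesssim1$, Definition~\ref{apdef} and Lemma~\ref{spacelemma} yield $\mathcal{M}(N)+\mathcal{S}(N)+\mathcal{N}(N)\lesssim_u1$ for all $N$, and qualitative decay $\mathcal{M}(N),\mathcal{S}(N),\mathcal{N}(N)\to0$ as $N\to\infty$ (exactly as in Lemma~\ref{qualit lemma}).

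Next I would prove a nonlinear estimate in the spirit of Lemma~\ref{nle}: decompose
$$
F(u)=F(u_{\leq\eta N})+O(|u_{\leq\eta N^{\beta}}|^{4/d}u_{>\eta N})+O(|u_{\eta N^\beta<\cdot\leq\eta N}|^{4/d}u_{>\eta N})+O(|u_{>\eta N}|^{1+4/d}),
$$
bounding the smooth bulk piece by Corollary~\ref{cor:s deriv} (i.e.\ fractional chain rule), the mid/high and high/high pieces by H\"older against $\mathcal{S}$, and the widely separated term by bilinear Strichartz (Lemma~\ref{L:bilinear Shao}) combined with Bernstein. This furnishes a recursion of the form
$$
\mathcal{N}(N)\lesssim_u \sum_{M\leq\eta N}(M/N)^s\mathcal{S}(M)+\mathcal{S}(\eta N^{\beta})^{4/d+1}+N^{-\theta}[\mathcal{M}(\eta N^\beta)+\mathcal{N}(\eta N^\beta)]
$$
for appropriate small $\theta,\beta>0$. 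Via the Strichartz inequality this already controls $\mathcal{S}$ in terms of $\mathcal{M}$ and $\mathcal{N}$.

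The essential new ingredient is to bound $\mathcal{M}$ in terms of $\mathcal{N}$, taking the place of the vanishing used in \eqref{ss forw} (which relied on $N(t)\to0$ and is unavailable here). To do this I would invoke Lemma~\ref{duhamel L} in both time directions together with the in/out splitting:
$$
P_{\geq N}u(t)=P^+P_{\geq N}u(t)+P^-P_{\geq N}u(t),
$$
expressing $P^+P_{\geq N}u(t)$ by the backward Duhamel integral on $(-\infty,t)$ and $P^-P_{\geq N}u(t)$ by the forward Duhamel integral on $(t,\infty)$, then slicing each into dyadic time blocks $[t\pm 2^k,t\pm 2^{k+1}]$. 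On each block, Lemma~\ref{P:kernel est} gives rapid spatial decay of $P^\pm_N e^{\mp i\tau\Delta}$ outside the moving shell, which together with the spatial localization provided by almost periodicity (the bulk of $u$ sits in $|x|\lesssim N(t)^{-1}\lesssim1$) yields gain in $N$ at each block. Summing produces an estimate of the form $\mathcal{M}(N)\lesssim_u\sum_{k\geq0}2^{-\alpha k}\mathcal{N}(c_kN)$ for some $\alpha>0$, analogous to \eqref{N-->M}.

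Finally I would close the argument by a bootstrap: starting from the qualitative decay of Lemma~\ref{qualit lemma}, feed the combined estimates for $\mathcal{M}$, $\mathcal{S}$, $\mathcal{N}$ back into themselves; Lemma~\ref{Gronwall} upgrades the (already small) initial exponent to any $\sigma<1+\tfrac4d$. Iterating this exactly as in the proof of Theorem~\ref{ss-sob-thm} gives $\mathcal{M}(N)\lesssim_u N^{-s}$ for any $s<1+\tfrac4d$, hence $u\in L^\infty_tH^s_x$. The main obstacle is the bootstrap itself: the nonlinearity is only $C^{1+4/d}$, and the recursion couples $u$ at scale $N$ with $|u|^{4/d}u$ at scale $\eta N^\beta$ with $\beta<1$, so the bookkeeping is delicate. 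This is precisely the situation Lemma~\ref{Gronwall} is designed for, and combined with the fractional chain rules (Lemmas~\ref{F Lip} and~\ref{fdfp}) it should untangle the relation between frequencies of $u$ and of $F(u)$, completing the proof.
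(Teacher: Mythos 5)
Your high-level architecture (frequency-envelope bootstrap, reduced Duhamel formulae, in/out decomposition, kernel estimates, fractional chain rule, Lemma~\ref{Gronwall}) is the right one, but the central step is carried out with the in/out pairing \emph{reversed}, and as stated it fails. The outgoing projection must be applied to the Duhamel representation from the \emph{future} and the incoming projection to the one from the \emph{past}, as in \eqref{pm rep}: the kernel bounds of Lemma~\ref{P:kernel est} concern $P^{\pm}_M e^{\mp i t\Delta}$ with $t>0$, whose stationary-phase shell lies at $|y|-|x|\sim M t$, so for $|t|\geq\delta$ every significant source point sits at radius $\gtrsim M\delta$, where the radial solution is small and the weighted Strichartz estimate \eqref{wait} converts this into a negative power of $N^2\delta$. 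With your pairing ($P^+$ against the backward integral) the operator that actually appears is $P^+_M e^{+i\tau\Delta}$, $\tau>0$, whose stationary shell is $|x|+|y|\sim M\tau$: for observation radii $|x|\sim M\tau$ this shell passes through the region near the origin where $u$, hence $F(u)$, is concentrated (recall $N(t)\lesssim 1$ means the bulk of the mass lives at $|x|\lesssim 1$), so there is no smallness to exploit there and the sum over dyadic time blocks cannot be made $\leq\frac1{10}\sum_{L\leq\eta N}(L/N)^s\mathcal M(L)$. Physically, an outgoing wave escapes to large radii in the future, but traced into the past it refocuses through the origin; your pairing picks up exactly that refocusing.

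Two further gaps remain even after the pairing is fixed. First, you apply $P^{\pm}$ globally, but its kernel is singular at $x=0$ and $P^+$ is not even bounded on $L^2(\R^d)$ for $d\geq4$ (see the remark after \eqref{P+ defn d}); one must treat the region $|x|\leq N^{-1}$ separately with the plain propagator via \eqref{no pm rep'} and Lemma~\ref{lr:propag est L}, and use $P^{\pm}$ only after inserting the cutoff $\chi_N$, relying on Lemma~\ref{P:P properties}. Second, the estimate you assert, $\mathcal M(N)\lesssim\sum_k 2^{-\alpha k}\mathcal N(c_kN)$, is modeled on \eqref{N-->M}, which used $N(t)=t^{-1/2}\to0$, and is not what the in/out mechanism produces here: the large-time blocks must be split into $|y|\gtrsim M|t|$ (weighted Strichartz plus the fractional chain rule, yielding $(N^2\delta)^{-c}\sum_{L\leq\eta N}(L/N)^s\mathcal M(L)$) and $|y|\ll M|t|$ (kernel decay), while the short-time piece $|t|\leq\delta$, where the kernel bounds give nothing, needs the separate decomposition of Lemma~\ref{local-lemma}, and it is this piece that generates the $N^{-s}+\sum_{M\leq\eta N}(M/N)^s\mathcal M(M)$ structure. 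Relatedly, your recursion with intermediate scales $\eta N^{\beta}$, $\beta<1$, and bilinear Strichartz is the self-similar scheme of Section~\ref{ss-sec}; to close it one needs a quantitative seed decay like Corollary~\ref{decay M,S,N}, which there came from the dispersive estimate together with $u(\eps)\to0$ as $\eps\to0$, an input with no analogue in the global case. The recursion the paper proves involves only frequencies $\leq\eta N$ with weights $(M/N)^s$, precisely the form that Lemma~\ref{Gronwall} iterates starting from mere boundedness of $\mathcal M$, so no seed is required.
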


The argument mimics that in \cite{KTV}, though the non-polynomial nature of the nonlinearity introduces several technical complications.
That $u(t)$ is moderately smooth will follow from a careful study of the Duhamel formulae \eqref{duhamel}.
Near $t$, we use the fact that there is little mass at high frequencies, as is implied by the definition of almost periodicity and the
boundedness of the frequency scale function $N(t)$.  Far from $t$, we use the spherical symmetry of the solution.  As
this symmetry is only valuable at large radii, we are only able to exploit it by using
the in/out decomposition described in Section~\ref{S:in/out}.

Let us now begin the proof.  For the remainder of the section, $u$ will denote a solution to \eqref{nls}
that obeys the hypotheses of Theorem~\ref{glob-sob-thm}.

We first record some basic local estimates.  From mass conservation we have
\begin{equation}\label{masst}
\| u \|_{L^\infty_t L^2_x(\R \times \R^d)} \lesssim_u 1,
\end{equation}
while from Definition~\ref{apdef} and the fact that $N(t)$ is bounded we have
$$ \lim_{N \to \infty} \| u_{\geq N} \|_{L^\infty_t L^2_x(\R \times \R^d)}  = 0.$$
From Lemma~\ref{spacelemma} and $N(t)\lesssim 1$, we have
\begin{align}\label{gr 44}
\| u \|_{L^{\frac{2(d+2)}d}_{t,x}(J \times \R^d)} \lesssim_u \langle |J|\rangle^{\frac{d}{2(d+2)}}
\end{align}
for all intervals $J\subset \R$.  By H\"older's inequality, this implies
\begin{align}\label{nonlin finite}
\| F(u) \|_{L^{\frac{2(d+2)}{d+4}}_{t,x}(J \times \R^d)} \lesssim_u \langle |J|\rangle^{\frac{d+4}{2(d+2)}}
\end{align}
and then, by the (endpoint) Strichartz inequality (Lemma \ref{L:strichartz}),
\begin{equation}\label{2infty}
\| u \|_{L^2_t L^{\frac{2d}{d-2}}_x(J \times \R^d)} \lesssim_u \langle |J|\rangle^{\frac 12}.
\end{equation}
More precisely, one first treats the case $|J| = O(1)$ using \eqref{gr 44} and then larger intervals by subdivision.
Similarly, from the weighted Strichartz inequality (Lemma~\ref{L:wes}),
\begin{equation}\label{wait}
\bigl\| |x|^{\frac{d-1}2} u_{N_1\leq \cdot\leq N_2} \bigr\|_{L^4_t L^\infty_x(J \times \R^d)} \lesssim_u \langle |J|\rangle^{\frac 14}
\end{equation}
uniformly in $0<N_1\leq N_2<\infty$.

Now, for any dyadic number $N$, define
\begin{equation}\label{cndef}
\mathcal{M}(N) :=\| u_{\geq N} \|_{L^\infty_t L^2_x(\R \times \R^d)}.
\end{equation}
From the discussion above, we see that $\mathcal{M}(N) \lesssim_u 1$ and
\begin{equation}\label{clim}
\lim_{N \to \infty} \mathcal{M}(N) = 0.
\end{equation}

To prove Theorem~\ref{glob-sob-thm}, it suffices to show $\mathcal{M}(N) \lesssim_{u,s} N^{-s}$
for any $0<s<1+\frac 4d$ and all $N$ sufficiently large depending on $u$ and $s$.  As we will explain
momentarily, this will follow from Lemma~\ref{Gronwall} and the following

\begin{proposition}[Regularity]\label{ueta}
Let $u$ be as in Theorem~\ref{glob-sob-thm}, let $0<s<1+\frac 4d$, and let $\eta>0$ be a small number.  Then
$$ \mathcal{M}(N) \leq N^{-s} + \sum_{M\leq \eta N} \bigl(\tfrac MN\bigr)^s\mathcal{M}(M),$$
whenever $N$ is sufficiently large depending on $u$, $s$, and $\eta$.
\end{proposition}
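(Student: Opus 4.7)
The strategy is to estimate $\|P_N u(t)\|_{L^2_x}$ uniformly in $t\in\R$ by combining the one-sided Duhamel formula of Lemma~\ref{duhamel L} with the in/out decomposition of Section~\ref{S:in/out}. By Lemma~\ref{P:P properties}(i) (and by the symmetry between $P^+$ and $P^-$), it suffices to control $P^+ P_N u(t)$ on the region $\{|x|\gtrsim N^{-1}\}$; the complementary region $\{|x|\lesssim N^{-1}\}$ will be handled at the end via the standard two-sided Duhamel formula combined with Bernstein. On the outgoing piece, the kernel estimates of Lemma~\ref{P:kernel est} make rigorous the forward-in-time representation
$$P^+P_N u(t) = -i\int_t^{\infty} P^+ P_N e^{i(t-t')\Delta} F(u(t'))\,dt',$$
which I then split at a small fixed time $T_0>0$ into a short window $[t,t+T_0]$ and a long tail $[t+T_0,\infty)$.

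On the short window I discard the $P^+$ projector (at a cost controlled by Lemma~\ref{P:P properties}(ii)) and apply Lemma~\ref{L:strichartz}, reducing the task to controlling $\|P_N F(u)\|_{L^{2(d+2)/(d+4)}_{t,x}([t,t+T_0]\times\R^d)}$. Lemma~\ref{Bernstein} transfers $s$ powers from $P_N$ onto the nonlinearity, and Corollary~\ref{cor:s deriv} distributes them onto $u$, producing
$$\|P_N F(u)\|_{L^{2(d+2)/(d+4)}_{t,x}} \lesssim N^{-s}\bigl\||\nabla|^s u\bigr\|_{L^{2(d+2)/d}_{t,x}}\|u\|^{4/d}_{L^{2(d+2)/d}_{t,x}}.$$
The Littlewood--Paley pieces of $|\nabla|^s u$ with $M\leq \eta N$, paired with Strichartz on the short window, contribute $\sum_{M\leq\eta N} M^s\mathcal{M}(M)$, which after normalization by $N^s$ is exactly the recursive sum in the conclusion. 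For $M>\eta N$ I use the monotonicity $\mathcal{M}(M)\leq\mathcal{M}(\eta N)$ together with \eqref{clim}, choosing $N$ large depending on $u$, $s$, and $\eta$ so that this remainder is absorbed into $N^{-s}$.

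On the long tail I keep the $P^+$ projection and invoke the outgoing kernel decay of Lemma~\ref{P:kernel est}, which is $(|x||y|)^{-(d-1)/2}|t'-t|^{-1/2}$ inside the propagation cone and super-polynomial off-cone. Pairing this decay with the weighted Strichartz inequality (Lemma~\ref{L:wes}) and the a priori bounds \eqref{wait} and \eqref{nonlin finite} controls the tail by the same two pieces: a low-frequency contribution feeding the recursive sum, and a high-frequency contribution made small by \eqref{clim}. The small-$|x|$ region is closed by a direct Strichartz/Bernstein estimate on the short window, and the bilinear Strichartz inequality (Lemma~\ref{L:bilinear Shao}) is invoked for separated high-low interactions inside the nonlinearity, in the spirit of Section~\ref{ss-sec}. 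The main obstacle is the non-polynomial character of $F(u)=\mu|u|^{4/d}u$: Corollary~\ref{cor:s deriv} is only available for $s<1+4/d$ and forces one to carry the auxiliary H\"older exponent $\sigma\in(d(s-1)/4,1)$ through all the Littlewood--Paley bookkeeping, so that every frequency interaction is verified to be either of low--low type (contributing cleanly with gain $(M/N)^s$) or to contain a factor above $\eta N$ (absorbed via \eqref{clim}). This combinatorial accounting is where the real work of the argument lies.
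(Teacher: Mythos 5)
Your skeleton matches the paper's (reduced Duhamel formula from Lemma~\ref{duhamel L} combined with the in/out decomposition, a short-time/long-time splitting, weighted radial Strichartz for long times), but the short-window step as you wrote it is circular and would fail. Applying Corollary~\ref{cor:s deriv} to the full solution presupposes finiteness of $\||\nabla|^s u\|_{L^{2(d+2)/d}_{t,x}}$, which is exactly what is being proved; and your subsequent Littlewood--Paley splitting of $|\nabla|^s u$ breaks down at high frequencies: for $M>\eta N$ the weight $(M/N)^s$ is large, and $\sum_{M>\eta N}(M/N)^s\mathcal{M}(M)$ is not summable from the known bounds (the qualitative decay \eqref{clim} carries no rate), so it cannot be ``absorbed into $N^{-s}$.'' The paper avoids this by decomposing $F(u)$ \emph{before} differentiating, as in \eqref{decomp 2}: the fractional chain rule is applied only to $F(u_{\leq \eta N})$, whose Littlewood--Paley pieces all lie below $\eta N$, while every term containing a factor $u_{>\eta N}$ is estimated using the smallness \eqref{uetan}, Bernstein, \eqref{2infty} and interpolation, producing a small multiple of $\mathcal{M}(\eta N)+\mathcal{N}(\eta N)$. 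Even then, Strichartz converts the Duhamel term into a bound involving the nonlinear quantity $\mathcal{N}(M)$ as well as $\mathcal{M}(M)$, and the Gronwall-type Lemma~\ref{Gronwall} is needed inside Lemma~\ref{local-lemma} to untangle that recursion; your proposal has no mechanism playing this role.

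The treatment of the small ball and of the long-time tail also has genuine gaps. The claim that $\{|x|\lesssim N^{-1}\}$ ``is closed by a direct Strichartz/Bernstein estimate on the short window'' cannot work: the reduced Duhamel integral extends over all of $[\delta,\infty)$, and nothing makes its contribution on the small ball negligible without further argument. In the paper this region is handled by the forward-only representation \eqref{no pm rep'} (the $P^\pm$ kernels are too singular at $x=0$ to be used there), and its long-time part needs the same analysis as the exterior region: one writes $P_{\geq N}=\sum_{M\geq N}P_M\tilde P_M$, splits the $(t,y)$ integration into $|y|\gtrsim M|t|$ --- treated via the adjoint of the weighted Strichartz estimate of Lemma~\ref{L:wes}, the $A_2$-weighted boundedness of $\tilde P_M$, and \eqref{wait}, with summation over dyadic time windows $2^k\delta\leq|t|\leq 2^{k+1}\delta$ and over $M\geq N$ (Lemma~\ref{ar main}) --- and $|y|\ll M|t|$, where the kernel bounds of Lemmas~\ref{lr:propag est L} and~\ref{P:kernel est} supply the factor $(M^2|t|)^{-50d}$ and Schur's test applies (Lemma~\ref{ar tail}). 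Your tail paragraph names some of the right tools but omits this main/tail splitting and the $M$- and $k$-summations that make the bounds finite, and the same ``decompose $F$ before differentiating'' device is needed there as well, since otherwise the circularity above reappears. (Incidentally, the bilinear Strichartz estimate plays no role in this argument; in the paper it is used only in the self-similar section.)
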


Indeed given $\eps>0$, let $\eta=2^{-K}$ where $K$ is so large that $2\log(K-1)<\eps(K-1)$.
Let $N_0$ be sufficiently large depending on $u$, $s$, and $K$ so that the inequality in Proposition~\ref{ueta}
holds for $N\geq N_0$.  If we write $r=2^{-s}$, $x_k=\mathcal{M}(2^k N_0)$, and
\begin{align*}
b_k = 2^{-ks}N_0^{-s} + \sum_{l\leq -1} 2^{-s(k-l)}\mathcal{M}(2^l N_0)\lesssim_u  2^{-ks} \lesssim_u 2^{-k(s-\eps)} ,
\end{align*}
then \eqref{Gron rec} holds.  Therefore, $\mathcal{M}(N) \lesssim_{u,s} N^{\eps-s}$ by the last sentence in Lemma~\ref{Gronwall}.

The rest of this section is devoted to proving Proposition~\ref{ueta}.  Fix $0<s<1+\frac 4d$ and $\eta>0$.
Our task is to show that
$$
    \| u_{\geq N}(t_0) \|_{L^2_x(\R^d)} \leq N^{-s} + \sum_{M\leq \eta N} \bigl(\tfrac MN\bigr)^s\mathcal{M}(M)
$$
for all times $t_0$ and all $N$ sufficiently large (depending on $u$, $s$, and $\eta$).
By time translation symmetry, we may assume $t_0=0$.  As noted above, one of the
keys to obtaining additional regularity is Lemma~\ref{duhamel L}.  Specifically, we have
\begin{align}
u_{\geq N}(0) &=\bigl( P^+ + P^- \bigr) u_{\geq N}(0) \label{pm rep}\\
       &= \lim_{T\to\infty} i\int_0^T P^+ e^{-it\Delta} P_{\geq N}F(u(t))\,dt
                 -i \lim_{T\to\infty} \int_{-T}^0 P^- e^{-it\Delta} P_{\geq N}F(u(t))\,dt,\notag
\end{align}
where the limit is to be interpreted as a weak limit in $L^2$.  However, this representation is not
useful for $|x|$ small because the kernels of $P^\pm$ have a strong singularity at $x=0$.  To this end,
we introduce the cutoff $\chi_N(x):=\chi(N|x|)$, where $\chi$ is the characteristic function of $[1,\infty)$.
As short times and large times will be treated differently, we rewrite \eqref{pm rep} as
\begin{align}\label{pm rep'}
\chi_N(& x)u_{\geq N}(0,x) \notag\\
&= i\int_0^\delta \chi_N(x)P^+ e^{-it\Delta} P_{\geq N}F(u(t))\,dt
        -i\int_{-\delta}^0 \chi_N(x)P^- e^{-it\Delta} P_{\geq N}F(u(t))\,dt \notag\\
&\quad+\lim_{T\to\infty}\sum_{M\geq N} i\int_\delta^T      \int_{\R^d} \chi_N(x)[P_M^+ e^{-it\Delta}](x,y) [\tilde P_M F(u(t))](y) \,dy\,dt \\
&\quad-\lim_{T\to\infty}\sum_{M\geq N} i\int_{-T}^{-\delta}\int_{\R^d} \chi_N(x)[P_M^- e^{-it\Delta}](x,y) [\tilde P_M F(u(t))](y) \,dy\,dt,\notag
\end{align}
as weak limits in $L_x^2$.  Note that we also used the identity
$$
P_{\geq N} = \sum_{M\geq N} P_M \tilde P_M,
$$
where $\tilde P_M:=P_{M/2}+P_M+P_{2M}$, because of the way we will estimate the large-time integrals.

The analogous representation for treating small $x$ is
\begin{align}\label{no pm rep'}
\bigl(1- & \chi_N(x)\bigr)u_{\geq N}(0,x) \notag\\
&= \lim_{T\to\infty} i\int_0^T \bigl(1- \chi_N(x)\bigr) e^{-it\Delta} P_{\geq N}F(u(t))\,dt\notag\\
&= i\int_0^\delta \bigl(1-\chi_N(x)\bigr)e^{-it\Delta}  P_{\geq N}F(u(t))\,dt \\
    &\quad + \lim_{T\to\infty} \sum_{M\geq N} i\int_\delta^T\int_{\R^d} \bigl(1-\chi_N(x)\bigr)
    [P_M e^{-it\Delta}](x,y) [\tilde P_M F(u(t))](y) \,dy\,dt, \notag
\end{align}
also as weak limits.

To deal with the poor nature of the limits in \eqref{pm rep'} and \eqref{no pm rep'}, we note that
\begin{equation}\label{weakly closed}
f_T \to f \text{ weakly} \quad\Longrightarrow\quad
    \|f\| \leq \limsup_{T\to\infty} \|f_T\|,
\end{equation}
or equivalently, that the unit ball is weakly closed.

Despite the fact that different representations will be used depending on the size of $|x|$, some
estimates can be dealt with in a uniform manner.  The first such example is a bound on
integrals over short times.

\begin{lemma}[Local estimate]\label{local-lemma}
Let $0<s<1+\frac 4d$.  For any sufficiently small $\eta>0$, there exists $\delta = \delta(u,\eta) > 0$ such that
\begin{equation*}
\Bigl\| \int_0^\delta e^{-it\Delta} P_{\geq N} F(u(t))\,dt \Bigr\|_{L^2_x}
     \leq N^{-s} + \tfrac{1}{10}\sum_{M\leq \eta N} \bigl(\tfrac MN\bigr)^s\mathcal{M}(M),
\end{equation*}
provided $N$ is sufficiently large depending on $u$,  $s$, and $\eta$.  An analogous estimate holds for integration over
$[-\delta,0]$ and after pre-multiplication by $\chi_N P^\pm$.
\end{lemma}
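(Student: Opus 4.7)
The first step is to invoke Strichartz (Lemma~\ref{L:strichartz}): since $e^{-it\Delta}$ is unitary on $L_x^2$, I reduce the desired bound to controlling
$$
\|P_{\geq N} F(u)\|_{L^{2(d+2)/(d+4)}_{t,x}([0,\delta]\times\R^d)}.
$$
The key preliminary is that $\kappa(\delta) := \|u\|_{L^{2(d+2)/d}_{t,x}([0,\delta])} \to 0$ as $\delta \to 0^+$. Applying Strichartz to the Duhamel formula for $u$ itself gives $\kappa(\delta) \lesssim \|e^{it\Delta}u(0)\|_{L^{2(d+2)/d}_{t,x}([0,\delta])} + \kappa(\delta)^{1+4/d}$; the linear term vanishes as $\delta \to 0$ by dominated convergence (since $e^{it\Delta}u(0) \in L^{2(d+2)/d}_{t,x}(\R\times\R^d)$ by Strichartz), and a standard continuity bootstrap from $\kappa(0)=0$ completes the argument.

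Next, I would decompose $u = u_{\leq \eta N} + u_{>\eta N}$ and expand
$$
F(u) = F(u_{\leq \eta N}) + O\bigl(|u_{\leq \eta N}|^{4/d} u_{>\eta N}\bigr) + O\bigl(|u_{>\eta N}|^{1+4/d}\bigr).
$$
The two ``cross'' terms are bounded directly: discarding $P_{\geq N}$, using H\"older and $\|u_{>\eta N}\|_{L^{2(d+2)/d}_{t,x}} \lesssim \kappa(\delta)$, their contribution is at most $C\kappa(\delta)^{1+4/d}$, an $N$-free quantity which is $\leq N^{-s}$ once $N$ is large enough. For the main piece $F(u_{\leq \eta N})$, Bernstein on $P_{\geq N}$ costs a factor of $N^{-s}$ and Corollary~\ref{cor:s deriv} yields
$$
\|P_{\geq N} F(u_{\leq \eta N})\|_{L^{2(d+2)/(d+4)}_{t,x}}
\lesssim N^{-s}\,\kappa(\delta)^{4/d}\sum_{M \leq \eta N} M^s\,\|u_M\|_{L^{2(d+2)/d}_{t,x}([0,\delta])}.
$$
For each dyadic $M$, Strichartz applied to the Duhamel representation of $u_M$ gives $\|u_M\|_{L^{2(d+2)/d}_{t,x}([0,\delta])} \lesssim \mathcal{M}(M/2) + \kappa(\delta)^{1+4/d}$, so the low-frequency contribution is at most
$$
C\kappa(\delta)^{4/d}\sum_{M\leq \eta N}\bigl(\tfrac MN\bigr)^s \mathcal{M}(M) + C\kappa(\delta)^{1+8/d}\eta^s.
$$

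Choosing $\delta=\delta(u,\eta)$ small enough that $C\kappa(\delta)^{4/d}\leq \tfrac{1}{10}$ (after absorbing the $2^s$ reindexing constant) produces the required $\tfrac{1}{10}$ coefficient on the sum, and taking $N$ large (depending on $u$, $s$, $\eta$) absorbs the $N$-independent small constants into $N^{-s}$. The integral over $[-\delta,0]$ is handled identically by time-reversal symmetry. For the version pre-multiplied by $\chi_N P^\pm$, Lemma~\ref{P:P properties}(ii) gives the $N$-uniform bound $\|\chi_N P^\pm P_{\geq N} g\|_{L^2_x}\lesssim \|g\|_{L^2_x}$ (the cutoff $\chi_N$ localizing inside the admissible region $|x|\geq \tfrac{1}{100}N^{-1}$), which applied to $g=\int_0^\delta e^{-it\Delta} F(u)\,dt$ reduces the pre-multiplied case to the one already proved.

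\textbf{Main obstacle.} The most delicate step is the per-frequency Strichartz estimate $\|u_M\|_{L^{2(d+2)/d}_{t,x}([0,\delta])} \lesssim \mathcal{M}(M/2) + \kappa(\delta)^{1+4/d}$: this is what links the $L^\infty_tL^2_x$ quantity $\mathcal{M}$ (the only control we have on $u_M$) to the $L^{2(d+2)/d}_{t,x}$ norm required by the fractional chain rule, without losing an unacceptable $M^{d/(d+2)}$ factor that a naive Bernstein at frequency $M$ would produce and that would destroy the summability in $M$ against $(M/N)^s$.
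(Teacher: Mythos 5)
There is a genuine gap, and it is quantitative rather than cosmetic: several of your error terms do not decay in $N$, and such terms cannot be absorbed into the right-hand side. Observe that $N^{-s}+\tfrac1{10}\sum_{M\leq\eta N}(\tfrac MN)^s\mathcal{M}(M)\to0$ as $N\to\infty$ (split the sum at $\sqrt N$ and use $\mathcal{M}(M)\lesssim_u1$ together with \eqref{clim}), so every bound entering the proof must vanish as $N\to\infty$ for fixed $\delta$ and $\eta$. Your cross terms are bounded by $C\kappa(\delta)^{1+4/d}$, and the Duhamel contribution inside your low-frequency estimate produces $C\eta^s\kappa(\delta)^{1+8/d}$; both are fixed positive constants once $\delta$ is chosen, and $\delta$ must be chosen before, and independently of, $N$. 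The assertion that such ``$N$-free'' quantities are $\leq N^{-s}$ once $N$ is large is backwards: $N^{-s}\to0$ while these constants do not. The source of the problem is the crude bound $\|P_MF(u)\|_{L^{2(d+2)/(d+4)}_{t,x}}\lesssim\kappa(\delta)^{1+4/d}$ (and likewise $\|u_{>\eta N}\|_{L^{2(d+2)/d}_{t,x}}\lesssim\kappa(\delta)$), which discards the frequency localization of the nonlinearity; the decay in $M$, equivalently in $N$, is exactly what is lost there.

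The paper's proof is organized so that no $N$-independent error is ever created. First, almost periodicity supplies an intermediate frequency $N_0=N_0(u,\eta)$ with $\|u_{\geq N_0}\|_{L^\infty_tL^2_x}\leq\eta^{100d^2}$ (see \eqref{uetan}), and the cross term is split further into $|u_{\leq N_0}|^{4/d}|u_{>\eta N}|$ and $|u_{N_0\leq\cdot\leq\eta N}|^{4/d}|u_{>\eta N}|$: the first gains the factor $\delta^{1/2}N_0$ (this is precisely where $\delta=\delta(u,\eta)$ is chosen), the second gains $\eta^8$ by interpolating against \eqref{2infty}, and both retain the factor $\mathcal{M}(\eta N)+\mathcal{N}(\eta N)$, which vanishes as $N\to\infty$. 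Second, in the main term the Strichartz norm of each $u_M$ is bounded by $\mathcal{M}(M)+\mathcal{N}(M)$ with $\mathcal{N}(M):=\|P_{\geq M}F(u)\|_{L^{2(d+2)/(d+4)}_{t,x}(J\times\R^d)}$ kept frequency-localized; this makes the estimate self-referential, of the form $\mathcal{N}(N)\leq\sum_{M\leq\eta N}(\tfrac MN)^{s+2\eps}[\mathcal{M}(M)+\mathcal{N}(M)]$, and the recursion is closed with the Gronwall-type Lemma~\ref{Gronwall} --- exactly the step your shortcut was designed to avoid, and the step that cannot be avoided. A secondary issue: your smallness parameter $\kappa(\delta)$ is tied to the particular time $t_0=0$, whereas the lemma is invoked at arbitrary $t_0$ after time translation, so $\delta$ must be uniform in $t_0$; a dominated-convergence or bootstrap argument at $t_0=0$ does not give this without an additional compactness-plus-stability argument. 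The paper's ingredients \eqref{gr 44}, \eqref{2infty}, and \eqref{uetan} are uniform over time translates, which is why its smallness is extracted from powers of $\eta$ and from $\delta^{1/2}N_0$ rather than from a local Strichartz norm of $u$.
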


\begin{proof}
By Lemma~\ref{L:strichartz}, it suffices to prove
\begin{align}\label{short times}
\mathcal{N}(N):=\| P_{\geq N} F(u) \|_{L^{\frac{2(d+2)}{d+4}}(J \times \R^d)}
\lesssim_u N^{-s-\eps} + \sum_{M\leq \eta N} \bigl(\tfrac MN\bigr)^{s+\eps}\mathcal{M}(M)
\end{align}
for some small $\eps>0$, any interval $J$ of length $|J| \leq \delta$, and all sufficiently large $N$ depending on
$u$, $s$, and~$\eta$, since the claim would follow by requiring $\eta$ small and $N$ large, both depending on $u$.

From \eqref{clim}, there exists $N_0 = N_0(u,\eta)$ such that
\begin{equation}\label{uetan}
 \| u_{\geq N_0} \|_{L^\infty_t L^2_x(\R \times \R^d)} \leq \eta^{100d^2}.
\end{equation}
Let $N>N_1:=\eta^{-1} N_0$.  We decompose
\begin{equation}\label{decomp 2}
\begin{aligned}
F(u) &= F(u_{\leq \eta N}) + O\bigl(|u_{\leq N_0}|^{\frac 4d} |u_{> \eta N}|\bigr)
        + O\bigl(|u_{N_0 \leq \cdot \leq \eta N}|^{\frac 4d} |u_{>\eta N}|\bigr)\\
&\quad  + O\bigl(|u_{>\eta N}|^{1+\frac 4d}\bigr).
\end{aligned}
\end{equation}

Using Lemma~\ref{Bernstein}, Corollary~\ref{cor:s deriv} together with \eqref{gr 44}, and Lemma~\ref{L:strichartz},
we estimate the contribution of the first term on
the right-hand side of \eqref{decomp 2} as follows:
\begin{align*}
\| P_{\geq N} F(u_{\leq \eta N}) \|_{L^{\frac{2(d+2)}{d+4}}(J \times \R^d)}
&\lesssim N^{-s-3\eps} \bigl\| |\nabla|^{s+3\eps} F(u_{\leq \eta N}) \bigr\|_{L^{\frac{2(d+2)}{d+4}}(J \times \R^d)}\\
&\lesssim_u \langle \delta\rangle^{\frac{2}{d+2}} N^{-s-3\eps}\bigl\||\nabla|^{s+3\eps} u_{\leq \eta N}\bigr\|_{L_{t,x}^{\frac{2(d+2)}d}(J \times \R^d)}\\
&\lesssim_u \langle \delta\rangle^{\frac{2}{d+2}} \sum_{M\leq \eta N} \bigl(\tfrac MN\bigr)^{s+3\eps}[\mathcal{M}(M) + \mathcal{N}(M)]\\
&\lesssim_u \eta^\eps \langle \delta\rangle^{\frac{2}{d+2}} \sum_{M\leq \eta N} \bigl(\tfrac MN\bigr)^{s+2\eps}[\mathcal{M}(M) + \mathcal{N}(M)],
\end{align*}
for any $0<\eps<\frac 13 (1+ \frac 4d - s)$.

To estimate the contribution of the second term on the right-hand side of \eqref{decomp 2}, we use
H\"older's inequality, Lemma~\ref{Bernstein}, and \eqref{gr 44}:
\begin{align*}
\bigl\| O\bigl(|u_{\leq N_0}|^{\frac 4d} & |u_{> \eta N}|\bigr) \bigr\|_{L^{\frac{2(d+2)}{d+4}}(J \times \R^d)}\\
&\lesssim \delta^{\frac 12} \|u_{\leq N_0}\|_{L_{t,x}^{\frac{2(d+2)}d}(J \times \R^d)}^{\frac2d}
        \|u_{\leq N_0}\|_{L_{t,x}^\infty(J \times \R^d)}^{\frac2d} \|u_{> \eta N}\|_{L_t^\infty L_x^2(J \times \R^d)}\\
&\lesssim_u \delta^{\frac 12}\langle \delta\rangle^{\frac1{d+2}} N_0 \mathcal{M}(\eta N).
\end{align*}

Finally, to estimate the contribution of the last two terms on the right-hand side of \eqref{decomp 2},
we use H\"older's inequality, interpolation combined with \eqref{2infty} and \eqref{uetan}, and then
Lemma~\ref{L:strichartz} to obtain
\begin{align*}
\bigl\| O\bigl(|& u_{N_0 \leq \cdot \leq\eta N}|^{\frac 4d}  |u_{> \eta N}|\bigr) \bigr\|_{L^{\frac{2(d+2)}{d+4}}(J \times \R^d)}\\
&\lesssim \|u_{N_0 \leq \cdot \leq \eta N}\|_{L_{t,x}^{\frac{2(d+2)}d}(J \times \R^d)}^{\frac4d}
        \|u_{>\eta N}\|_{L_{t,x}^{\frac{2(d+2)}d}(J \times \R^d)}\\
&\lesssim \|u_{N_0 \leq \cdot \leq \eta N}\|_{L_t^\infty L_x^2(J \times \R^d)}^{\frac8{d(d+2)}}
        \|u_{N_0 \leq \cdot \leq \eta N}\|_{L_t^2 L_x^{\frac{2d}{d-2}}(J \times \R^d)}^{\frac4{d+2}}
        [\mathcal{M}(\eta N) + \mathcal{N}(\eta N)]\\
&\lesssim_u \eta^8 \langle \delta\rangle^{\frac{2}{d+2}}[\mathcal{M}(\eta N) + \mathcal{N}(\eta N)]
\end{align*}
and similarly,
\begin{align*}
\bigl\| O\bigl(|u_{> \eta N}|^{1+\frac 4d}\bigr) \bigr\|_{L^{\frac{2(d+2)}{d+4}}(J \times \R^d)}
&\lesssim_u \eta^8 \langle \delta\rangle^{\frac{2}{d+2}}[\mathcal{M}(\eta N) + \mathcal{N}(\eta N)].
\end{align*}

Putting everything together and taking $\eta$ sufficiently small depending on $u$ and $s$,
then $\delta$ sufficiently small depending upon $N_0$ and $\eta$, we derive
\begin{align}\label{almost}
\mathcal{N}(N)
&\leq \sum_{M\leq \eta N} \bigl(\tfrac MN\bigr)^{s+2\eps}[\mathcal{M}(M) + \mathcal{N}(M)]
\end{align}
for all $N>N_1$ and some (very small) $\eps>0$.  The claim \eqref{short times} follows from this and Lemma~\ref{Gronwall}.
More precisely, let $\eta=2^{-K}$ where $K$ is sufficiently large so that $2\log(K-1)<\eps (K-1)$.
If we write $r=2^{-s-2\eps}$, $x_k=\mathcal{N}(2^k N_1)$, and
\begin{align*}
b_k
&= \sum_{l\leq k-K} 2^{-(s+2\eps)(k-l)}\mathcal{M}(2^l N_1) + \sum_{l\leq -1} 2^{-(s+2\eps)(k-l)}\mathcal{N}(2^l N_1)\\
&\lesssim_u \sum_{l\leq k-K} 2^{-(s+2\eps)(k-l)}\mathcal{M}(2^l N_1) + 2^{-(s+2\eps)k},
\end{align*}
then \eqref{almost} implies \eqref{Gron rec}.  With a few elementary manipulations, \eqref{Gron bound}
implies \eqref{short times}.

The last claim follows from Lemma~\ref{P:P properties} after employing $P_{\geq N}=P_{\geq N/2} P_{\geq N}$.
\end{proof}

To estimate the integrals where $|t|\geq\delta$, we break the region
of $(t,y)$ integration into two pieces, namely, where $|y|\gtrsim
M|t|$ and $|y|\ll M|t|$.  The former is the more significant region;
it contains the points where the integral kernels
$P_Me^{-it\Delta}(x,y)$ and $P_M^\pm e^{-it\Delta}(x,y)$ are large
(see Lemmas~\ref{lr:propag est L} and~\ref{P:kernel est}). More
precisely, when $|x|\leq N^{-1}$, we use \eqref{no pm rep'}; in this
case $|y-x|\sim M|t|$ implies $|y|\gtrsim M|t|$ for $|t|\geq \delta
\geq N^{-2}$.  (This last condition can be subsumed under our
hypothesis $N$ sufficiently large depending on $u$ and $\eta$.) When
$|x|\geq N^{-1}$, we use \eqref{pm rep'}; in this case $|y|-|x|\sim
M|t|$ implies $|y|\gtrsim M|t|$.

The next lemma bounds the integrals over the significant region $|y|\gtrsim M|t|$.  Let $\chi_k$ denote
the characteristic function of the set
$$\{(t,y):\, 2^k\delta\leq |t|\leq 2^{k+1}\delta, \ |y|\gtrsim M|t|\}.$$

\begin{lemma}[Main contribution]\label{ar main}
Let $0<s<1+\frac 4d$, let $\eta>0$ be a small number, and let
$\delta$ be as in Lemma~\ref{local-lemma}.  Then
\begin{align*}
 \sum_{M\geq N}\sum_{k=0}^\infty \Bigl\|
\int_{\R}\!\int_{\R^d}
    [P_M e^{-it\Delta}](x,y)\,\chi_k(t,y) \, & [\tilde P_M F(u(t))](y) \,dy\,dt\Bigr\|_{L^2_x}\\
&\leq \tfrac{1}{10}\sum_{L\leq \eta N} \bigl(\tfrac LN\bigr)^s\mathcal{M}(L)
\end{align*}
for all $N$ sufficiently large depending on $u$, $s$, and $\eta$.  An analogous estimate holds with $P_M$
replaced by $\chi_N P_M^+$ or $\chi_N P_M^-$; moreover, the time integrals may be taken over $[-2^{k+1}\delta,-2^k\delta]$.
\end{lemma}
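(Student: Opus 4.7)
The plan is to apply the weighted Strichartz inequality (Lemma~\ref{L:wes}) in dual form, use the lower bound $|y|\gtrsim M|t|\gtrsim M 2^k\delta$ available on $\supp\chi_k$ to extract spatial decay, and then decompose $F(u)$ into low- and high-frequency pieces to produce both the gain $M^{-s}$ and the targeted sum $\sum_{L\leq\eta N}(L/N)^s\mathcal{M}(L)$. For each dyadic $M\geq N$ and $k\geq 0$, rewrite the summand as $\|v_{M,k}\|_{L^2_x}$, where $v_{M,k}:=P_M\int_\R e^{-it\Delta}\bigl[\chi_k(t,\cdot)\tilde P_M F(u(t))\bigr]\,dt$, and pick any $q$ in the range $[\max\{d,4\},\,2d-1)$, which is non-empty for $d\geq 3$. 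Since $P_M$ is bounded on $L^2_x$, the remaining factors are radial, and duality applied to Lemma~\ref{L:wes} gives
\[
\|v_{M,k}\|_{L^2_x}\lesssim \bigl\||y|^{-\frac{2(d-1)}q}\chi_k\tilde P_M F(u)\bigr\|_{L^{q'}_t L^{\frac{2q}{q+4}}_y}
\lesssim (M2^k\delta)^{-\frac{2(d-1)}q}(2^k\delta)^{\frac{1}{q'}}\bigl\|\tilde P_M F(u)\bigr\|_{L^\infty_t L^{\frac{2q}{q+4}}_x},
\]
using $|y|\geq M2^k\delta$ on $\supp\chi_k$ and the time localization $|t|\sim 2^k\delta$ in the second step. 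The net exponent of $2^k$ is $(q-2d+1)/q<0$ by the choice of $q$, so the sum over $k\geq 0$ is a convergent geometric series.

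To estimate the nonlinearity, decompose $F(u)=F(u_{\leq\eta N})+O\bigl(|u|^{4/d}|u_{>\eta N}|\bigr)$, mirroring the proof of Lemma~\ref{local-lemma}. For the main piece, Bernstein extracts a gain $M^{-s}$, and the second inequality of Corollary~\ref{cor:s deriv} (applied with $r=q$) yields
\[
\bigl\|\tilde P_M F(u_{\leq\eta N})\bigr\|_{L^\infty_t L^{\frac{2q}{q+4}}_x}
\lesssim M^{-s}\bigl\||\nabla|^s u_{\leq\eta N}\bigr\|_{L^\infty_t L^2_x}\|u\|_{L^\infty_t L^{2q/d}_x}^{4/d}.
\]
A dyadic Bernstein decomposition of $u_{\leq\eta N}$ converts the first $L^2$-based factor into $\sum_{L\leq\eta N}L^s\mathcal{M}(L)$; the remaining factor is $\lesssim_u 1$ by mass conservation together with Sobolev embedding and whatever regularity has been established earlier in the iteration. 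The error term $O(|u|^{4/d}|u_{>\eta N}|)$ is subordinate thanks to the smallness $\|u_{>\eta N}\|_{L^\infty_t L^2_x}\leq \eta^{100d^2}$ furnished by~\eqref{clim} as soon as $\eta N\geq N_0(u,\eta)$.

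Summing over dyadic $M\geq N$ produces a leading factor of $N^{-s-2(d-1)/q}$, which is strictly smaller than $N^{-s}$, so taking $\eta$ small and $N$ large brings the constant below $1/10$ and yields the desired inequality. For the variants with $\chi_N P_M^\pm$ in place of $P_M$, Lemma~\ref{P:P properties}(ii) gives the required $L^2_x$-boundedness of $P^\pm$ on $\supp\chi_N=\{|x|\gtrsim N^{-1}\}$, and the same argument goes through unchanged; the time-reversed statement follows by the time-reversal symmetry of the Schr\"odinger equation. The main technical obstacle is threading the choice of $q$ so that $\|u\|_{L^\infty_t L^{2q/d}_x}^{4/d}$ is controlled consistently within the bootstrap --- smaller $q$ pushes $2q/d$ closer to $2$ (where mass conservation suffices) but weakens the decay in $k$, and a balance must be struck; the $d=3$ case, where $q$ is confined to $[4,5)$, is particularly delicate.
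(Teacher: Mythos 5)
Your outer skeleton for the main term is the paper's: dualize the weighted Strichartz estimate (Lemma~\ref{L:wes}), use $|y|\gtrsim M|t|\sim M2^k\delta$ on $\supp\chi_k$ to win $(M2^k\delta)^{-2(d-1)/q}$, gain $M^{-s}$ by Bernstein, and apply the second inequality of Corollary~\ref{cor:s deriv}; the $\chi_N P^\pm_M$ variant via Lemma~\ref{P:P properties}(ii) and time reversal is also as in the paper. But there is a genuine gap at the heart of the estimate: the factor $\|u_{\leq \eta N}\|_{L^\infty_t L^{2q/d}_x}^{4/d}$ is \emph{not} $O_u(1)$ at this stage. Since $q\geq\max\{d,4\}$, one has $2q/d>2$ whenever $q>d$, and for $d=3$ this is unavoidable ($q\geq4>3$), so mass conservation does not control it; and the appeal to ``Sobolev embedding and whatever regularity has been established earlier in the iteration'' is circular, because Proposition~\ref{ueta} is proved in one stroke from mass conservation, \eqref{wait}, \eqref{2infty} and the quantities $\mathcal{M}(\cdot)$, and only afterwards does Lemma~\ref{Gronwall} produce the decay of $\mathcal{M}$; the bootstrap is over frequencies, not over previously established Sobolev regularity. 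The correct fix is the paper's: bound this factor by Bernstein, paying $(\eta M)^{2(q-d)/q}$, and then check that the spatial-decay gain still dominates, i.e.\ that the net prefactor is $(M^2 2^k\delta)^{-(2d-1-q)/q}$ with $2d-1-q>0$; this is precisely where the constraint $q<2d-1$ earns its keep, and the paper's choice $q=d+1$ gives the summable bound $(N^2\delta)^{-\frac{d-2}{d+1}}\sum_{L\leq\eta N}(L/N)^s\mathcal{M}(L)$ (using \eqref{ha} to pass from sums up to $\eta M$ to sums up to $\eta N$). Your final sentence acknowledges this tension but does not resolve it, and for $d\geq4$ only the special choice $q=d$ would let mass conservation suffice.

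The second gap is the error term. You assert that $O(|u|^{4/d}|u_{>\eta N}|)$ is ``subordinate'' because of \eqref{clim}, but you never estimate it in the dual weighted Strichartz norm you are using, and doing so runs into the same problem: one would need $\|u\|_{L^\infty_t L^{2q/d}_x}$ (or $L^p_x$ norms of $u_{>\eta N}$ with $p>2$), which is unavailable. The paper treats the high-frequency pieces $O(|u_{\leq\eta M}|^{4/d}|u_{>\eta M}|)$ and $O(|u_{>\eta M}|^{1+4/d})$ by a different mechanism: the plain $L^1_tL^2_y$ dual endpoint, transferring the weight $|y|^{2(d-1)/d}$ through $\tilde P_M$ (Mihlin multiplier on the $A_2$-weighted space) onto the $u$ factors, and then the radial weighted bound \eqref{wait} to control $|u|^{4/d}$ in weighted $L^4_tL^\infty_y$, which produces the small factor $\mathcal{M}(\eta N)$ times a decaying power of $M^22^k\delta$. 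Without either that route or a worked-out substitute (e.g.\ Bernstein on the low factor plus interpolation with \eqref{2infty} and H\"older in time), your proof of the lemma is incomplete.
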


\begin{proof}
We decompose
\begin{align}\label{decomp 3}
F(u)= F(u_{\leq \eta M}) + O\bigl(|u_{>\eta M}|^{1+\frac 4d} \bigr) + O\bigl( |u_{\leq \eta M}|^{\frac 4d} |u_{>\eta M}| \bigr) .
\end{align}

We first consider the contribution coming from the last two terms in the decomposition above.
By the adjoint Strichartz inequality and H\"older's inequality,
\begin{align*}
\Bigl\| \int_{\R}\!  & \int_{\R^d}[P_M e^{-it\Delta}](x,y) \chi_k(t,y)
    \tilde P_M \bigl[ O\bigl(|u_{>\eta M}|^{\frac{d+4}d} \bigr)
    + O\bigl( |u_{\leq \eta M}|^{\frac 4d} |u_{>\eta M}| \bigr)\bigr](y) \,dy\,dt\Bigr\|_{L^2_x}\\
&\lesssim \bigl\|\chi_k \tilde P_M \bigl[ O\bigl(|u_{>\eta M}|^{\frac{d+4}d} \bigr)
    + O\bigl( |u_{\leq \eta M}|^{\frac 4d} |u_{>\eta M}| \bigr)\bigr]\bigr\|_{L_t^1 L_y^2}\\
&\lesssim (M 2^k \delta )^{-\frac{2(d-1)}d} (2^k\delta)^{\frac{d-1}d} \Bigl[ \bigl\||y|^{\frac{2(d-1)}d}\tilde P_M
    O\bigl(|u_{>\eta M}|^{\frac{d+4}d} \bigr)\bigr\|_{L_t^d L_y^2([2^k\delta, 2^{k+1}\delta]\times\R^d)}\\
&\qquad \qquad \qquad \qquad \quad + \bigl\||y|^{\frac{2(d-1)}d}\tilde P_M
    O\bigl( |u_{\leq \eta M}|^{\frac 4d} |u_{>\eta M}| \bigr)\bigr\|_{L_t^d L_y^2([2^k\delta, 2^{k+1}\delta]\times\R^d)}\Bigr].
\end{align*}
As $\tilde P_M$ is a Mihlin multiplier and $|y|^{\frac{4(d-1)}d}$ is an $A_2$ weight, $\tilde P_M$ is bounded on
$L^2(|y|^{\frac{4(d-1)}d}\,dy)$; see \cite[Ch. V]{stein:large}.  Thus, by H\"older's inequality and \eqref{wait},
\begin{align*}
\Bigl\| \int_{\R}\! & \int_{\R^d}  [P_M e^{-it\Delta}](x,y)\chi_k(t,y)
    \tilde P_M \bigl[ O\bigl(|u_{>\eta M}|^{\frac{d+4}d} \bigr)
    + O\bigl( |u_{\leq \eta M}|^{\frac 4d} |u_{>\eta M}| \bigr)\bigr](y) \,dy\,dt\Bigr\|_{L^2_x}\\
&\lesssim (M 2^k \delta )^{-\frac{2(d-1)}d} (2^k\delta)^{\frac{d-1}d} \Bigl[\bigl\||y|^{\frac{2(d-1)}d}
    |u_{>\eta M}|^{\frac{d+4}d}\bigr\|_{L_t^d L_y^2([2^k\delta, 2^{k+1}\delta]\times\R^d)}\\
&\qquad \qquad \qquad \qquad \qquad \qquad + \bigl\||y|^{\frac{2(d-1)}d}
    |u_{\leq \eta M}|^{\frac 4d} |u_{>\eta M}| \bigr\|_{L_t^d L_y^2([2^k\delta, 2^{k+1}\delta]\times\R^d)}\Bigr]\\
&\lesssim (M 2^k \delta )^{-\frac{2(d-1)}d} (2^k\delta)^{\frac{d-1}d} \|u_{>\eta M}\|_{L_t^\infty L_y^2}
    \Bigl[\bigl\||y|^{\frac{d-1}2}u_{>\eta M}\bigr\|_{L_t^4 L_y^\infty([2^k\delta, 2^{k+1}\delta]\times\R^d)}^{\frac 4d}\\
&\qquad \qquad \qquad \qquad \qquad \qquad \qquad +
     \||y|^{\frac{d-1}2}u_{\leq \eta M}\bigr\|_{L_t^4 L_y^\infty([2^k\delta, 2^{k+1}\delta]\times\R^d)}^{\frac 4d}\Bigr]\\
&\lesssim_u (M 2^k \delta )^{-\frac{2(d-1)}d} (2^k\delta)^{\frac{d-1}d} \mathcal{M}(\eta N) \langle 2^k\delta \rangle^{\frac 1d}.
\end{align*}
Summing first in $k\geq 0$ and then in $M\geq N$, we estimate the contribution of the last two terms
on the right-hand side of \eqref{decomp 3} by
$$
\bigl( N^2\delta \bigr)^{-1+\frac 1d}\mathcal{M}(\eta N).
$$

Next we consider the contribution coming from the first term on the right-hand side of \eqref{decomp 3}.
By the adjoint of the weighted Strichartz inequality in Lemma~\ref{L:wes}, H\"older's inequality,
Corollary~\ref{cor:s deriv}, and Lemma~\ref{Bernstein},
\begin{align*}
\Bigl\| \int_{\R}\!\int_{\R^d}
     & [P_M e^{-it\Delta}](x,y)\, \chi_k(t,y) \, [\tilde P_M F(u_{\leq \eta M}(t))](y) \,dy\,dt\Bigr\|_{L^2_x}\\
&\lesssim (M 2^k \delta )^{-\frac{2(d-1)}q}\bigl\|\chi_k \tilde P_M F(u_{\leq \eta M})\bigr\|_{L_t^{\frac{q}{q-1}}L_y^{\frac{2q}{q+4}}}\\
&\lesssim (M 2^k \delta )^{-\frac{2(d-1)}q} (2^k\delta)^{\frac{q-1}q} M^{-s}
    \bigl\||\nabla|^{s} F(u_{\leq \eta M})\bigr\|_{L_t^\infty L_y^{\frac{2q}{q+4}}}\\
&\lesssim (M 2^k \delta )^{-\frac{2(d-1)}q} (2^k\delta)^{\frac{q-1}q} M^{-s}
        \|u_{\leq \eta M}\|_{L_t^\infty L_y^{\frac{2q}d}}^{\frac4d}
        \bigl\||\nabla|^s u_{\leq \eta M}\bigr\|_{L_t^\infty L_y^2}\\
&\lesssim_u (M 2^k \delta )^{-\frac{2(d-1)}q} (2^k\delta)^{\frac{q-1}q}
       (\eta M)^{\frac{2(q-d)}q}\sum_{L\leq \eta M} \bigl( \tfrac LM \bigr)^s \mathcal{M}(L)\\
&\lesssim_u (M^2 2^k \delta )^{-\frac{2d-q-1}q}
       \sum_{L\leq \eta N} \bigl( \tfrac LN \bigr)^s \mathcal{M}(L)
\end{align*}
provided $q\geq \max\{d,4\}$ and $M\geq N$.  In order to deduce the last inequality, we used the fact that for $M\geq N$,
\begin{equation}\label{ha}
\begin{aligned}
\sum_{L\leq \eta M} \bigl(\tfrac LM\bigr)^s\mathcal{M}(L)
&\leq \sum_{L\leq \eta N} \bigl(\tfrac LN\bigr)^s\mathcal{M}(L) + \sum_{\eta N\leq L\leq \eta M} \bigl(\tfrac LM\bigr)^s\mathcal{M}(L)\\
&\lesssim \sum_{L\leq \eta N} \bigl(\tfrac LN\bigr)^s\mathcal{M}(L) + \eta^s \mathcal{M}(\eta N)\\
&\lesssim \sum_{L\leq \eta N} \bigl(\tfrac LN\bigr)^s\mathcal{M}(L).
\end{aligned}
\end{equation}
Therefore, choosing $q=d+1$,
\begin{align*}
\sum_{M\geq N}\sum_{k=0}^\infty \Bigl\|   \int_\R\! \int_{\R^d}
    [P_M e^{-it\Delta}](x,y)\, \chi_k(t,y) & \, [\tilde P_M F(u_{\leq \eta M}(t))](y) \,dy\,dt\Bigr\|_{L^2_x}\\
&\lesssim (N^2 \delta )^{-\frac{d-2}{d+1}} \sum_{L\leq \eta N} \bigl( \tfrac LN \bigr)^s \mathcal{M}(L).
\end{align*}

Putting everything together we obtain
\begin{align*}
\sum_{M\geq N}\sum_{k=0}^\infty \Bigl\|  & \int_\R\! \int_{\R^d}
    [P_M e^{-it\Delta}](x,y)\, \chi_k(t,y) \, [\tilde P_M F(u(t))](y) \,dy\,dt\Bigr\|_{L^2_x}\\
&\lesssim_u \eta^{-s}\Bigl[\bigl( N^2\delta \bigr)^{-1+\frac 2d} + \bigl( N^2\delta \bigr)^{-1+\frac 1d}
    + \bigl( N^2 \delta \bigr)^{-\frac{d-2}{d+1}}\Bigr]\sum_{L\leq \eta N} \bigl(\tfrac LN\bigr)^s\mathcal{M}(L).
\end{align*}
Choosing $N$ sufficiently large depending on $u$, $\delta$, and $s$ (and hence only on $u$, $\eta$, and $s$), we obtain the desired bound.

The last claim follows from the $L_x^2$-boundedness of $\chi_N P^\pm P_{M}$ (cf. Lemma~\ref{P:P properties})
and the time-reversal symmetry of the argument just presented.
\end{proof}

We turn now to the region of $(t,y)$ integration where $|y|\ll M|t|$.  First, we describe the bounds that we will use
for the kernels of the propagators.  For $|x|\leq N^{-1}$, $|y|\ll M|t|$, and $|t|\geq \delta \gg N^{-2}$,
\begin{equation}\label{PM propig}
|P_M e^{-it\Delta}(x,y)| \lesssim \frac{1}{(M^2|t|)^{50d}} \frac{M^d}{\langle M(x-y)\rangle^{50d}};
\end{equation}
this follows from Lemma~\ref{lr:propag est L} since under these constraints, $|y-x|\ll M|t|$.  For $|x|\geq N^{-1}$
and $y$ and $t$ as above,
\begin{equation}\label{PMpm propig}
|P_M^\pm e^{-it\Delta}(x,y)| \lesssim \frac{1}{(M^2|t|)^{50d}}
        \frac{M^d}{\langle Mx\rangle^{\frac{d-1}2}\langle My\rangle^{\frac{d-1}2}\langle M|x|-M|y|\rangle^{50d}};
\end{equation}
by Lemma~\ref{P:kernel est}.  Note that we have used $|y|-|x|\ll M|t|$ and
$$
\langle M^2|t|+M|x|-M|y| \rangle^{-100d} \lesssim (M^2|t|)^{-50d}  \langle M|x|-M|y| \rangle^{-50d}
$$
in order to simplify the bound.

From \eqref{PM propig} and \eqref{PMpm propig} we see that under the hypotheses set out above,
\begin{equation}\label{combined propig}
|P_M e^{-it\Delta}(x,y)| + |P_M^\pm e^{-it\Delta}(x,y)| \lesssim \frac{1}{(M^2|t|)^{50d}} K_M(x,y),
\end{equation}
where
$$
K_M(x,y) := \frac{M^d}{\langle M(x-y)\rangle^{50d}}
        + \frac{M^d}{\langle Mx\rangle^{\frac{d-1}2}\langle My\rangle^{\frac{d-1}2}\langle M|x|-M|y|\rangle^{50d}}.
$$
Note that by Schur's test, this is the kernel of a bounded operator on $L^2_x(\R^d)$.

Let $\tilde\chi_k$ denote the characteristic function of the set
$$\{(t,y):\, 2^k\delta\leq |t|\leq 2^{k+1}\delta, \ |y|\ll M|t|\}.$$

\begin{lemma}[The tail]\label{ar tail}
Let $0<s<1+\frac 4d$, let $\eta>0$ be a small number, and let $\delta$ be as in Lemma~\ref{local-lemma}.  Then
\begin{equation*}
\sum_{M\geq N}\sum_{k=0}^\infty \Bigl\| \int_{\R}\int_{\R^d}
    \frac{K_M(x,y)}{(M^2|t|)^{50d}}\,\tilde\chi_k(t,y) \, |\tilde P_M F(u(t))|(y) \,dy\,dt\Bigr\|_{L^2_x}
\leq \tfrac{1}{10}\!\!\!\sum_{L\leq \eta N}\!\! \bigl(\tfrac LN\bigr)^s\mathcal{M}(L)
\end{equation*}
for all $N$ sufficiently large depending on $u$, $s$, and $\eta$ (in particular, we require $N\gg \delta^{-1/2}$).
\end{lemma}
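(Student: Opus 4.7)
My strategy exploits the fact that the decay factor $(M^2|t|)^{-50d}$ is vastly more powerful than any bound one could ever need on $\|\tilde P_M F(u(t))\|_{L^2_y}$. Given the hypothesis $N \gg \delta^{-1/2}$, which guarantees $M^2|t| \geq N^2 \delta \gg 1$ throughout the region of integration, this factor alone is enough to absorb any crude polynomial estimate on the $L^2$-norm of the nonlinearity. The first step is to use the observation (noted immediately before the statement) that $K_M$ is the kernel of an operator bounded on $L^2_x(\R^d)$ uniformly in $M$ via Schur's test. Minkowski's inequality applied to the $t$-integral then reduces the quantity to be estimated to
\[
\sum_{M\geq N}\sum_{k\geq 0} (M^2 \cdot 2^k \delta)^{-50d} \int_{2^k\delta}^{2^{k+1}\delta} \|\tilde P_M F(u(t))\|_{L^2_y}\,dt.
\]

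For the integrand I use a crude polynomial bound. Bernstein's inequality provides
$\|\tilde P_M G\|_{L^2_y} \lesssim M^{d/(d+2)} \|G\|_{L^{2(d+2)/(d+4)}_y}$, and combining this with H\"older in $t$ and \eqref{nonlin finite} yields
\[
\int_{2^k\delta}^{2^{k+1}\delta} \|\tilde P_M F(u(t))\|_{L^2_y}\,dt \lesssim_u M^{d/(d+2)} \langle 2^k\delta\rangle^{C_d}
\]
for some dimensional constant $C_d$. The polynomial growth in $M$ and $2^k\delta$ is hopelessly outmatched by the $(M^2 \cdot 2^k\delta)^{-50d}$ weight, so the resulting double sum over $k\geq 0$ and dyadic $M \geq N$ is a convergent geometric series dominated by its first term, yielding a total bound of the form $(N^2\delta)^{-C'}$ for some large constant $C'$.

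Finally, since $\delta = \delta(u,\eta)$ is fixed, the hypothesis $N \gg \delta^{-1/2}$ makes $N^2\delta$ arbitrarily large and the bound arbitrarily small. To see that this is smaller than $\tfrac{1}{10}\sum_{L\leq \eta N}(L/N)^s \mathcal{M}(L)$, I observe that (assuming $u \not\equiv 0$) $\mathcal{M}(L) \to \|u\|_{L^2_x} > 0$ as $L \to 0$, so the dyadic geometric sum of $(L/N)^s$ over $L\leq \eta N$ gives a lower bound of size $\gtrsim_u \eta^s$, independent of $N$. Thus for $N$ sufficiently large depending on $u$, $s$, and $\eta$, our estimate $(N^2\delta)^{-C'}$ falls below $\tfrac{1}{10} \eta^s$ and the claim follows. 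There is no genuine analytic obstacle here: the enormous stationary-phase decay built into $K_M / (M^2|t|)^{50d}$ makes this a purely bookkeeping exercise, and the only thing that requires a moment's thought is verifying that the target right-hand side is bounded below by a positive quantity that the LHS must eventually undercut.
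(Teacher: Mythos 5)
Your reduction of the left-hand side to an absolute quantity of size $\lesssim_u (N^2\delta)^{-C'}$ is fine: Minkowski plus the $L^2$-boundedness of the operator with kernel $K_M$, Bernstein from $L^{2(d+2)/(d+4)}_y$ to $L^2_y$, H\"older in time and \eqref{nonlin finite} do give a bound with a harmless polynomial loss in $M$ and $2^k\delta$, which the factor $(M^2 2^k\delta)^{-50d}$ crushes after summation. The genuine problem is the final comparison step. Your claimed lower bound $\sum_{L\leq \eta N}(L/N)^s\mathcal{M}(L)\gtrsim_u \eta^s$ uniformly in $N$ is false: the weights $(L/N)^s$ are dominated by the dyadic $L$ comparable to $\eta N$, but for those frequencies $\mathcal{M}(L)=\mathcal{M}(\eta N)\to 0$ as $N\to\infty$ by \eqref{clim}, while the frequencies where $\mathcal{M}(L)\sim \|u\|_{L^\infty_t L^2_x}$ (i.e. $L$ bounded) carry weight only $O(N^{-s})$. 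Splitting the sum at any fixed $L_0$ shows in fact that $\sum_{L\leq \eta N}(L/N)^s\mathcal{M}(L)\to 0$ as $N\to\infty$, so no $N$-independent positive lower bound exists. The argument is salvageable, but only with the correct lower bound: for nonzero $u$ there is a fixed dyadic $L_1$ with $\mathcal{M}(L_1)>0$, whence the right-hand side is $\gtrsim_{u,s}N^{-s}$ for $N\geq L_1/\eta$; since $s<1+\tfrac4d$ while your absolute bound decays like a huge negative power of $N^2\delta$, the desired inequality still follows for $N$ large depending on $u$, $s$, $\eta$. As written, though, the step you describe as the only one "requiring a moment's thought" is exactly the one that fails.

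For comparison, the paper never needs any lower bound on the target sum (nor the hypothesis $u\neq 0$): after the same kernel/H\"older/Bernstein reduction it decomposes $F(u)=F(u_{\leq\eta M})+O(|u_{\leq\eta M}|^{4/d}|u_{>\eta M}|)+O(|u_{>\eta M}|^{1+4/d})$, estimates the rough pieces by $\mathcal{M}(\eta N)$ and the smooth piece via Corollary~\ref{cor:s deriv} by $M^{-s}\||\nabla|^s u_{\leq\eta M}\|_{L^\infty_t L^2_x}\lesssim\sum_{L\leq\eta N}(L/N)^s\mathcal{M}(L)$, arriving at a bound of the form $(N^2\delta)^{-49d}\eta^{-s}\sum_{L\leq\eta N}(L/N)^s\mathcal{M}(L)$, i.e. a small multiple of the target quantity itself. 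That relative structure matches Lemmas~\ref{local-lemma} and~\ref{ar main} and feeds directly into the Gronwall bootstrap; your absolute-bound shortcut can be made to work for this particular lemma because of the enormous exponent $50d$, but only after replacing the erroneous $\eta^s$ lower bound by the correct $N^{-s}$ one.
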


\begin{proof}
Using H\"older's inequality, the $L^2$-boundedness of the operator with kernel $K_M$, and Lemma~\ref{Bernstein},
\begin{align*}
\Bigl\| \int_{\R}\int_{\R^d} & \frac{K_M(x,y)}{(M^2|t|)^{50d}}\,\tilde\chi_k(t,y) \,
    |\tilde P_M F(u(t))|(y) \,dy\,dt\Bigr\|_{L^2_x}\\
&\lesssim (M^2 2^k\delta )^{-50d} (2^k\delta)^{\frac{d-2}d} M^{\frac{2(d-2)}d}
    \|\tilde P_M F(u)\|_{L_t^{\frac d2} L^{\frac{2d^2}{d^2 +4d -8}}_x([2^k\delta, 2^{k+1}\delta]\times\R^d)}\\
&\lesssim (M^2 2^k\delta )^{-49d}  \|\tilde P_M F(u)\|_{L_t^{\frac d2} L^{\frac{2d^2}{d^2 +4d -8}}_x([2^k\delta, 2^{k+1}\delta]\times\R^d)}.
\end{align*}

We decompose
\begin{align}\label{decomp 4}
F(u)=F(u_{\leq \eta M}) + O\bigl(|u_{\leq \eta M}|^{\frac 4d} |u_{> \eta M}|\bigr) + O\bigl(|u_{> \eta M}|^{1+\frac 4d}\bigr)
\end{align}
Discarding the projection $\tilde P_M$, we use H\"older and \eqref{2infty} to estimate
\begin{align*}
\bigl\|\tilde P_M O\bigl(|u_{\leq \eta M}|^{\frac 4d}
    |u_{> \eta M}|\bigr)&\bigr\|_{L_t^{\frac d2} L^{\frac{2d^2}{d^2 +4d -8}}_x([2^k\delta, 2^{k+1}\delta]\times\R^d)}\\
&\lesssim \|u_{\leq \eta M}\|_{L_t^2 L^{\frac{2d}{d-2}}_x([2^k\delta, 2^{k+1}\delta]\times\R^d)}^{\frac 4d} \|u_{> \eta M}\|_{L_t^\infty L^2_x}\\
&\lesssim_u \langle 2^k\delta \rangle^{\frac 2d}\mathcal{M}(\eta N)\\
\bigl\|\tilde P_M  O\bigl(|u_{>\eta M}|^{1+\frac 4d}\bigr)&\bigr\|_{L_t^{\frac d2} L^{\frac{2d^2}{d^2+4d-8}}_x([2^k\delta, 2^{k+1}\delta]\times\R^d)}\\
&\lesssim \|u_{> \eta M}\|_{L_t^2 L^{\frac{2d}{d-2}}_x([2^k\delta, 2^{k+1}\delta]\times\R^d)}^{\frac 4d} \|u_{> \eta M}\|_{L_t^\infty L^2_x}\\
&\lesssim_u \langle 2^k\delta \rangle^{\frac 2d} \mathcal{M}(\eta N).
\end{align*}
To estimate the contribution coming from the first term on the right-hand side of \eqref{decomp 4},
we use Lemma~\ref{Bernstein}, Corollary~\ref{cor:s deriv} (with $r=\tfrac{d^2}{d-2}$) combined with
H\"older's inequality in the time variable, \eqref{2infty}, and \eqref{ha}, to estimate
\begin{align*}
\|\tilde P_M F(u_{\leq \eta M})&\|_{L_t^{\frac d2} L^{\frac{2d^2}{d^2 +4d -8}}_x([2^k\delta, 2^{k+1}\delta]\times\R^d)}\\
&\lesssim M^{-s} \bigl\||\nabla|^s F(u_{\leq \eta M})\bigr\|_{L_t^{\frac d2} L^{\frac{2d^2}{d^2 +4d -8}}_x([2^k\delta, 2^{k+1}\delta]\times\R^d)}\\
&\lesssim M^{-s} \bigl\||\nabla|^s u_{\leq \eta M}\bigr\|_{L_t^\infty L^2_x}
    \|u_{\leq \eta M}\|_{L_t^2 L^{\frac{2d}{d-2}}_x([2^k\delta, 2^{k+1}\delta]\times\R^d)}^{\frac 4d}\\
&\lesssim_u \langle 2^k\delta \rangle^{\frac 2d}\sum_{L\leq \eta M} \bigl(\tfrac LM\bigr)^s\mathcal{M}(L)\\
&\lesssim_u \langle 2^k\delta \rangle^{\frac 2d}\sum_{L\leq \eta N} \bigl(\tfrac LN\bigr)^s\mathcal{M}(L)
\end{align*}
for any $M\geq N$.

Putting everything together, we deduce
\begin{align*}
\Bigl\| \int_{\R}\int_{\R^d}\frac{K_M(x,y)}{(M^2|t|)^{50d}}\,\tilde\chi_k(t,y) \,
     & [\tilde P_M  F(u(t))](y)  \,dy\,dt\Bigr\|_{L^2_x}\\
&\lesssim_u (M^2 2^k\delta )^{-49d} \langle 2^k\delta \rangle^{\frac 2d} \eta^{-s}\sum_{L\leq \eta N} \bigl(\tfrac LN\bigr)^s\mathcal{M}(L).
\end{align*}
Summing over $k\geq 0$ and $M\geq N$, we obtain
\begin{align*}
\sum_{M\geq N}\sum_{k=0}^\infty \Bigl\| \int_{\R}\int_{\R^d}
    \frac{K_M(x,y)}{(M^2|t|)^{50d}}\,\tilde\chi_k(t,y) \, & [\tilde P_M F(u(t))](y) \,dy\,dt\Bigr\|_{L^2_x}\\
&\lesssim_u (N^2\delta )^{-49d} \eta^{-s}\sum_{L\leq \eta N} \bigl(\tfrac LN\bigr)^s\mathcal{M}(L).
\end{align*}
The claim follows by choosing $N$ sufficiently large depending on $\delta$, $\eta$, and $s$ (and hence only on $u$, $s$, and $\eta$).
\end{proof}

We have now gathered enough information to complete the

\begin{proof}[Proof of Proposition~\ref{ueta}]
Naturally, we may bound $\|u_{\geq N}\|_{L^2}$ by separately bounding the $L^2$ norm on the ball $\{|x|\leq N^{-1}\}$
and on its complement.   On the ball, we use \eqref{no pm rep'}, while outside the ball we use \eqref{pm rep'}.
Invoking \eqref{weakly closed} and the triangle inequality, we reduce the proof to bounding certain integrals.
The integrals over short times were estimated in Lemma~\ref{local-lemma}.  For $|t|\geq \delta$,
we further partition the region of integration into two pieces.  The first piece, where $|y|\gtrsim M|t|$,
was dealt with in Lemma~\ref{ar main}.  To estimate the remaining piece, $|y|\ll M|t|$, one combines
\eqref{combined propig} and Lemma~\ref{ar tail}.
\end{proof}

\section{The double high-to-low frequency cascade}\label{hilo cascade-sec}

In this section, we use the additional regularity provided by Theorem~\ref{glob-sob-thm} to preclude double high-to-low
frequency cascade solutions.  We argue as in \cite{KTV}.

\begin{proposition}[Absence of double cascades]\label{eliminate-ii}  Let $d\geq 3$.  There are no non-zero global
spherically symmetric solutions to \eqref{nls} that are double high-to-low frequency cascades in the sense of
Theorem~\ref{comp}.
\end{proposition}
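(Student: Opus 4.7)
The plan is to combine the extra regularity afforded by Theorem~\ref{glob-sob-thm} with the hypothesis $\liminf_{t\to\infty}N(t)=0$ to force $\|\nabla u(t_n)\|_{L^2_x}\to 0$ along a sequence $t_n\to\infty$, and then use conservation of energy together with the (sharp) Gagliardo--Nirenberg inequality to conclude $u\equiv 0$.

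\medskip

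\noindent\textbf{Step 1: extra regularity.} Because $u$ is a double cascade, $\sup_{t}N(t)<\infty$, so Theorem~\ref{glob-sob-thm} gives $u\in L^\infty_t H^s_x$ for every $0\le s<1+\tfrac4d$. Fix some $s=1+\eps_0$ with $\eps_0>0$ small; this is the only place where the high-dimensional/nonlinear Sobolev result from the previous section is used.

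\medskip

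\noindent\textbf{Step 2: vanishing of $\|\nabla u(t_n)\|_2$ along a cascade sequence.} Pick a sequence $t_n\to\infty$ with $N(t_n)\to 0$; such a sequence exists by \eqref{cascade}. For any $\eta>0$, split into frequencies above and below $R_n:=C(\eta)N(t_n)$. By Bernstein,
\begin{equation*}
\|\nabla P_{\le R_n} u(t_n)\|_{L^2_x}\lesssim R_n\,\|u(t_n)\|_{L^2_x}\lesssim_u R_n\longrightarrow 0.
\end{equation*}
For the high frequencies, interpolation between $L^2_x$ and $\dot H^{1+\eps_0}_x$ gives
\begin{equation*}
\|\nabla P_{>R_n}u(t_n)\|_{L^2_x}\lesssim \|P_{>R_n}u(t_n)\|_{L^2_x}^{\eps_0/(1+\eps_0)}\,\|u(t_n)\|_{\dot H^{1+\eps_0}_x}^{1/(1+\eps_0)}\lesssim_u \eta^{\eps_0/(2(1+\eps_0))},
\end{equation*}
where we used Definition~\ref{apdef} for the first factor and Step~1 for the second. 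Sending $n\to\infty$ and then $\eta\to 0$ yields $\|\nabla u(t_n)\|_{L^2_x}\to 0$.

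\medskip

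\noindent\textbf{Step 3: energy zero.} Since $u(t_n)\in H^1_x$ with $H^1_x$ norm going to a finite limit (in fact its homogeneous part vanishes), the energy \eqref{energy} is well-defined at $t_n$. Gagliardo--Nirenberg bounds the potential term by $\|u(t_n)\|_{L^2_x}^{4/d}\|\nabla u(t_n)\|_{L^2_x}^2$, so both terms in $E(u(t_n))$ tend to $0$. Now Step~1 ensures $u\in C^0_tH^1_x$ (strong continuity follows from weak continuity plus the uniform $H^{1+\eps_0}$ bound), so the energy is conserved in the classical sense, and therefore $E(u(t))\equiv 0$.

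\medskip

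\noindent\textbf{Step 4: conclusion.} In the defocusing case, $E\equiv 0$ forces both the kinetic and potential terms to vanish, hence $u\equiv 0$. In the focusing case, $E\equiv 0$ reads
\begin{equation*}
\tfrac12\|\nabla u(t)\|_{L^2_x}^2=\tfrac{d}{2(d+2)}\|u(t)\|_{L^{2(d+2)/d}_x}^{2(d+2)/d},
\end{equation*}
and the sharp Gagliardo--Nirenberg inequality (Theorem~\ref{sGN}) combined with $M(u)<M(Q)$ gives
\begin{equation*}
\tfrac12\|\nabla u(t)\|_{L^2_x}^2\le \bigl(M(u)/M(Q)\bigr)^{2/d}\cdot\tfrac12\|\nabla u(t)\|_{L^2_x}^2,
\end{equation*}
which forces $\|\nabla u(t)\|_{L^2_x}\equiv 0$ and hence $u\equiv 0$. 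Either way we contradict the nontriviality of $u$.

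\medskip

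The only genuinely delicate point is Step~2: one must use \emph{strictly more} than one derivative of regularity in order to pay for the loss in the high-frequency interpolation; the rest is bookkeeping with conservation laws and the Weinstein inequality, exactly as in the two-dimensional argument of \cite{KTV}.
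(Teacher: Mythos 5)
Your proposal is correct and follows essentially the same route as the paper: regularity from Theorem~\ref{glob-sob-thm}, almost periodicity plus $N(t_n)\to0$ to make $\|\nabla u(t_n)\|_{L^2_x}$ vanish, and energy conservation with the sharp Gagliardo--Nirenberg inequality (using $M(u)<M(Q)$ in the focusing case) to reach a contradiction. The only difference is cosmetic ordering: the paper first fixes $\|\nabla u(t)\|_{L^2_x}^2\sim_u E(u)\sim_u 1$ and contradicts the vanishing, whereas you first deduce $E\equiv0$ and then force $u\equiv0$ via the same coercivity.
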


\begin{proof}
Suppose to the contrary that there is such a solution $u$.  By Theorem~\ref{glob-sob-thm}, $u$ lies in
$C_t^0H^1_x(\R\times\R^d)$. Hence the energy
\begin{equation*}
E(u) = E(u(t)) = \int_{\R^d} \tfrac{1}{2} |\nabla u(t,x)|^2 + \mu\tfrac{d}{2(d+2)}|u(t,x)|^{2(d+2)/d}\, dx
\end{equation*}
is finite and conserved (see e.g. \cite{caz}).  As we have $M(u) < M(Q)$ in the focusing case, the sharp Gagliardo-Nirenberg
inequality (reproduced here as Theorem~\ref{sGN}) gives
\begin{equation}\label{ume}
\| \nabla u(t) \|_{L^2_x(\R^d)}^2 \sim_u E(u) \sim_u 1
\end{equation}
for all $t \in \R$.  We will now reach a contradiction by proving that $\|\nabla u(t)\|_2\to 0$ along any sequence where
$N(t)\to 0$.  The existence of two such time sequences is guaranteed by the fact that $u$ is a double high-to-low frequency
cascade.

Let $\eta > 0$ be arbitrary.  By Definition~\ref{apdef}, we can find $C = C(\eta,u) > 0$ such that
$$ \int_{|\xi| \geq C N(t)} |\hat u(t,\xi)|^2\, d\xi \leq \eta^2$$
for all $t$.  Meanwhile, by Theorem~\ref{glob-sob-thm}, $u\in C_t^0H_x^s(\R\times\R^d)$ for some $s>1$.  Thus,
$$ \int_{|\xi| \geq C N(t)} |\xi|^{2s} |\hat u(t,\xi)|^2\, d\xi \lesssim_{u} 1$$
for all $t$ and some $s>1$.  Thus, by H\"older's inequality,
$$ \int_{|\xi| \geq C N(t)} |\xi|^2 |\hat u(t,\xi)|^2\, d\xi \lesssim_{u} \eta^{2(s-1)/s}.$$
On the other hand, from mass conservation and Plancherel's theorem we have
$$ \int_{|\xi| \leq C N(t)} |\xi|^2 |\hat u(t,\xi)|^2\, d\xi \lesssim_{u} C^2 N(t)^2.$$
Summing these last two bounds and using Plancherel's theorem again, we obtain
$$ \| \nabla u(t) \|_{L^2_x(\R^d)} \lesssim_{u} \eta^{(s-1)/s} + C N(t)$$
for all $t$.  As $\eta>0$ is arbitrary and there exists a sequence of times $t_n\to\infty$ such that $N(t_n)\to 0$ ($u$ is a
double high-to-low frequency cascade), we conclude $\| \nabla u(t_n) \|_2\to 0$.  This contradicts \eqref{ume}.
\end{proof}

\begin{remark}
As mentioned in \cite{KTV}, the argument presented can be used to rule out non-radial single-sided cascade solutions
that lie in $C_t^0 H^s_x$ for some $s>1$.  (By a single-sided cascade we mean a solution with $N(t)$ bounded on a
semi-infinite interval, say $[T,\infty)$, with $\liminf_{t\to\infty}N(t)=0$.)  For such regular solutions $u$, we may
define the total momentum $\int_{\R^d} \Im( \overline{u} \nabla u )$, which is conserved. By a Galilean transformation, we can
set this momentum equal to zero; thus $\int_{\R^d} \xi |\hat u(t,\xi)|^2\ d\xi = 0$.  From this, mass conservation, and the
uniform $H^s_x$ bound for some $s>1$, one can show that $\xi(t) \to 0$ whenever $N(t) \to 0$.  On the other hand, a modification
of the above argument gives
$$ 1 \sim_u \|\nabla u(t)\|_2 \lesssim \eta^{(s-1)/s} + C\bigl(N(t) + |\xi(t)|\bigr),$$
which is absurd.
\end{remark}

\section{Death of a soliton}\label{soliton-sec}
In this section, we use the additional regularity proved in Theorem~\ref{glob-sob-thm} to rule out the third and final enemy,
the soliton-like solution.  Once again, we follow \cite{KTV}; the method is similar to that in \cite{merlekenig}.
Let
\begin{align}\label{general Morawetz}
M_R(t) := 2 \Im \int_{\R^d} \psi(|x|/R) \bar u(t,x) x \cdot \nabla u(x,t) \,dx
\end{align}
where $\psi$ is a smooth function obeying
$$
\psi(r) = \begin{cases}
1 &: r\leq 1 \\
0 &: r\geq 2
\end{cases}
$$
and $R$ denotes a radius to be chosen momentarily.  For solutions $u$ to \eqref{nls} belonging to $C^0_t H^1_x$, $M_R(t)$
is a well-defined function.  Indeed,
\begin{align}\label{Morawetz bound}
|M_R(t)| \lesssim R \|u(t)\|_2 \|\nabla u(t)\|_2 \lesssim_u R.
\end{align}
An oft-repeated calculation (essentially that in the derivation of the Morawetz and viriel identities) gives the following

\begin{lemma} \label{computation}
\begin{align}
\partial_t M_a(t)&= 8 E(u(t)) \notag \\
    &\quad -\int_{\R^d} \Bigl[\tfrac{d^2-1}{R|x|}\psi'\bigl(\tfrac{|x|}R\bigr)+\tfrac{2d+1}{R^2}\psi''\bigl(\tfrac{|x|}R\bigr)
  + \tfrac{|x|}{R^3}\psi'''\bigl(\tfrac{|x|}R\bigr)\Bigr] |u(t,x)|^2 \,dx \label{M2} \\
    &\quad + 4 \int_{\R^d} \Bigl[ \psi\bigl(\tfrac{|x|}R\bigr) - 1  + \tfrac{|x|}{R}\psi'\bigl(\tfrac{|x|}R\bigr)
        \Bigr] |\nabla u(t,x)|^2 \,dx \label{M3} \\
    &\quad +\tfrac{4\mu}{d+2} \int_{\R^d} \Bigl[ d\Bigl(\psi\bigl(\tfrac{|x|}R\bigr) - 1\Bigr)  + \tfrac{|x|}{R}\psi'\bigl(\tfrac{|x|}R\bigr)
        \Bigr] |u(t,x)|^{\frac{2(d+2)}d} \,dx, \label{M4}
\end{align}
where $E(u)$ is the energy of $u$ as defined in \eqref{energy}.
\end{lemma}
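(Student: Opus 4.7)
The plan is to view $M_R$ as a weighted virial quantity and carry out a direct integration-by-parts computation. Writing $M_R(t) = \int b_j(x)\,\Im(\bar u\,\partial_j u)\,dx$ with $b_j(x) := 2\psi(|x|/R)\,x_j$ (summation convention), I would first derive, using $iu_t = -\Delta u + F(u)$, the pointwise momentum conservation law
$$
\partial_t\Im(\bar u\,\partial_j u) = \Re\,\partial_k\bigl[\bar u\,\partial_j\partial_k u - \partial_k\bar u\,\partial_j u\bigr]
- \tfrac{2\mu}{d+2}\,\partial_j|u|^{\frac{2(d+2)}{d}},
$$
where the nonlinear piece comes from the scalar identity $|u|^{4/d}\partial_j|u|^2 = \tfrac{d}{d+2}\partial_j|u|^{2(d+2)/d}$. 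Multiplying by $b_j$, summing, and integrating by parts (twice on the kinetic piece, once on the potential piece) yields
$$
\partial_t M_R(t) = -\tfrac12\int\Delta^2 a\,|u|^2\,dx
+ 2\int a_{jk}\,\Re(\partial_j\bar u\,\partial_k u)\,dx
+ \tfrac{2\mu}{d+2}\int\Delta a\,|u|^{\frac{2(d+2)}{d}}\,dx,
$$
where the radial weight $a(x) := \int_0^{|x|} 2s\,\psi(s/R)\,ds$ satisfies $\partial_j a = b_j$, so that $a_{jk} = 2\psi(|x|/R)\,\delta_{jk} + 2\psi'(|x|/R)\,\tfrac{x_jx_k}{|x|R}$.

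Next I would compute the remaining derivatives of $a$. Since $a'(r) = 2r\psi(r/R)$, a direct calculation gives $\Delta a = 2d\psi(r/R) + \tfrac{2r}{R}\psi'(r/R)$ and
$$
\Delta^2 a = 2\Bigl[\tfrac{d^2-1}{rR}\psi'(r/R) + \tfrac{2d+1}{R^2}\psi''(r/R) + \tfrac{r}{R^3}\psi'''(r/R)\Bigr],
$$
which reproduces the bracketed factor in \eqref{M2} exactly after the $-\tfrac12$. For the $a_{jk}$ integral I would invoke the spherical symmetry of $u$: since $\nabla u = (x/|x|)\,u_r$, the identity $x_jx_k\Re(\partial_j\bar u\,\partial_k u) = |x|^2|\nabla u|^2$ collapses the anisotropic part of $a_{jk}$, and one obtains $2\int a_{jk}\Re(\partial_j\bar u\,\partial_k u) = 4\int\bigl[\psi(r/R) + \tfrac{r}{R}\psi'(r/R)\bigr]|\nabla u|^2$. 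Writing $\psi(r/R) = 1 + [\psi(r/R) - 1]$ in the kinetic piece and $d\psi(r/R) = d + d[\psi(r/R)-1]$ in the potential piece then extracts the combination $4\int|\nabla u|^2 + \tfrac{4\mu d}{d+2}\int|u|^{2(d+2)/d} = 8E(u(t))$, with the remaining error terms being precisely \eqref{M3} and \eqref{M4}.

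The only technical point is that $u(t)$ is only known to lie in $H^1_x$, so the second- and third-order integrations by parts are not literally justified at the level of a strong $L^2_x$ solution. I would handle this by approximating the initial datum in $H^1_x$ by Schwartz data, invoking $H^1_x$ well-posedness to produce smooth approximating solutions on any compact interval, carrying out the computation in that class, and passing to the limit in the $C^0_tH^1_x$ topology; continuity of each side of the claimed identity in that topology is routine, and the bound $|M_R(t)|\lesssim_u R$ in \eqref{Morawetz bound} guarantees no contribution at spatial infinity. I do not anticipate any obstacle beyond the algebraic bookkeeping already described.
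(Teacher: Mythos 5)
Your computation is correct and is exactly the ``oft-repeated'' virial/Morawetz calculation that the paper invokes without writing out: the momentum conservation law, the formula with $-\tfrac12\Delta^2 a$, $2a_{jk}\Re(\partial_j\bar u\,\partial_k u)$, and $\tfrac{2\mu}{d+2}\Delta a\,|u|^{2(d+2)/d}$, and the derivatives of the radial weight all check out, reproducing \eqref{M2}--\eqref{M4} and the $8E(u)$ term (note the use of spherical symmetry to write $x_jx_k\Re(\partial_j\bar u\,\partial_k u)=|x|^2|\nabla u|^2$ is genuinely needed for \eqref{M3} as stated, and is legitimate since the lemma is only applied to radial solutions). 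The closing approximation argument justifying the integrations by parts for $C^0_tH^1_x$ solutions is standard and fine.
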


\begin{proposition}[Absence of solitons]\label{eliminate-i}  Let $d\geq 3$.  There are no non-zero global spherically
symmetric solutions to \eqref{nls} that are soliton-like in the sense of Theorem~\ref{comp}.
\end{proposition}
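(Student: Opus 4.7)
The plan is the standard truncated virial argument. Taking $R$ large (to be chosen in terms of $u$), the bound \eqref{Morawetz bound} gives $|M_R(t)|\lesssim_u R$ for all $t\in\R$. I would then show that $\partial_t M_R(t)\geq c_u > 0$ uniformly in $t$, so that integrating over a time interval of length $T$ yields $c_u T \lesssim_u R$, a contradiction as $T\to\infty$.

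To produce the lower bound on $\partial_t M_R$, I would first upgrade almost periodicity from $L^2_x$ to $H^1_x$. By Theorem~\ref{glob-sob-thm}, $u\in L^\infty_t H^s_x$ for some $s>1$, and the orbit $K:=\{u(t):t\in\R\}$ is precompact in $L^2_x$ because $N(t)\equiv 1$; interpolating the uniform $H^s_x$ bound against $L^2_x$ precompactness renders $K$ precompact in $H^1_x$. This supplies a compactness modulus function $C(\eta)$ such that
\begin{align*}
\int_{|x|\geq C(\eta)} \bigl(|\nabla u(t,x)|^2 + |u(t,x)|^{\frac{2(d+2)}{d}}\bigr)\,dx\leq \eta
\end{align*}
uniformly in $t$ (the second term via Sobolev embedding applied on $\{|x|\geq C(\eta)\}$). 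I would also verify $E(u)>0$: since $\|f\|_{L^2_x}=M(u)^{1/2}>0$ for every $f\in K$, the continuous functional $f\mapsto\|\nabla f\|_{L^2_x}$ attains a strictly positive minimum on the $H^1$-compact set $K$ (a zero minimum would force a constant, hence zero, element of $L^2(\R^d)$), so $\|\nabla u(t)\|_{L^2_x}\geq c_u>0$ for all $t$. Combined with Theorem~\ref{sGN} and (in the focusing case) the hypothesis $M(u)<M(Q)$, this yields $E(u)\sim_u 1$.

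With these two inputs Lemma~\ref{computation} expresses $\partial_t M_R(t)$ as $8E(u)$ plus the three error terms \eqref{M2}--\eqref{M4}. In \eqref{M2} the coefficients are supported in $\{R\leq|x|\leq 2R\}$ and bounded by $O(R^{-2})$, so mass conservation controls this integral by $O_u(R^{-2})$. The coefficients appearing in \eqref{M3} and \eqref{M4} vanish on $\{|x|\leq R\}$ and are uniformly bounded, so the tail estimate above bounds each of these integrals by $O(\eta)$ provided $R\geq C(\eta)$. Choosing $\eta$ small and then $R$ large (both depending on $u$) yields $\partial_t M_R(t)\geq 4E(u)\geq c_u$ uniformly in $t$, which completes the argument.

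The main technical obstacle is the first step: without the extra regularity furnished by Theorem~\ref{glob-sob-thm}, one cannot control the spatial tails of $|\nabla u|^2$ and $|u|^{2(d+2)/d}$ uniformly in $t$, and the error terms in the virial identity would be comparable to the main term $8E(u)$. Once the regularity upgrade and the resulting $H^1$-precompactness of the orbit are in hand, the remainder of the proof reduces to bookkeeping in Lemma~\ref{computation} together with the Gagliardo--Nirenberg positivity of the energy, essentially as in~\cite{merlekenig, KTV}.
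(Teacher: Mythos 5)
Your proposal is correct and takes essentially the same route as the paper: the truncated virial identity of Lemma~\ref{computation}, strict positivity of the conserved energy via Theorem~\ref{sGN} and $M(u)<M(Q)$, the trivial $O_u(R^{-2})$ bound on \eqref{M2}, and smallness of \eqref{M3}--\eqref{M4} from the $s>1$ regularity of Theorem~\ref{glob-sob-thm} combined with the spatial concentration forced by $N(t)\equiv 1$, leading to $\partial_t M_R\gtrsim_u E(u)>0$ against $|M_R(t)|\lesssim_u R$. The only cosmetic difference is that you package the tail estimates as $H^1$-precompactness of the orbit (interpolating the $L^2$-compactness against the uniform $H^s$ bound), whereas the paper runs the same interpolation directly on $\|\chi\nabla u(t)\|_{L^2}$ with a spatial cutoff and handles \eqref{M4} by Gagliardo--Nirenberg on $\chi u$; the two are equivalent.
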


\begin{proof}
Assume to the contrary that there is such a solution $u$.  Then, by Theorem~\ref{glob-sob-thm}, $u\in C_t^0H^s_x$ for some $s>1$.
In particular,
\begin{align}\label{M bound}
|M_R(t)|\lesssim_u R.
\end{align}

Recall that in the focusing case, $M(u)<M(Q)$.  As a consequence, the sharp Gagliardo--Nirenberg inequality (reproduced here as
Theorem~\ref{sGN}) implies that the energy is a positive quantity in the focusing case as well as in the defocusing case.
Indeed,
\begin{align*}
E(u)\gtrsim_u \int_{\R^d} |\nabla u(t,x)|^2 \,dx >0.
\end{align*}

We will show that \eqref{M2} through \eqref{M4} constitute only a small fraction of $E(u)$. Combining this fact with
Lemma~\ref{computation}, we conclude $\partial_t M_R(t)\gtrsim E(u)>0$, which contradicts \eqref{M bound}.

We first turn our attention to \eqref{M2}.  This is trivially bounded by
\begin{align}
|\eqref{M2}|\lesssim_u R^{-2}.\label{M2'}
\end{align}

We now study \eqref{M3} and \eqref{M4}.  Let $\eta>0$ be a small number to be chosen later. By Definition~\ref{apdef} and the
fact that $N(t)=1$ for all $t\in \R$, if $R$ is sufficiently large depending on $u$ and $\eta$, then
\begin{align}\label{sol small mass}
\int_{|x|\geq \frac R4} |u(t,x)|^2\, dx\leq \eta
\end{align}
for all $t\in \R$.  Let $\chi$ denote a smooth cutoff to the region $|x|\geq \tfrac R2$, chosen so that $\nabla \chi$ is bounded
by $R^{-1}$ and supported where $|x|\sim R$.  As $u\in C_t^0H^s_x$ for some $s>1$, using interpolation and \eqref{sol small
mass}, we estimate
\begin{align}
|\eqref{M3}| &\lesssim \|\chi\nabla u(t)\|_2^2 \lesssim \|\nabla (\chi u(t))\|_2^2 + \|u(t) \nabla \chi \|_2^2
\lesssim \|\chi u(t)\|_2^{\frac{2(s-1)}s}\|u(t)\|_{H^s_x}^{\frac2s} + \eta  \notag\\
&\lesssim_u \eta^{\frac{s-1}s} + \eta. \label{M3'}
\end{align}

Finally, we are left to consider \eqref{M4}.  Using the same $\chi$ as above together with the Gagliardo--Nirenberg inequality
and \eqref{sol small mass},
\begin{align}
|\eqref{M4}| &\lesssim\|\chi u(t)\|_{\frac{2(d+2)}d}^{\frac{2(d+2)}d}
\lesssim \|\chi u(t)\|_2^{\frac 4d} \|\nabla (\chi u(t))\|_2^2 \lesssim_u \eta^{\frac 2d}.\label{M4'}
\end{align}

Combining \eqref{M2'}, \eqref{M3'}, and \eqref{M4'} and choosing $\eta$ sufficiently small depending on $u$ and $R$ sufficiently
large depending on $u$ and $\eta$, we obtain
$$
|\eqref{M2}| + |\eqref{M3}| + |\eqref{M4}| \leq \tfrac 1{100} E(u).
$$
This completes the proof of the proposition for the reasons explained in the third paragraph.
\end{proof}

\end{document}